\thanks{CAMS, EHESS, 54 bd Raspail, Paris 75006, France ({\tt mbeceanu@ehess.fr}). The author is partly supported by the ANR grant PREFERED. This work stems from the author's Ph.D.\ thesis at the University of Chicago.}
\newtheorem{theorem}{Theorem}[section]
\newtheorem{lemma}[theorem]{Lemma}
\newtheorem{proposition}[theorem]{Proposition}
\newtheorem{corollary}[theorem]{Corollary}
\theoremstyle{remark}
\newtheorem{observation}[theorem]{Remark}
\newtheorem{definition}{Definition}[section]
\newcommand{\set}{\mathbb}
\newcommand{\dl}{\nabla}
\newcommand{\mc}{\mathcal}
\newcommand{\be}{\begin{equation}}
\newcommand{\ee}{\end{equation}}
\newcommand{\bee}{\begin{align}}
\newcommand{\eee}{\end{align}}
\newcommand{\ba}{\begin{array}}
\newcommand{\ea}{\end{array}}
\newcommand{\bpm}{\begin{pmatrix}}
\newcommand{\epm}{\end{pmatrix}}
\newcommand{\lb}{\label}
\DeclareMathOperator{\sgn}{sgn}
\DeclareMathOperator{\supp}{supp}
\DeclareMathOperator{\Ran}{Ran}
\DeclareMathOperator{\Ker}{Ker}
\DeclareMathOperator{\Rere}{Re}
\DeclareMathOperator{\Imim}{Im}
\DeclareMathOperator{\Dom}{Dom}
\DeclareMathOperator*{\esssup}{ess\; sup}
\newcommand{\ov}{\overline}
\newcommand{\dd}{{\,}{d}}
\renewcommand{\Re}{\Rere}
\renewcommand{\Im}{\Imim}
\title
{New Estimates for a Time-Dependent Schr\"{o}dinger Equation}
\author{Marius Beceanu}
\date{September 2009}
\begin{document}
\maketitle
\numberwithin{equation}{section}
\begin{abstract}
This paper establishes new estimates for linear Schr\"{o}\-din\-ger equations in $\set R^3$ with time-dependent potentials. Some of the results are new even in the time-independent case and all are shown to hold for potentials in scaling-critical, translation-invariant spaces.

The proof of the time-independent results uses a novel method based on an abstract version of Wiener's Theorem.

\end{abstract}


\section{Introduction}
\subsection{Overview}
Consider the linear Schr\"{o}dinger equation in $\set R^3$
\be\lb{1.1}
i \partial_t Z + \mc H Z = F,\ Z(0) \text{ given},
\ee
where
\be\lb{eq_3.2}
\mc H = \mc H_0 + V = -\Delta + V,
\ee
where $V$ may be real or complex-valued, in the scalar case and
\be
\mc H = \mc H_0 + V = \bpm \Delta-\mu & 0 \\ 0 & -\Delta+\mu \epm + \bpm W_1 & W_2 \\ -\ov W_2 & -W_1 \epm,\ \mu>0
\lb{3.2}
\ee
in the matrix nonselfadjoint case. $W_1$ is always taken to be real-valued, while $W_2$ may also be complex-valued. We treat all cases in a unified manner.

Let $R_0(\lambda) = (\mc H_0 - \lambda)^{-1}$ be the resolvent of the unperturbed Hamiltonian,
\be\lb{eq_3.55}
R_0(\lambda^2)(x, y) = \frac 1 {4\pi} \frac {e^{i \lambda |x-y|}}{|x-y|}
\ee
in the scalar case (\ref{eq_3.2}) and
\be\lb{eq_3.56}
R_0(\lambda^2+\mu)(x, y) = \frac 1 {4\pi} \bpm -\frac {e^{-\sqrt{\lambda^2+2\mu} |x-y|}}{|x-y|} & 0 \\ 0 & \frac {e^{i \lambda |x-y|}}{|x-y|} \epm
\ee
in the matrix case (\ref{3.2}). For a multiplicative decomposition $V=V_1 V_2$, such that $V_2 R_0(\lambda) V_1 \in \mc L(L^2, L^2)$ (bounded from $L^2$ to itself), the exceptional set $\mc E$ is defined as the set of $\lambda \in \set C$ such that $(I + V_2 R_0(\lambda) V_1)^{-1} \not \in \mc L(L^2, L^2)$.

Throughout this paper we make the simplifying assumption that $\mc E$ is disjoint from the spectrum of $\mc H_0$. In $\set R^3$, this assumption holds generically. The opposite situation requires separate treatment.


In proving our estimates for the solution $Z$ of (\ref{1.1}), we use an abstract version of Wiener's Theorem, which also presents independent interest.
\begin{theorem}\lb{thm_1.6}
Let $H$ be a Hilbert space and $K = \mc L(H, M_t H)$ be the algebra of bounded operators from $H$ to $M_t H$, where $M_t H$ is the space of $H$-valued Borel measures on $\set R$ of finite mass, see (\ref{3.11}).

If $A \in K$ is invertible then $\widehat A(\lambda)$ is invertible for every $\lambda$. Conversely, assume $\widehat A(\lambda)$ is invertible for each $\lambda$, $A = I + L$, and
\be
\lim_{\epsilon \to 0} \|L(\cdot + \epsilon) - L\|_K = 0,\ \lim_{R \to \infty}\|(1-\chi(t/R)) L(t)\|_K =0
\ee
Then $A$ is invertible in $K$.
\end{theorem}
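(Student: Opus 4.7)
The forward direction is a routine check: for each $\lambda \in \mathbb R$, the map $A \mapsto \widehat A(\lambda)$ is a unital algebra homomorphism $K \to \mathcal L(H)$ (convolution of operator-valued measures goes to composition of operators in $\mathcal L(H)$, and the unit $I \cdot \delta_0$ of $K$ goes to $I$), so any convolution inverse $B \in K$ of $A$ automatically gives $\widehat A(\lambda)\widehat B(\lambda) = \widehat B(\lambda)\widehat A(\lambda) = I$ in $\mathcal L(H)$.

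For the converse, my plan is to adapt the classical Wiener argument of local inversion followed by patching to this operator-valued, noncommutative setting. First I would extract from the two hypotheses on $L$ the regularity of $\widehat L$: translation continuity of $L$ in $K$ yields norm continuity of $\lambda \mapsto \widehat L(\lambda)$, while the tail-decay hypothesis yields an operator-valued Riemann--Lebesgue statement $\|\widehat L(\lambda)\| \to 0$ as $|\lambda| \to \infty$. Combined with the assumed pointwise invertibility of $\widehat A = I + \widehat L$, these make $\lambda \mapsto \widehat A(\lambda)^{-1}$ continuous on the one-point compactification $\mathbb R \cup \{\infty\}$, and hence uniformly bounded.

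Next I would construct a local convolution inverse of $A$ in a neighborhood of each point of $\mathbb R \cup \{\infty\}$. Near $\infty$: for $|\lambda|$ large, $\|\widehat L(\lambda)\| < 1/2$, so pointwise $\widehat A(\lambda)^{-1}$ is given by a Neumann series in $\widehat L(\lambda)$; the corresponding local element of $K$ is obtained by convolving a finite Neumann truncation of $L$ with a smooth high-pass frequency cutoff (which, being the inverse Fourier transform of a scalar Schwartz function times $I$, lies in $K$), with the cutoff forcing the effective operator norm of the truncated tail to be small. Near a finite $\lambda_0$: setting $B_{\lambda_0} = \widehat A(\lambda_0)^{-1} \in \mathcal L(H)$, the element $B_{\lambda_0} \cdot \delta_0 \in K$ has constant Fourier transform $B_{\lambda_0}$, so $(B_{\lambda_0}\,\delta_0) * A$ has Fourier transform $I + B_{\lambda_0}\bigl(\widehat L(\lambda) - \widehat L(\lambda_0)\bigr)$, which is arbitrarily close to $I$ in operator norm on a small neighborhood of $\lambda_0$ by continuity of $\widehat L$; the same Neumann-series-plus-frequency-cutoff construction then produces a local inverse in $K$.

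The main obstacle is the gluing step. I would choose a finite subcover of $\mathbb R \cup \{\infty\}$ by the neighborhoods above (using compactness of the one-point compactification) together with a subordinate smooth partition of unity $\{\phi_j\}$; each scalar cutoff $\phi_j(\lambda)\cdot I$ lifts to $K$ via its inverse Fourier transform $\phi_j^\vee(t) \cdot I$. The candidate global inverse $B \in K$ is assembled as a sum of convolutions of these $\phi_j$-multipliers with the local inverses constructed above. The genuinely subtle point --- where I expect the bulk of the technical work to live, and where the translation-continuity and tail-decay hypotheses on $L$ are used a second time --- is to verify that $A * B = B * A = I$ in $K$. This identity is checked at the Fourier-transform level, where it reduces to a pointwise identity arising from the partition of unity, and then lifted back to $K$ by injectivity of the Fourier transform together with the fact that every building block of $B$ has been designed to lie in $K$ with controlled norms, so the assembly converges in $K$.
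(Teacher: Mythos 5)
Your outline (forward direction via the Fourier homomorphism; converse via local inversion near each point of $\mathbb R \cup \{\infty\}$ followed by a smooth partition of unity) matches the paper's strategy, and the idea of pre-composing with $B_{\lambda_0}\,\delta_0$ so that the perturbation $\widehat M(\lambda) := B_{\lambda_0}\bigl(\widehat L(\lambda)-\widehat L(\lambda_0)\bigr)$ vanishes at $\lambda_0$ is a clean alternative to the paper's $A \mapsto A*A^*$ normalization plus the $B^n$/$n$-th-root trick. However, the heart of your local construction has a genuine gap. The Neumann series you want to run lives in $K$, so you need the frequency-localized piece of $M$ to be small in the \emph{$K$-norm}, and that does \emph{not} follow from $\|\widehat M(\lambda)\|_{\mathcal L(H)}$ being small on a neighborhood of $\lambda_0$: the Fourier transform only controls the weak side of $K$, and in general $\|\chi_\epsilon * M\|_K$ is as large as $\|\chi_\epsilon\|_{L^1}\,\|M\|_K$, a fixed constant, no matter how small $\widehat M$ is on $\operatorname{supp}\chi(\cdot/\epsilon)$. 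This is exactly where the two hypotheses on $L$ must enter \emph{directly at the $K$-norm level}, via the approximation lemma that for a kernel $B$ decaying at infinity $\|\chi_\epsilon * B - \chi_\epsilon(t)\,\widehat B(\lambda_0)\|_K \to 0$ as $\epsilon\to 0$; combined with $\widehat M(\lambda_0)=0$ this gives $\|\chi_\epsilon*M\|_K \to 0$. Your phrase ``the cutoff forcing the effective operator norm of the truncated tail to be small'' conflates these two norms and doesn't supply this step. The same problem recurs near $\infty$: there one needs $\|(I-\chi_R)*L\|_K \to 0$, which is supplied by the translation-continuity (equicontinuity) hypothesis and a good-kernel argument, not by $\widehat L(\lambda)\to 0$.

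A secondary, but telling, slip: you have the Fourier-side consequences of the two hypotheses swapped. Decay of $L$ at infinity is what gives norm-continuity of $\lambda\mapsto\widehat L(\lambda)$ (split $|t|\le R$ and $|t|>R$), while translation-continuity of $L$ is what gives the operator Riemann--Lebesgue decay $\|\widehat L(\lambda)\|\to 0$ (since $\widehat{L(\cdot+\epsilon)-L}(\lambda) = (e^{i\epsilon\lambda}-1)\widehat L(\lambda)$). Relatedly, you flag the gluing step as the place ``where the bulk of the technical work lives,'' but once the local inverses are constructed in $K$ the patching with $\phi_j^\vee * I$ is routine (Fourier transform of the assembled $B$ is $\widehat A^{-1}$ everywhere, and injectivity of the Fourier transform on $K$ finishes). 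The genuinely delicate step is the local one, for precisely the reason above.
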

\noindent Moreover, if $A$ is supported on $[0, \infty)$, then so is $A^{-1}$, by Lemma~\ref{lemma8}.

Another helpful result is the following: 
\begin{proposition}\lb{prop_21} Let $\mc H_0$ be as in (\ref{eq_3.2}) or (\ref{3.2}).
Then
\be\lb{eqn_3.39}
\int_{\set R} \|e^{it \mc H_0} f\|_{L^{6, \infty}} \dd t \leq C \|f\|_{L^{6/5, 1}}.
\ee
\end{proposition}
We actually prove the stronger inequality
\be\lb{eq_3.39}
\sum_{n \in \set Z} 2^n \sup_{t \in [2^n, 2^{n+1})} \|e^{it \mc H_0} f\|_{L^{6, \infty}} \leq C \|f\|_{L^{6/5, 1}}.
\ee
We use Proposition \ref{prop_21} in the context of Theorem \ref{thm_1.6} --- to show, for specific weights $V_1$ and $V_2$, that $V_2 e^{it \mc H_0} V_1$ is an operator-valued measure of finite mass.

Finally, let $L^{3/2, \infty}_0$ be the weak-$L^{3/2}$ closure of the set of bounded compactly supported functions. The following is the main result for time-independent potentials:
\begin{theorem}\lb{theorem_26} Let $Z$ be a solution of the linear Schr\"{o}\-din\-ger equation (\ref{1.1})
$$
i \partial_t Z + \mc H Z = F,\ Z(0) \text{ given}.
$$
Assume that $\mc H = \mc H_0 + V$, $V$ is as in (\ref{eq_3.2}) or (\ref{3.2}), and that no exceptional values of $\mc H$ are contained in $\sigma(\mc H_0)$. Then Strichartz estimates hold: if $V \in L^{3/2, \infty}_0$ then
\be\lb{1.45}
\|P_c Z\|_{L^{\infty}_t L^2_x \cap L^2_t L^{6, 2}_x} \leq C \Big(\|Z(0)\|_2 + \|F\|_{L^1_t L^2_x + L^2_t L^{6/5, 2}_x}\Big)
\ee
and if $V \in L^{3/2, 1}$ then
\be\lb{1.46}
\|P_c Z\|_{L^1_t L^{6, \infty}_x} \leq C \Big(\|Z(0)\|_{L^{6/5, 1}} + \|F\|_{L^1_t L^{6/5, 1}_x}\Big).
\ee
\end{theorem}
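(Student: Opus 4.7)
The plan is to treat $V$ as a perturbation of $\mathcal{H}_0$, recast Duhamel's formula as an operator equation in a time-convolution algebra, and invert the perturbation operator via the abstract Wiener Theorem \ref{thm_1.6}. First I would record the free versions of the estimates (\ref{1.45}) and (\ref{1.46}) for $e^{-it\mathcal{H}_0}$: the $L^\infty_t L^2_x \cap L^2_t L^{6,2}_x$ bound is the Keel--Tao endpoint Strichartz estimate with a Lorentz refinement, while the $L^1_t L^{6,\infty}_x$ bound follows from the pointwise dispersive estimate $\|e^{-it\mathcal{H}_0}\|_{L^{6/5,1}\to L^{6,\infty}}\lesssim |t|^{-1}$ combined with Hardy--Littlewood--Sobolev in the time variable. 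In the matrix case the ``elliptic'' entry of (\ref{eq_3.56}) decays exponentially in $t$ and is easier to handle.

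Next I would symmetrically factorize $V = V_1 V_2$ with $|V_1|=|V_2|=|V|^{1/2}$, so that $V_1,V_2 \in L^{3,\infty}$ (respectively $L^{3,2}$ under the stronger hypothesis). Applying $V_2$ on the left of the Duhamel formula
\[
Z(t) = e^{-it\mathcal{H}_0}Z(0) - i\int_0^t e^{-i(t-s)\mathcal{H}_0}\bigl(VZ(s) + F(s)\bigr)\,ds
\]
yields the operator equation $(I + T)(V_2 Z) = G$, where $T$ is causal time-convolution with kernel $-iV_2 e^{-it\mathcal{H}_0}V_1\mathbf{1}_{t>0}$ and $G$ absorbs the inhomogeneous free terms. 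A $TT^\ast$ argument based on the free Strichartz estimate gives the Kato-smoothing bound $\|V_2 e^{-it\mathcal{H}_0}V_1\|_{L^2_x\to L^2_{t,x}}<\infty$, placing $T$ in the Wiener algebra $K = \mathcal{L}(L^2_x, M_t L^2_x)$ of Theorem \ref{thm_1.6}.

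To invert $I+T$ in $K$ I would verify both hypotheses of the abstract Wiener theorem. The Fourier transform in $t$ is $\widehat{T}(\lambda) = V_2 R_0(\lambda+i0)V_1$; its pointwise invertibility for $\lambda \in \sigma(\mathcal{H}_0)$ is exactly the assumption that $\mathcal{H}$ has no embedded exceptional values, while off the spectrum $R_0(\lambda)$ is bounded and invertibility reduces to the same spectral hypothesis via analytic Fredholm theory. The translation continuity $\|T(\cdot+\epsilon)-T\|_K\to 0$ and tail decay $\|(1-\chi(t/R))T(t)\|_K\to 0$ are first checked when $V_1,V_2$ are bounded and compactly supported — in which case they follow from strong continuity of $e^{-it\mathcal{H}_0}$ on $L^2_x$ and from pointwise dispersive decay, respectively — and then extended to $V \in L^{3/2,\infty}_0$ by density. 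I expect this density/approximation step to be the main technical obstacle, and it is precisely why the subscript-$0$ class is used instead of the full weak-$L^{3/2}$; the matrix structure of (\ref{3.2}) adds a small bookkeeping layer because the two diagonal pieces of $R_0$ in (\ref{eq_3.56}) must be treated separately.

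Once $I+T$ has been inverted in $K$, substituting back into Duhamel and composing with a free evolution recovers $Z$; after projecting onto the continuous spectral subspace by the spectrally defined Riesz projection $P_c$ (which is available even in the non-selfadjoint matrix case under the exceptional-value hypothesis), the estimate (\ref{1.45}) follows. For (\ref{1.46}) under $V\in L^{3/2,1}$, I would run the identical scheme with the Hilbert space $L^2_x$ replaced by the pair $(L^{6/5,1}_x, L^{6,\infty}_x)$: the Kato-smoothing bound is replaced by the endpoint estimate $\|V_2 e^{-it\mathcal{H}_0}V_1\|_{L^{6/5,1}_x\to L^1_t L^{6,\infty}_x}$, which requires $V_1,V_2\in L^{3,2}$ (hence $V\in L^{3/2,1}$), while the Wiener-theorem inversion step is essentially unchanged because the Fourier transform and spectral hypothesis are the same.
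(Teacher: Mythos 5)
Your proposal has a genuine gap at the inversion step. You propose to invert $I + T$ where $\widehat{T}(\lambda) = V_2 R_0(\lambda+i0)V_1$ and $V = V_1 V_2$, and you assert that invertibility of $I + \widehat{T}(\lambda)$ ``reduces to the same spectral hypothesis'' off $\sigma(\mathcal{H}_0)$. That is false: the hypothesis of the theorem only excludes exceptional values \emph{embedded in} $\sigma(\mathcal{H}_0)$; eigenvalues of $\mathcal{H}$ lying off $\sigma(\mathcal{H}_0)$ are allowed (indeed this is why the conclusion is stated for $P_c Z$ rather than $Z$). At each such eigenvalue $\zeta$, $I + V_2 R_0(\zeta)V_1$ fails to be invertible, so the Fourier-pointwise hypothesis of the abstract Wiener theorem is not satisfied for your $T$, and the theorem does not apply. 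Merely ``projecting by $P_c$ at the end'' does not fix this, because the projection has to be built into the perturbation you iterate, not appended after the fact.

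The paper resolves this by using a different factorization. Writing $P_c \mathcal{H} = \mathcal{H}_0 + (V - P_p \mathcal{H})$, Lemma~\ref{lemma32} produces a decomposition $V - P_p\mathcal{H} = F_1(V)F_2(V)$ with $F_1(V), F_2(V)^\ast \in \mathcal{L}(L^2, L^{6/5,2})$, and the crucial identity is
\[
\bigl(I + F_2(V)\,R_0(\lambda)\,F_1(V)\bigr)^{-1} = I - F_2(V)\,R_V(\lambda)\,P_c\,F_1(V),
\]
whose right-hand side is bounded uniformly in $\lambda$ throughout the closed lower half-plane precisely because $R_V(\lambda)P_c$ stays bounded there (the eigenvalue poles are killed by $P_c$). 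This is what makes the Wiener-theorem hypothesis checkable, and it is the idea your scheme is missing. Two further deviations from the paper worth noting: (i) the estimate~(\ref{1.45}) is proven in the paper by a direct Plancherel/contour-shift argument on $\langle R_V(\lambda - i0)P_c\widehat{F},\widehat{G}\rangle$, not via the Wiener theorem — the Wiener algebra $K=\mathcal{L}(L^2,M_tL^2)$ naturally produces $L^1_t$ control and is the right tool for~(\ref{1.46}), not~(\ref{1.45}); and (ii) for~(\ref{1.46}) the paper keeps the Hilbert space $H=L^2_x$ fixed and lets $F_1(V), F_2(V)$ carry the $L^{6/5,1}$/$L^{6,1}$ mapping properties, rather than replacing $H$ by the non-Hilbert pair $(L^{6/5,1},L^{6,\infty})$ as you suggest, which is incompatible with the Hilbert-space hypothesis of Theorem~\ref{thm_1.6}.
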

Here $P_c$ is the projection on the continuous spectrum of $\mc H$.
%

Equation (\ref{1.1}) is invariant under rescaling: replacing $Z(t, x)$ in this equation by $Z(\alpha^2 t, \alpha x)$, $V(x)$ by $\alpha^2 V(\alpha x)$, and $F(t, x)$ by $F(\alpha^2 t, \alpha x)$, the equation remains valid. It is natural to study (\ref{1.1}) under scaling-invariant norms.


Our results are scaling-invariant in that $V$ appears with scaling-invariant norms and in that the constants in (\ref{eqn_3.39}), (\ref{1.45}), and (\ref{1.46}) remain the same after rescaling. 

Using Lorentz spaces instead of the more usual Sobolev spaces is essential in the statements and proofs of (\ref{eqn_3.39}) and (\ref{1.46}). It also allows an improvement in (\ref{1.45}) --- from $L^{3/2}$, proved in \cite{gol}, to weak-$L^{3/2}$, the weakest space of potentials for which Strichartz estimates are known so far. Estimate (\ref{1.46}) is entirely new.


Note that if $V$ is scalar, real-valued, and in $L^{3/2}$, then it is enough to assume that zero is not an eigenvalue or a resonance of $\mc H$. This follows from \cite{ionjer}. Proposition \ref{prop28} summarizes the known spectral properties of~$\mc H$ if $V \in L^{3/2, \infty}_0$.

Overall, Lorentz spaces provide a convenient unified framework for our methods, some of which are based on real interpolation (see the Appendices).


\begin{observation}
For $Z(0)=0$, the dual of (\ref{1.46}) is
\be
\|P_c Z\|_{L^{\infty}_t L^{6, \infty}_x} \leq C \|F\|_{L^{\infty}_t L^{6/5, 1}_x}.
\ee
Real interpolation between (\ref{1.45}) and (\ref{1.46}) produces
\be
\|P_c Z\|_{L^p_t L^{6, p}_x} \leq C \big(\|Z(0)\|_{L^{q, p}} + \|F\|_{L^1_t L^{q, p}_x + L^p_t L^{6/5, p}_x}\big),
\ee
where $1 \leq p \leq 2$ and $3/(2q) = 1/p + 1/4$.
\end{observation}

In particular, this estimate and its dual have a simple form for $p=6/5$:
\begin{proposition}\lb{cor1.5} For $Z(0)=0$ and $V \in L^{3/2}$, provided zero is not a resonance or eigenvalue for $\mc H = \mc H_0 + V$,
\be\lb{eqn1.12}
\|P_c Z\|_{L^{6/5}_t L^{6, 6/5}_x} \leq C \|F\|_{L^{6/5}_{t, x}};\ \|P_c Z\|_{L^6_{t, x}} \leq C \|F\|_{L^6_t L^{6/5, 6}_{t, x}}.
\ee
\end{proposition}

Other estimates, such as $t^{-3/2}$ decay estimates or wave operator estimates, will be the subject of separate papers, such as \cite{bego} or \cite{bec3}. These results make use of Wiener's Theorem, Theorem \ref{thm_1.6}, in an even more general form.

\subsection{Time-dependent potentials}
We turn to time-dependent potentials. The difference is that, in this case, the Fourier transform of a kernel $T(t, s)$ which is not invariant under time translation is no longer a multiplier $\widehat T(\lambda):H \to H$ for each $\lambda$; it is a family of non-local operators instead. Such a generalization was studied by Howland \cite{how}.

We shall not follow this direction here; instead, we look only at perturbations of operators that are invariant under time translation, whose Fourier transform is then a perturbation of a multiplier operator.

Our results in the time-dependent case are, to some extent, independent from those for time-independent potentials. If Strichartz estimates or (\ref{1.46}) hold, no matter how they were obtained, then we can extend them to the case of time-dependent potentials.

Theorem \ref{theorem_26} implies the following easy generalization of itself:
\begin{corollary}\lb{cor_1.5}
Let $Z$ be a solution of
\be\lb{1.12}
i \partial_t Z + \mc H Z + \tilde V(t, x) P_c Z= F,\ Z(0) \text{ given}.
\ee
Let $\mc H = \mc H_0 + V$ be such that no exceptional values of $\mc H$ are contained in $\sigma(\mc H_0)$. If $V \in L^{3/2, \infty}_0$ and if $\|\tilde V\|_{L^{\infty}_t L^{3/2, \infty}_x}$ is sufficiently small, then
\be\lb{1.13}
\|P_c Z\|_{L^{\infty}_t L^2_x \cap L^2_t L^{6, 2}_x} \leq C \Big(\|Z(0)\|_2 + \|F\|_{L^1_t L^2_x + L^2_t L^{6/5, 2}_x} \Big).
\ee
If $V \in L^{3/2, 1}$ and if $\|\tilde V\|_{L^{\infty}_t L^{3/2, 1}_x}$ is sufficiently small, then
\be\lb{1.14}
\|P_c Z\|_{L^1_t L^{6, \infty}_x} \leq C \Big(\|Z(0)\|_{L^{6/5, 1}} + \|F\|_{L^1_t L^{6/5, 1}_x} \Big).
\ee
\end{corollary}
In other words, we can add a small time-dependent potential term, which depends in an arbitrary manner on both $x$ and $t$.

If instead of $\tilde V(t, x) P_c Z$ we have $\tilde V(t, x) Z$, this term may interact with the point spectrum, so we need to control $P_p Z$ in the same norm: for example,
\be
\|P_c Z\|_{L^1_t L^{6, \infty}_x} \leq C \Big(\|Z(0)\|_{L^{6/5, 1}} + \|F\|_{L^1_t L^{6/5, 1}_x} + \|P_p Z\|_{L^1_t L^{6, \infty}_x} \Big).
\ee

The situation is different for large time-dependent potentials, which may destroy dispersive estimates. However, we shall study a particular kind of time-dependent potentials, for which dispersive estimates still hold.

Consider the following manifold of unitary transformations $U: \set R^4 \times SO(3) \to \mc L(L^2, L^2)$, given by a combination of translations, rotations, and change of complex phase:
\be\lb{1.20}
U(D, A, \Omega) = \Omega e^{D \dl + i A \sigma_3}
\ee
in the matrix case and 
\be\lb{1.21}
U(D, A, \Omega) = \Omega e^{D \dl + i A}
\ee
in the scalar case. Here $D$ represents the translation, $\Omega$ the rotation, and $A$ the change of complex phase.

Within this manifold, we consider a one-parameter family of transformations $U(t):=U(\pi(t))$, governed by a parameter path $\pi(t)$:
\be\lb{1.26}
\pi:\set R \to \set R^4 \times SO(3),\ \pi(t) = (D(t), A(t), \Omega(t)),
\ee
where $D(t) \in \set R^3$, $A(t) \in \set R$, and $\Omega(t) \in SO(3)$. Thus,
\be
U(t) = \Omega(t) e^{D(t) \dl + i A(t) \sigma_3},\ \text{respectively } U(t) = \Omega(t) e^{D(t) \dl + i A(t)}.
\ee
In the sequel, we assume that the derivative of $\pi$ is essentially bounded: $\|\pi'\|_{\infty} < \infty$. In particular, $SO(3)$ is a Lie group, endowed with a canonical Riemannian metric; we use it to measure $\|\Omega'\|_{\infty}$.

We plug this family of unitary transformations into the equation (\ref{1.1}) as follows, obtaining a time-dependent potential:
\be\lb{3.157}
i \partial_t R(t) + (\mc H_0 + U(t)^{-1} V U(t)) R(t) = F(t),\ R(0) \text{ given}.
\ee
The Hamiltonian at time $t$ is a conjugate of $\mc H$:
\be
\mc H_0 + U(t)^{-1} V U(t) = U(t)^{-1} \mc H U(t) = U(t)^{-1} (\mc H_0 + V) U(t),
\ee
since $\mc H_0$ and $U$ commute.

The change of phase $e^{iA}$ is trivial in the scalar case, since it commutes with $V$, but $e^{iA\sigma_3}$ is nontrivial in the matrix case, since $e^{iA\sigma_3} V \ne V e^{iA\sigma_3}$.

If $U$ is of the form (\ref{1.20}), then (\ref{3.157}) becomes
\be\lb{129}
i \partial_t R(t) + (\mc H_0 + V(t, x)) R(t) = F(t),\ R(0) \text{ given},
\ee
where
\be\lb{130}
V(t, x) = \bpm W_1(\Omega(t) x- \Omega(t) D(t)) & e^{i A(t)} W_2(\Omega(t) x- \Omega(t) D(t)) \\ -e^{-iA(t)} \ov W_2(\Omega(t) x- \Omega(t) D(t)) & -W_1(\Omega(t) x - \Omega(t)D(t)) \epm.
\ee
The potential $V(x, t)$ has the same shape at all times $t$, but can rotate or change position and complex phase.

In the scalar case, the equation looks the same, but $V(t, x)$ is given by
\be\lb{131}
V(t, x) = V(\Omega(t) x - \Omega(t)D(t)),
\ee
so $A(t)$ does not even enter the equation.

Assume that $\pi$ is continuous and a.e.\ differentiable. Then, denote $Z(t) = U(t) R(t)$ and rewrite the equation in the new variable $Z$:
\be\lb{3.158}
i \partial_t Z(t) - i \partial_t U(t) U(t)^{-1} Z(t) + \mc H_0 Z(t) + V Z(t) = U(t) F(t),\ Z(0) = U(0) R(0).
\ee

In case $U$ is given by (\ref{1.21}) and $\Omega(t)=I$, (\ref{3.158}) becomes
\be\lb{1.30}
i \partial_t Z - i D'(t) \dl Z + A'(t) \sigma_3 Z + \mc H Z = U F,\ Z(0) \text{ given}.
\ee
In this formulation the potential is constant, plus the higher-order perturbation  $- i D'(t) \dl + A'(t) \sigma_3$.

More generally, we can also consider a manifold of transformations $U(\pi)$, a one-parameter path $U(t):=U(\pi(t))$ within it, and construct a time-dependent potential like above. The subsequent theorem is formulated abstractly, for such a general situation, but we always keep in mind the concrete cases (\ref{1.20}) and (\ref{1.21}).


The main abstract result in the time-de\-pen\-dent setting is the following:
\begin{theorem}\lb{theorem_13}
Consider equation (\ref{3.157}), for $\mc H = \mc H_0 + V$ as in (\ref{eq_3.2}) or (\ref{3.2}) and $V$ in $L^{3/2, \infty}_0$, not necessarily real-valued:
$$
i \partial_t R(t) + (\mc H_0 + U(t)^{-1} V U(t)) R(t) = F(t),\ R(0) \text{ given}.
$$
Let $P_c$ be the continuous spectrum projection of $\mc H$ and
\be
P_c(t) = U(t)^{-1} P_c U(t),\ P_p(t) = U(t)^{-1} P_p U(t) = I - P_c(t).
\ee
Assume that $U(t)$, $t \in \set R$, is a family of maps determined by an a.e.\  differentiable parameter path $\pi(t)$, with the following properties:
\begin{enumerate}
\item[P1] $U(t)$ and $U(t)^{-1}$ are uniformly $L^p$-bounded maps, for $1 \leq p \leq\infty$.
\item[P2] For every $t \in \set R$, $U(t)$ commutes with $\mc H_0$.
\item[P3] For some $N$ and for almost every $\tau \in \set R$
\be\lb{3.163}\begin{aligned}
\lim_{\substack{\|\pi'\|_{\infty} \to 0}} \sup_{t-s=\tau} \Big\|\langle x \rangle^{-N} \big(U(t) U(s)^{-1} e^{i(t-s) \mc H_0} - e^{i(t-s) \mc H_0}\big) \langle x \rangle^{-N}\Big\|_{2 \to 2} = 0.
\end{aligned}\ee
\item[P4] For any eigenfunction $f$ of $\mc H$ or $\mc H^*$, $\|\partial_t U(t) U^{-1}(t) f\|_{L^{6/5, 2}} \leq C \|\pi'\|_{\infty}$.
\end{enumerate}

Assume that $\|\pi'\|_{\infty}$ is sufficiently small (in a manner that depends on $V$) and there are no exceptional values of $\mc H$ embedded in $\sigma(\mc H_0)$. Then, one has
\be
\|P_c(t) R(t)\|_{L^{\infty}_t L^2_x \cap L^2_t L^{6, 2}_x} \leq C \Big(\|R(0)\|_2 + \|F\|_{L^1_t L^2_x + L^2_t L^{6/5, 2}_x} + \|P_p(t) R(t)\|_{L^2_t L^{6, 2}_x}\Big).
\ee
Further assume that $V \in L^{3/2, 1}$ and P4 holds with $L^{6/5, 1}$.
Then
\be\lb{2.157}
\|P_c(t) R(t)\|_{L^1_t L^{6, \infty}_x} \leq C \Big(\|R(0)\|_{L^{6/5, 1}} + \|F\|_{L^1_t L^{6/5, 1}_x} + \|P_p(t) R(t)\|_{L^1_t L^{6, \infty}_x}\Big).
\ee
\end{theorem}

By Lemma \ref{lem_32}, we show that $U(t)$ given by (\ref{1.20}) or (\ref{1.21}) and (\ref{1.26}) has properties P1--P4, so Theorem \ref{theorem_13} applies. In particular, we obtain
\begin{corollary}\lb{cor_1.7}
Consider a solution of (\ref{129}), where $V(x, t)$ is in the form (\ref{130}) or (\ref{131}).
Assume that $V \in L^{3/2, \infty}_0$, $\mc H$ has no exceptional values in $(-\infty, -\mu] \cup [\mu, \infty)$, and that $\|\Omega'\|_{L^{\infty}_t}$, $\|D'\|_{L^{\infty}_t}$, and $\|A'\|_{L^{\infty}_t}$ are sufficiently small, in a manner that depends on $V$. Then
\be\begin{aligned}
\|P_c(t) R\|_{L^{\infty}_t L^2_x \cap L^2_t L^{6, 2}_x} \leq C \Big(\|R(0)\|_{L^2} + \|F\|_{L^1_t L^2_x + L^2_t L^{6/5, 2}_x} + \|P_p(t) R\|_{L^2_t L^{6, 2}_x}\Big).
\end{aligned}\ee
If $V \in L^{3/2, 1}$, then
\be
\|P_c(t) R\|_{L^1_t L^{6, \infty}_x} \leq C \Big(\|R(0)\|_{L^{6/5, 1}} + \|F\|_{L^1_t L^{6/5, 1}_x} + \|P_p(t) R\|_{L^1_t L^{6, \infty}_x}\Big).
\ee
\end{corollary}
We also formulate the analogous conclusion for (\ref{1.30}):
\begin{corollary}\lb{cor_1.8}
Consider a solution of (\ref{1.30}).
Assume that $V \in L^{3/2, \infty}_0$, that $\mc H$ has no exceptional values in $(-\infty, -\mu] \cup [\mu, \infty)$, and that $\|D'\|_{L^{\infty}_t}$ and $\|A'\|_{L^{\infty}_t}$ are sufficiently small, in a manner that depends on $V$. Then
\be\begin{aligned}
\|P_c Z\|_{L^{\infty}_t L^2_x \cap L^2_t L^{6, 2}_x} \leq C \Big(\|Z(0)\|_{L^2} + \|F\|_{L^1_t L^2_x + L^2_t L^{6/5, 2}_x} + \|P_p Z\|_{L^2_t L^{6, 2}_x}\Big).
\end{aligned}\ee
If $V \in L^{3/2, 1}$, then
\be
\|P_c Z\|_{L^1_t L^{6, \infty}_x} \leq C \Big(\|Z(0)\|_{L^{6/5, 1}} + \|F\|_{L^1_t L^{6/5, 1}_x} + \|P_p Z\|_{L^1_t L^{6, \infty}_x}\Big).
\ee
\end{corollary}
We omit the proofs of Corollary \ref{cor_1.7} and Corollary \ref{cor_1.8}, since they are straightforward applications of Theorem \ref{theorem_13}.

One can find numerous other families satisfying properties P1--P4 of Theorem \ref{theorem_13}. As an example, let $\tilde U(t) = e^{i(\int_0^t \beta(\tau) \dd \tau) |\dl|^s)}$, $0 \leq s < 1/4$. It then suffices to see that $(e^{i|\xi|^s})^{\wedge} \in L^1$ on $\set R^3$.

\subsection{History of the problem}

{\emph{The scalar selfadjoint case.}}
The study of decay estimates for Schr\"{o}dinger's equation with a potential has a long history. In the 1970's, Rauch \cite{rau} and Jensen--Kato \cite{jeka} studied the local decay of solutions to (\ref{1.1}) in weighted $L^2$ spaces, under suitable conditions on the potential $V$ and taking into account threshold eigenvalues or resonances. Global decay of solutions was later established by Journ\'{e}--Soffer--Sogge, \cite{jss}, who proved in $\set R^n$ that
\be
\|e^{it(-\Delta+V)}P_c f\|_{L^{\infty}} \leq C |t|^{-n/2} \|f\|_{L^1},
\ee
when $n \geq 3$, zero is neither an eigenvalue, nor a resonance, and, roughly, $|V(x)| \leq C |x|^{-4-n}$ and $\widehat V \in L^1$.

Keel--Tao \cite{tao} obtained endpoint Stric\-hartz estimates for the free Schr\"{o}\-din\-ger and wave equations and introduced a method for obtaining such endpoint inequalities, which can also be used in more general situations. 

Based on \cite{tao}, Rodnianski--Schlag \cite{rodsch} proved nonendpoint Strichartz estimates for large potentials with $\langle x \rangle^{-2-\epsilon}$ decay. Yajima obtained dispersive estimates for Schr\"{o}edinger's equation, under rapid decay assumptions on the potential, both directly \cite{yaj1} and by proving the boundedness of wave operators first \cite{yaj2}. His estimates \cite{yaj4} also apply in the presence of threshold eigenvalues or resonances.

Goldberg proved, in \cite{gol2}, dispersive estimates for almost-critical potentials and, in \cite{gol}, Strichartz estimates for $L^{3/2}$ --- thus scaling-critical --- potentials. Burq--Planchon--Stalker--Tahvildar-Zadeh, in \cite{bpst1} and \cite{bpst2}, obtained decay and Stric\-hartz estimates for a class of $L^{3/2, \infty}$ potentials (under specific regularity assumptions).

Lorentz spaces are essential in the statement of (\ref{1.46}): Lebesgue spaces will only lead to decay rates of the form $t^{-s}$ for $s \in \set R$, which are never integrable (in particular, $t^{-1}$ is not integrable). Lorentz spaces enable us to replace $t^{-1}$ by an integrable rate of decay, in a scaling-invariant setting.

Estimates (\ref{1.46}) and (\ref{eqn1.12}) are related to the more general ones of Foschi, \cite{foschi}. His result, stated here in a form relevant for comparison, is that if
\be\lb{desc}
\|e^{-it \mc H} P_c e^{is \mc H^*} P_c^*\|_{\mc L(L^1, L^{\infty})} \leq C |t-s|^{3/2},
\ee
then
\be
\|e^{-it \mc H} P_c e^{is \mc H^*} P_c^* F\|_{L^q_t L^r_x} \leq C \|F\|_{L^{\tilde q'}_t L^{\tilde r'}_x}.
\ee
$\tilde q'$ and $\tilde r'$ are dual exponents to $\tilde q$ and $\tilde r$ and both $(q, r)$ and $(\tilde q, \tilde r)$ must satisfy
\be
1 \leq q, r \leq \infty,\ \frac 1 q < \frac 3 2- \frac 3 r,\ q \leq r,
\ee
and likewise for $(\tilde q, \tilde r)$; in addition,
\be
\frac 1 q + \frac 1 {\tilde q} = \frac 3 2 \Big(1 - \frac 1 r - \frac 1 {\tilde r}\Big),\ \tilde r < 3r,\ r < 3 \tilde r.
\ee

Then, (\ref{1.46}) is in a forbidden region under Foschi's conditions, being given by $(q, r) = (1, 6)$, $(\tilde q, \tilde r) = (\infty, 6)$. (\ref{eqn1.12}) is the allowed endpoint case $(q, r) = (6, 6)$, $(\tilde q, \tilde r) = (6/5, 6)$; however, (\ref{desc}) is not generally true if $V$ is only in $L^{3/2}$, so (\ref{eqn1.12}) does not follow from \cite{foschi} either.

The result of Foschi differs even more from that of the current paper in the nonselfadjoint case, where $\mc H \ne \mc H^*$, but such differences can be overcome; see, for example, \cite{bec}.

{\emph{The matrix nonselfadjoint case.}} The Hamiltonian (\ref{3.2}) is nonselfadjoint, leading to specific difficulties not present in the selfadjoint case. In particular, one needs to reprove the boundedness of the wave operators, the limiting absorption principle, or even the $L^2$ boundedness of the time evolution. They do not follow as in the selfadjoint case, where, for example, the unitarity of the time evolution immediately implies the $L^2$ boundedness.

In \cite{schlag}, Schlag proved $L^1 \to L^{\infty}$ dispersive estimates for the Schr\"{o}dinger equation with a nonselfadjoint Hamiltonian, as well as non-endpoint Stric\-hartz estimates. Erdo\^{g}an--Schlag \cite{erdsch2} proved $L^2$ bounds for the evolution and a more detailed analysis of decay in the presence of threshold eigenvalues or resonances. In \cite{bec}, endpoint Strichartz estimates in the nonselfadjoint case were obtained following the method of Keel--Tao. More recently, Cuccagna--Mizumatchi \cite{cucmiz} derived all of the above from the boundedness of the wave operators, under more stringent conditions on the potential~$V$.

{\emph{Time-dependent estimates.}} Many estimates concerning the linear Schr\"{o}\-din\-ger equation with time-dependent potentials refer to the time-periodic case. These include the work of Goldberg \cite{gol}, of Costin--Lebowitz--Tanveer \cite{clt}, of Galtbayar--Jensen--Yajima \cite{gjy}, of Bourgain \cite{bou2} (for quasi\-periodic potentials), and of Wang \cite{wang}.

Other results in the time-dependent setting belong to Howland \cite{how}, Kitada--Yajima \cite{kiya}, Rodnianski--Schlag \cite{rodsch}, to Bourgain \cite{bou1}, \cite{bou3}, \cite{bou4}, and to Delort; see \cite{delort}.

The current paper's result, Theorem \ref{theorem_13}, allows only for a specific kind of time dependence of the potential: it can change its position, but not its profile. However, this case is important because it exactly describes the motion of a soliton that arises as the solution of a semilinear dispersive equation. The linear estimates are then useful in controlling a solution that is close to a soliton, hence in proving the latter's stability or instability under small perturbations. The same considerations apply to other nonlinear phenomena, such as vortices.

An even more general situation, studied in this paper, is one where the potential can not only move, but also rotate. Estimates in this case are useful in studying the stability of a soliton that is not radially symmetric and is free to rotate.

Concretely, making use of the techniques introduced here, recent papers such as \cite{bec2}, \cite{nakschl}, or \cite{nakschl2} improve upon older results in \cite{bec} or \cite{schlag} --- by allowing perturbations in energy-space or better, without decay --- and upon \cite{cucmiz}, by allowing the soliton and its perturbations to have a translation movement.


\subsection{Paper outline}

{\emph{Time-independent potentials.}}
We use Wiener's Theorem in the form of Theorem \ref{thm_1.6}, as follows. Consider a decomposition of the potential $V$ into
\be
V = V_1 V_2,\ V_1 = |V|^{1/2} \sgn V,\ V_2 = |V|^{1/2}.
\lb{1.35}
\ee
In the matrix nonselfadjoint case (\ref{3.2}), an analogous decomposition is
\be
V = V_1 V_2,\ V_1 = \sigma_3 \bpm W_1 & W_2 \\ \ov W_2 & W_1 \epm^{1/2},\ V_2 = \bpm W_1 & W_2 \\ \ov W_2 & W_1 \epm^{1/2},\ \sigma_3 = \bpm 1 & 0 \\ 0 & -1 \epm.
\lb{1.36}
\ee
By Duhamel's formula,
\be\begin{aligned}
Z(t) &= e^{it\mc H} Z(0) - i \int_0^t e^{i(t-s) \mc H} F(s) \dd s \\
&= e^{it\mc H_0} Z(0) - i \int_0^t e^{i(t-s) \mc H_0} F(s) \dd s + i \int_0^t e^{i(t-s)\mc H_0} V Z(s) \dd s.
\end{aligned}\ee
In addition, for any multiplicative decomposition $V = V_1 V_2$ of the potential,
\be
V_2 Z(t) = V_2 \bigg(e^{it\mc H_0} Z(0) - i \int_0^t e^{i(t-s) \mc H_0} F(s) \dd s \bigg) + i \int_0^t (V_2 e^{i(t-s)\mc H_0} V_1) V_2 Z(s) \dd s.
\ee
Consider the kernel defined by
\be\lb{1.40}
(T_{V_2, V_1} F)(t)= \int_0^t (V_2 e^{i(t-s)\mc H_0} V_1) F(s) \dd s
\ee
and its Fourier transform in regard to time
\be
\widehat T_{V_2, V_1}(\lambda) = i V_2 R_0(\lambda) V_1.
\ee
To apply Theorem \ref{thm_1.6}, we need to establish that the kernel of $T_{V_2, V_1}$ is time integrable. We do this by Proposition \ref{prop_21}, which implies that, 
for $H=L^2$ and $V_1$, $V_2 \in L^{3, 2}$ as given by (\ref{1.35}) and (\ref{1.36}), $T_{V_2, V_1} \in K = \mc L(H, M_t H)$.

Invertibility of $I-iT_{V_2, V_1}$ within $K$, addressed by Theorem \ref{thm_1.6}, is directly related to Strichartz estimates. Indeed, at least formally we can write
\be\begin{aligned}
V_2 Z(t) &= (I- i T_{V_2, V_1})^{-1} V_2 \bigg(e^{it\mc H_0} Z(0) - i \int_0^t e^{i(t-s) \mc H_0} F(s) \dd s \bigg).
\end{aligned}\ee

If the operator 
$I - iT_{V_2, V_1}$ can be inverted, then the computation is justified. However, as we shall see in Lemma \ref{lem_32}, this is not the case and we need to introduce a correction, to account for the eigenvalues.

Accordingly, in Lemma \ref{lem_32} we construct $\tilde V_1$ and $\tilde V_2$, which equal $V_1$ and $V_2$ plus exponentially small terms, such that $I - iT_{\tilde V_2, \tilde V_1}$ is invertible, then carry on the demonstration for this operator instead.

{\emph{Time-dependent potentials.}} The results in this case are derived in a perturbative manner from those for time-independent potentials. However, the perturbation is not small in Strichartz or similar spaces, so we use a particular method to account for~it.

Denote
\be\begin{aligned}
\tilde T_{\tilde V_2, \tilde V_1} F(t) &= \int_{-\infty}^t \tilde V_2 e^{i(t-s) {\mc H}_0} U(t) U(s)^{-1} \tilde V_1 F(s) \dd s, 
\end{aligned}\ee
respectively
\be\begin{aligned}
\tilde T_{\tilde V_2, I} F(t) &= \int_{-\infty}^t \tilde V_2 e^{i(t-s) {\mc H}_0} U(t) U(s)^{-1} F(s) \dd s.
\end{aligned}\ee
Following several transformations, we reduce equation (\ref{3.157}) to
\be\begin{aligned}
(I - i \tilde T_{\tilde V_2, \tilde V_1}) \tilde V_2 \tilde Z(t) &= \tilde T_{\tilde V_2, I} (-i \tilde F(s) + \delta_{s=0} \tilde Z(0)).
\end{aligned}\ee
If we can invert $I - i \tilde T_{\tilde V_2, \tilde V_1}$, this will imply the desired estimates. To do this, we show that
\be
\lim_{\|\pi'\|_{\infty} \to 0} \|\tilde T_{\tilde V_2, \tilde V_1} - T_{\tilde V_2, \tilde V_1}\| \to 0
\ee
where $T_{\tilde V_2, \tilde V_1}$ has the form (\ref{1.40}). Then, we use the Strichartz estimates obtained in the time-independent case to show that $I - i T_{\tilde V_2, \tilde V_1}$ is invertible.

Indeed, if $A$ is an invertible operator and $\|A-B\| < 1/\|A^{-1}\|$, then $B$ is also invertible and its inverse is given by
\be
B^{-1} = A^{-1} + A^{-1}(A-B)A^{-1} + \ldots.
\ee
This is the case here, so $I - i \tilde T_{\tilde V_2, \tilde V_1}$ is invertible.

\section{Proof of the results}



\subsection{Wiener's Theorem} Let $H$ be a Hilbert space, $\mc L(H, H)$ be the space of bounded linear operators from $H$ to itself, and $M_t H$ be the set of $H$-valued measures of finite mass on the Borel algebra of $\set R$. $M_t H$ is a Banach space, with the norm
\be\lb{3.11}
\|\mu\|_{M_t H} = \sup \bigg\{\sum_{k=1}^n \|\mu(D_k)\|_H \mid D_k \text{ disjoint Borel sets}\bigg\}.
\ee
Note that the absolute value of $\mu \in M_t H$ given by
\be
|\mu|(D) = \sup\bigg\{\sum_{k=1}^n \|\mu(D_k)\|_H \mid \bigcup_{k=1}^n D_k = D,\ D_k\text{ disjoint Borel sets}\bigg\}
\ee
is a positive measure of finite mass (bounded variation) and $\|\mu(D)\|_H \leq |\mu|(D)$. By the Radon--Nikodym Theorem, $\mu$ is in $M_t H$ if and only if it has a decomposition
\be
\mu = \mu_{\infty} |\mu|
\ee
with $|\mu| \in M_t$ (the space of real-valued measures in $t$ of finite mass) and $\mu_{\infty}~\in L^{\infty}_{d|\mu|(t)} H$.

Furthermore, $\|\mu\|_{M_t H} = \||\mu|\|_{M}$ and the same holds if we replace $H$ by any Banach space.

\begin{definition}\lb{def_k} Let $K = \mc L(H, M_t H)$ be the algebra (under convolution) of bounded operators from $H$ to $M_t H$.
\end{definition}

\begin{observation}
Examples of elements of $K$ are:
\begin{list}{\labelitemi}{\leftmargin=1em}
\item the identity $I$: $I(h) = \delta_{t=0} h$ for any $h \in H$;
\item scalar functions or measures, in general: if $f \in L^1_t$ or $M_t$ and $h \in H$, let $f(h) = f(t) h$;
\item product form operators $k(t)=f(t) A_0$, where $f \in M_t$, $A_0 \in \mc L(H, H)$: for $h \in H$, let $A(t) h = f(t) (A_0 h)$;
\item collections of operators $A(t)$, such that $A(t) \in \mc L(H, H)$ for almost all $t$ and $\int \|A(t)\|_{\mc L(H, H)} \dd t < \infty$;
\item finally, we also give an example that falls under none of the above categories, but is covered by the definition of $K$. Let $H=L^2(\set R)$, $f_0 \in L^2(\set R)$ be fixed, and $A(t) = \big(f(t) f_0(t)\big) f_0$. Then, for any $f \in L^2$
\be
\int_{\set R} \|A(t) f\|_2 \dd t \leq \|f_0\|_2 \int_{\set R} |f(t) f_0(t)| \dd t \leq \|f_0\|_2^2 \|f\|_2,
\ee
so $A \in K$, but $A(t)$ is not bounded on $L^2$ for any $t$, because $L^2 \not \subset L^{\infty}$.
\end{list}
\end{observation}

The following lemma also serves to clarify Definition \ref{def_k}:
\begin{lemma}
$K$ takes $M_t H$ into itself by convolution, is a Banach algebra under convolution, and multiplication by bounded continuous functions (and $L^{\infty}$ Borel measurable functions) is bounded on $K$:
\be
\|f A\|_K \leq \|f\|_{\infty} \|A\|_K.
\lb{3.8}
\ee
Furthermore, by integrating an element $A$ of $K$ over $\set R$ one obtains $\int_{\set R} A \in \mc L(H, H)$, with $\|\int_{\set R} A\|_{\mc L(H, H)} \leq \|k\|_K$.
\end{lemma}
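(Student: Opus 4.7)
The plan is to prove each of the four claims in turn, starting from the Radon-Nikodym decomposition stated just above: any $\mu \in M_t H$ factors as $\mu = \mu_\infty |\mu|$, with $|\mu|$ a finite positive Borel measure on $\set R$ and $\mu_\infty \in L^\infty_{d|\mu|} H$ of norm at most $1$, and $\|\mu\|_{M_t H} = |\mu|(\set R)$.

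First I would define the convolution of $k \in K$ with $\mu \in M_t H$ by
\[
(k * \mu)(E) := \int_{\set R} (k \mu_\infty(s))(E - s) \, d|\mu|(s),
\]
after verifying enough weak measurability of $s \mapsto k \mu_\infty(s) \in M_t H$ for this Bochner-type integral to be meaningful. The pointwise bound $\|k \mu_\infty(s)\|_{M_t H} \leq \|k\|_K \|\mu_\infty(s)\|_H \leq \|k\|_K$ immediately gives $\|k * \mu\|_{M_t H} \leq \|k\|_K \|\mu\|_{M_t H}$, which is the first claim. For the Banach algebra structure I set $(k_1 * k_2)(f) := k_1 * (k_2 f)$ for $f \in H$; the previous estimate then yields $\|k_1 * k_2\|_K \leq \|k_1\|_K \|k_2\|_K$, and associativity $(k_1 * k_2) * \nu = k_1 * (k_2 * \nu)$ for $\nu \in M_t H$ reduces to a Fubini interchange of iterated integrals against finite positive measures, which is legitimate by Tonelli applied to the variation norm. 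Completeness of $K$ is automatic since it is $\mc L$ of Banach spaces.

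For the multiplication property, given a bounded Borel function $f$ and $k \in K$, I would define $(fk)(g)$ to be the $H$-valued measure $E \mapsto \int_E f(t) \, d(k g)(t)$; the pointwise estimate $|fkg|(E) \leq \|f\|_\infty |kg|(E)$ then yields (\ref{3.8}) uniformly for bounded continuous and bounded Borel measurable $f$. Finally, $\int_{\set R} k$ is the map $g \mapsto (kg)(\set R)$, and $\|(kg)(\set R)\|_H \leq |kg|(\set R) = \|kg\|_{M_t H} \leq \|k\|_K \|g\|_H$ gives the claimed operator norm bound.

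The main obstacle I expect is arranging enough measurability of the pointwise construction $s \mapsto k \mu_\infty(s)$ for Bochner integration into the (possibly nonseparable) space $M_t H$. The standard workaround is to reduce to the essentially separable range of $\mu_\infty$, then extend by density in the variation norm from $H$-valued step functions, where everything reduces to a finite sum of translates and the identities hold by inspection.
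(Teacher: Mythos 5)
Your proposal is correct but the route is genuinely different from the paper's. To define $k * \mu$, you integrate the $M_t H$-valued map $s \mapsto k\,\mu_\infty(s)$, suitably translated, against the scalar measure $d|\mu|(s)$, producing a Bochner-type integral — which is why you then have to worry about strong measurability of an $M_t H$-valued function on a possibly nonseparable target. The paper sidesteps Bochner integration entirely: it builds an $H$-valued measure $\tilde\mu$ on $\set R \times \set R$ directly by setting $\tilde\mu(A \times B) = k(\mu(B))(A)$ on rectangles (note $\mu(B) \in H$, so $k(\mu(B)) \in M_t H$ and evaluating at $A$ gives an element of $H$), checks finite mass with the same bound $\|k\|_K \|\mu\|_{M_t H}$, and then defines $k(\mu)$ as the push-forward of $\tilde\mu$ under addition $(t,s) \mapsto t+s$. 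That is a purely set-theoretic Carath\'eodory-type extension from the algebra generated by rectangles, with no measurability of a vector-valued integrand to check. What your approach buys is a more familiar, explicit formula and the Radon--Nikodym decomposition doing visible work; what the paper's approach buys is a cleaner escape from the nonseparability issue you flag, and it also makes the bijection between $K$ and translation-invariant operators on $M_t H$ transparent (a fact the paper records in passing, though it is not strictly needed for the lemma). Your handling of multiplication by bounded functions and of $\int_{\set R} k$ matches the paper's, and the Fubini argument for associativity is the same in spirit. The measurability caveat you identify is a real one for your construction, and the reduction to essentially separable range plus density of $H$-valued step functions is the correct fix, but be aware that this is precisely the work the paper's rectangle-premeasure construction was designed to avoid.
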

\begin{proof} Boundedness of multiplication by continuous or $L^{\infty}$ functions follows from the decomposition $\mu = \mu_0 |\mu|$ for $\mu \in M_t H$. The last stated property is a trivial consequence of the definition of $M_t H$.

Let $\mu \in M_t H$, $A \in K$. Consider the product measure $\tilde \mu$ first defined on product sets $D_1 \times D_2 \subset \set R \times \set R$ by $\tilde \mu(D_1 \times D_2) = k(\mu(D_2))(D_1)$. This is again a measure of finite mass, $\tilde \mu \in M_{t, s} H$, and
\be
\|\tilde \mu\|_{M_{t, s} H} \leq \|A\|_K \|\mu\|_{M_t H}.
\ee
We then naturally define the convolution of an element of $K$ with an element of $M_t H$, by setting $A(\mu)(D)=\tilde \mu(\{(t, s) \mid t+s \in D\})$.

Thus, each $A \in K$ defines a bounded translation-invariant linear map from $M_t H$ to itself:
\be
\|A(\mu)\|_{M_t H} \leq \|A\|_K \|\mu\|_{M_t H}.
\lb{3.9}
\ee
The correspondence is bijective, as any translation-invariant $\tilde A \in \mc L(M_t H, M_t H)$ defines an element $A \in K$ by $A(h) = \tilde A(\delta_{t=0}h)$. These operations are indeed inverses of one another.

Associativity follows from Fubini's Theorem. $K$ is a Banach space by definition. The algebra property of the norm is immediate from (\ref{3.9}).
\end{proof}

Note that, due to our choice of a Hilbert space $H$, if $A \in K$ then $A^* \in K$ as well.

Define the Fourier transform of any element in $K$ by
\be
\widehat A(\lambda) = \int_{\set R} e^{-it\lambda} \dd A(t).
\ee
This is a bounded operator from $H$ to itself.
By dominated convergence, $\widehat A(\lambda)$ is a strongly continuous (in $\lambda$) family of operators for each $A$ and, for each $\lambda$,
\be
\|\widehat A(\lambda)\|_{H \to H} \leq \|A\|_K.
\ee
This follows from (\ref{3.8}).

The Fourier transform on $\set R$ of the identity is $\widehat I(\lambda) = I$ for every $\lambda$; $\widehat{A^*} = (\widehat A)^*$. Also, the Fourier transform takes convolution to composition.

If a kernel $A \in K$ has both a left and a right inverse, they must be the same, $b = b * I = b * (A * B) = (b * A) * B = I * B = B$.

Fix a cutoff function $\chi$ supported on a compact set and which equals one on some neighborhood of zero. We specify that the inverse Fourier transform on $\set R$ is
\be
f^{\vee}(t) = \frac 1 {2\pi} \int_{\set R} e^{it\lambda} f(\lambda) \dd \lambda.
\ee
\begin{theorem}\lb{thm7}
Let $K$ be the operator algebra of Definition \ref{def_k}. If $A \in K$ is invertible then $\widehat A(\lambda)$ is invertible for every $\lambda$. Conversely, assume $\widehat A(\lambda)$ is invertible for each $\lambda$, $A = I + L$, and
\be
\lim_{\epsilon \to 0} \|L(\cdot + \epsilon) - L\|_K = 0,\ \lim_{R \to \infty}\|(1-\chi(t/R)) L(t)\|_K =0.
\ee
Then $A$ is invertible.
\end{theorem}
\begin{observation}
If $L$ is in a closed unital subalgebra of $K$ with some specific properties, then its inverse will also belong to the same. This stability property helps in proving a precise decay rate for the inverse.

For example, if $\|A(t)\| \leq C \langle t \rangle^{-3/2}$, then $\|A^{-1}(t)\| \leq \tilde C \langle t \rangle^{-3/2}$ as well.
\end{observation}

We do not use compactness or Fredholm's alternative explicitly. A subset of $L^1_t$ is precompact if and only if its elements are equicontinuous and decay uniformly at infinity --- conditions that are actually required above, but not equivalent to compactness on $L^1_t H$.

The set $K_c$ of equicontinuous operators, that is
\be
K_c = \{L \in K \mid \lim _{\epsilon \to 0} \|L(\cdot + \epsilon) - L\|_K = 0\}
\ee
is a closed ideal, is translation invariant, contains the set of those kernels which are strongly measurable and $L^1$ (but $K_c$ is strictly larger), and $I$ is not in it.

Likewise, the set $K_0$ of kernels that decay at infinity, that is
\be
K_0 = \{L \in K \mid \lim_{R \to \infty}\|\chi_{|t|>R} L(t)\|_K =0\},
\ee
is a closed subalgebra. It contains the strong algebras that we defined above. Note that for operators $A \in K_0$ the Fourier transform is also a norm-continuous family of operators, not only strongly continuous.

The proof will also ensure that, if $I+L$ belongs to $(K_c \cap K_0) \oplus \set C I$ and is invertible in $K$, then its inverse is of the same~form.
\begin{proof}[Proof of Theorem \ref{thm7}]
If $A$ is invertible, that is $A * A^{-1} = A^{-1} * A = I$, then applying the Fourier transform yields
\be
\widehat A (\lambda) \widehat {A^{-1}}(\lambda) = \widehat {A^{-1}}(\lambda) \widehat A (\lambda) = I
\ee
for each $\lambda$, so $\widehat A(\lambda)$ is invertible.

Conversely, assume $\widehat A(\lambda)$ is invertible for every $\lambda$. Without loss of generality, we can replace $A$ with $A * A^*$. Then, the theorem's hypotheses are preserved and, in addition, $\widehat A(\lambda)$ is  non-negative for every $\lambda$.

A non-negative operator is invertible if and only if it is strictly positive, so $\widehat A(\lambda) >0$ for every $\lambda$. 

Fix $\lambda_0 \in \set R$ and let $\chi$ be a Schwartz-class function, such that $\supp \chi \subset [-2, 2]$ and $\chi(\lambda) = 1$ for $t \in [-1, 1]$. 
Also let $\eta = \widehat \chi$, $\eta_{\epsilon}(t) = \nobreak \epsilon \eta(\epsilon t) = \widehat{\chi(\epsilon^{-1} \cdot)}$.

For any kernel $B \in K_0$ --- that decays at infinity --- we next show that
\be\lb{deca}
\big\|\big((e^{is\lambda_0}\eta_{\epsilon}(s)) * B\big)(t) - e^{it\lambda_0} \eta_{\epsilon}(t) \widehat B(\lambda_0) \big\|_K \to 0.
\ee
On one hand, outside a large radius,
\be
\lim_{R \to \infty} \|(1-\chi(t/R)) B(t)\|_K = 0,
\ee
while on a fixed ball of radius $R$, as $\epsilon \to 0$,
\be
\bigg\|\int_{|s|\leq R} \big(\eta_{\epsilon}(t) - \eta_{\epsilon}(t-s)\big) B(s) \dd s\bigg\|_K \leq \|B\|_K \cdot \Big\|\sup_{|s|\leq R}\big(\eta_{\epsilon}(t) - \eta_{\epsilon}(t - s)\big)\Big\|_{L^1_t} \to 0.
\ee
By taking $R \to \infty$ and integrating separately inside and outside this radius, we evaluate the following expression in $t$:
\be\begin{aligned}\lb{2.18}
&\bigg\|\big((e^{is\lambda_0}\eta_{\epsilon}(s)) * B\big)(t) - e^{it\lambda_0} \eta_{\epsilon}(t) \int_{\set R} e^{-is\lambda_0} B(s) ds\bigg\|_K \leq \\
&\leq \bigg\|e^{it\lambda_0} \int \big(\eta_{\epsilon}(t) - \eta_{\epsilon}(t-s)\big) \chi(s/R) B(s) \dd s \bigg\|_K + 2 \|\eta_{\epsilon}\|_1 \big\|(1-\chi(t/R)) B(t)\big\|_K \\
&\leq \bigg\|\int_{|s|\leq 2R} \big(\eta_{\epsilon}(t) - \eta_{\epsilon}(t-s)\big) B(s) \dd s\bigg\|_K + 2 \|\eta_{\epsilon}\|_1 \big\|(1-\chi(t/R)) B(t)\big\|_K \to 0.
\end{aligned}\ee
In (\ref{2.18}) we employ the fact that $\|\eta_{\epsilon}\|_1 = \|\eta\|_1$ is independent of $\epsilon$.


Let
\be\begin{aligned}
A_{\epsilon}(t) &= (I - e^{it\lambda_0} \eta_{\epsilon}(t)) + (e^{i t \lambda_0} \eta_{\epsilon}(t)) * A,\\
\tilde A_{\epsilon}(t) &= \big(I - e^{it\lambda_0} \eta_{\epsilon}(t)\big) + e^{it\lambda_0} \eta_{\epsilon}(t) \widehat A(\lambda_0).
\end{aligned}\ee
Note that $\widehat {A_{\epsilon}}(\lambda) = \widehat A(\lambda)$ when $|\lambda-\lambda_0| \leq \epsilon$: 
\be
\widehat {A_{\epsilon}}(\lambda) = \big(1-\chi(\epsilon^{-1} (\lambda-\lambda_0))\big)I + \chi(\epsilon^{-1} (\lambda-\lambda_0)) \widehat A(\lambda).
\ee
By virtue of (\ref{deca}),
\be\lb{2.22}
\lim_{\epsilon \to 0} \|A_{\epsilon}- \tilde A_{\epsilon}\|_K = 0.
\ee
The Fourier transform of $\tilde A_{\epsilon}$ has the form
\be
\tilde A^{\wedge}_{\epsilon}(\lambda) = \big(1 - \chi(\epsilon^{-1} (\lambda-\lambda_0))\big) I + \chi(\epsilon^{-1} (\lambda-\lambda_0)) \widehat A(\lambda_0).
\ee
Since $\widehat A(\lambda_0) > 0$ is a strictly positive operator, $\tilde A^{\wedge}_{\epsilon}(\lambda)$ is also a positive operator for every $\lambda$, so it can be inverted for every $\lambda$:
\be
\|\tilde A^{\wedge}_{\epsilon}(\lambda)^{-1}\|_{\mc L(H, H)} \leq \max (2, 2 \|\widehat A(\lambda_0)^{-1}\|_{\mc L(H, H)}).
\ee
We can easily differentiate $\tilde A^{\wedge}_{\epsilon}(\lambda)$ as a function of $\lambda$, taking values in $\mc L(H, H)$:
\be
\partial_{\lambda} \tilde A^{\wedge}_{\epsilon} (\lambda) = \epsilon^{-1} \chi'(\epsilon^{-1}\lambda) (\widehat A(\lambda_0) - I).
\ee
Thus, we can do the same for its inverse:
\be
\partial_{\lambda} (\tilde A^{\wedge}_{\epsilon})^{-1} (\lambda) = - \tilde A^{\wedge}_{\epsilon}(\lambda)^{-1} \epsilon^{-1} \chi'(\epsilon^{-1}\lambda) (\widehat A(\lambda_0) - I) \tilde A^{\wedge}_{\epsilon}(\lambda)^{-1}.
\ee
Repeating this an arbitrary number of times, we obtain that the inverse is infinitely differentiable in $\lambda$, in a strong sense.

Moreover, $\tilde A^{\wedge}_{\epsilon}(\lambda) - I$ is compactly supported in $\lambda$, so the same is true for the inverse: $\tilde A^{\wedge}_{\epsilon}(\lambda)^{-1} - I$ is compactly supported.

Since $\tilde A^{\wedge}_{\epsilon}(\lambda)^{-1} - I$ is compactly supported and infinitely differentiable, it follows that its Fourier transform is in $L^1$. Therefore, $(\tilde A_{\epsilon})^{-1} \in K$.

Because under rescaling $\tilde A_{\epsilon}(t) = \epsilon \tilde A_1(\epsilon t)$, the same holds for their inverses, so $\|\tilde A_{\epsilon}^{-1}\|_K$ is independent of $\epsilon$. By (\ref{2.22}), it follows that $A_{\epsilon}$ is also invertible for sufficiently small $\epsilon$.

We have to consider infinity separately. As above, let $\eta_R = R \chi^{\vee}(R \cdot)$ and
\be
A_R = (I - \eta_R) * A + \eta_R,
\ee
so that $\widehat A_R(\lambda) = \widehat A(\lambda)$ when $|\lambda|>2R$:
\be
\widehat A_R(\lambda) = (1-\chi(R^{-1} \lambda)) \widehat A(\lambda) + \chi(R^{-1} \lambda) I.
\ee
Then
\be
A_R - I = (I - \eta_R) * (A - I) = (I - \eta_R) * L.
\ee
At this step we use the equicontinuity assumption of the hypothesis, namely
\be
\lim_{\epsilon \to 0}\|L-L(\cdot + \epsilon)\|_K = 0.
\ee
We write $\eta_R = (\chi_{[-\epsilon, \epsilon]} \eta_R) + (1-\chi_{[-\epsilon, \epsilon]}) \eta_R$, where $\chi_{[-\epsilon, \epsilon]}$ is the characteristic function of $[-\epsilon, \epsilon]$. As $\epsilon \to 0$,
\be\begin{aligned}
&\|(\chi_{[-\epsilon, \epsilon]} \eta_R) * L - \big(\textstyle\int_{\set R} \chi_{[-\epsilon, \epsilon]}(s) \eta_R(s) \dd s \big) L\|_K \leq \\
&\leq \Big(\int_{\set R} \chi_{[-\epsilon, \epsilon]}(s) |\eta_R(s)| \dd s \Big) \sup_{\delta \leq \epsilon} \|L-L(\cdot + \delta)\|_K \to 0,
\end{aligned}\ee
because $\int_{\set R} |\eta_R(s)| \dd s$ is independent of $R$. At the same time,
\be\begin{aligned}
&\lim_{R \to \infty} \|(1-\chi_{[-\epsilon, \epsilon]}) \eta_R\|_1 \to 0,
\end{aligned}\ee
so we obtain
\be
\lim_{R \to \infty} \|(I - \eta_R) * L\|_K = 0.
\ee
Thus, we can invert $A_R$ for large $R$. It follows that on some neighborhood of infinity the Fourier transform of $A$ equals that of an invertible operator.

Finally, 
consider a finite open cover of $\set R$ of the form
\be
\set R = D_{\infty} \cup \bigcup_{j=1}^n D_j,
\ee
where $D_j$ are open sets and $D_{\infty}$ is an open neighborhood of infinity, such that for $1 \leq j \leq n$ and for $j=\infty$ we have $\widehat A^{-1} = \widehat A_j^{-1}$ on the open set $D_j$. Take a smooth partition of unity subordinate to this cover, that is
\be
1 = \sum_j \chi_j,\ \supp \chi_j \subset D_j, \chi_j \text{ smooth}.
\ee
Then the following element of $K$ is the inverse of $A$:
\be
A^{-1} = \sum_{j=1}^n \widehat \chi_j * A_j^{-1} + (I - \sum_{j=1}^n \widehat \chi_j) * A_{\infty}^{-1}.
\ee
\end{proof}

We are also interested in whether, if $A$ is upper triangular, meaning that $A$ is supported on $[0, \infty)$ in the $t$ variable, the inverse of $A$ has the same property.
\begin{lemma}\lb{lemma8}
Given $A \in K$ upper triangular with $A^{-1} \in K$, $A^{-1}$ is upper triangular if and only if $\widehat A$ can be extended to a weakly analytic family of invertible operators in the lower half-plane, which is continuous up to the boundary, uniformly bounded, and with uniformly bounded inverse.
\end{lemma}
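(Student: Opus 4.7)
Since $K$ consists of convolution operators, ``upper triangular'' on the kernel side $\{(t,s) : t\geq s\}$ is equivalent to the operator-valued measure $A$ on $\set R$ being supported in $[0,\infty)$. My plan is to reduce both implications to the scalar Paley--Wiener theorem for measures: a finite complex Borel measure $\mu$ on $\set R$ satisfies $\supp\mu\subset[0,\infty)$ if and only if $\widehat\mu(\lambda)=\int e^{-it\lambda}\dd\mu(t)$ extends from $\set R$ to a bounded holomorphic function on $\{\Im\lambda<0\}$.

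For the forward implication, assume $A^{-1}$ is also supported in $[0,\infty)$. For every $u, v \in H$ the scalar measures $\langle \dd A\, u, v\rangle$ and $\langle \dd A^{-1}\, u, v\rangle$ are supported in $[0,\infty)$ with total variation at most $\|A\|_K\|u\|\|v\|$ and $\|A^{-1}\|_K\|u\|\|v\|$, so their Fourier transforms, which are the matrix elements of $\widehat A(\lambda)$ and $\widehat{A^{-1}}(\lambda)$, each extend to bounded holomorphic functions in the lower half-plane. Assembling these matrix elements yields weakly analytic, uniformly bounded operator extensions that remain inverses throughout the lower half-plane by analytic continuation of $\langle\widehat A(\lambda)\widehat{A^{-1}}(\lambda)u,v\rangle-\langle u,v\rangle$.

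For the converse, $A^{-1}\in K$ is given, so $A^{-1}$ is a finite operator-valued Borel measure. For any $u,v\in H$, define the scalar measure $\mu_{u,v}(E)=\langle A^{-1}(E)u,v\rangle$; its Fourier transform on $\set R$ is $\langle\widehat A(\lambda)^{-1}u,v\rangle$, which by hypothesis extends to a bounded analytic function in $\{\Im\lambda<0\}$, continuous up to the boundary. Scalar Paley--Wiener for measures then forces $\supp\mu_{u,v}\subset[0,\infty)$; since $u, v$ are arbitrary, $A^{-1}$ itself is supported in $[0,\infty)$, which is what upper triangularity means.

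The only substantive technical point is Paley--Wiener for a measure rather than for an $L^2$ function. I would handle it by testing against $\phi\in C_c^\infty((-\infty,0))$: Parseval gives $\int\phi\dd\mu_{u,v}=\frac{1}{2\pi}\int\widehat\phi(\lambda)\widehat{\mu_{u,v}}(-\lambda)\dd\lambda$, and both factors extend analytically to the upper half-plane. On a horizontal line $\set R+iT$, $\widehat{\mu_{u,v}}(-\lambda)$ stays bounded while $\widehat\phi(\alpha+iT)$ decays like $e^{-\delta T}/(1+\alpha^2)$ (with $\delta=\mathrm{dist}(\supp\phi,0)$), after one integration by parts. Shifting the contour and letting $T\to\infty$ kills the integral, so $\int\phi\dd\mu_{u,v}=0$ for all such $\phi$, giving the desired support condition. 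The remaining points---equivalence of weak and strong operator analyticity, and reconstructing the operator measure from its matrix elements via polarization---are entirely standard.
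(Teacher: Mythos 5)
Your proof is correct, and on the harder (converse) direction it takes a genuinely different route from the paper. The paper argues directly at the operator level: it splits $A^{-1} = A_- + A_+$ into its restrictions to $(-\infty,0]$ and $(0,\infty)$, observes that $\widehat{A_-} = \widehat{A^{-1}} - \widehat{A_+}$ is bounded on the closed lower half-plane (first term by hypothesis, second since $A_+$ is upper triangular), while $\widehat{A_-}$ is also bounded on the closed upper half-plane since $A_-$ is lower triangular, and then invokes Liouville's theorem to conclude $\widehat{A_-}$ is constant, so $A_-$ is concentrated at $\{0\}$ and $A^{-1}$ is upper triangular. You instead reduce to scalar matrix elements $\mu_{u,v}(E)=\langle A^{-1}(E)u,v\rangle$ and invoke scalar Paley--Wiener for measures, which you then prove by Parseval plus contour shifting against $\phi\in C_c^\infty((-\infty,0))$. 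Both are standard proofs of the same Paley--Wiener support statement; the paper's Liouville argument is more compact and avoids introducing test functions and polarization, while yours is more explicitly elementary. The forward direction is essentially the same in both treatments. Two small points worth tightening: after one integration by parts $\widehat\phi(\alpha+iT)$ decays only like $1/|\alpha|$, which is not integrable --- you want two integrations by parts for $1/(1+\alpha^2)$ (the factor $e^{-\delta T}$ is what kills the limit, but one still needs $\alpha$-integrability to shift the contour); and the assertion that $\widehat A(\lambda)^{-1}$ is itself a weakly analytic family, which underlies your application of Paley--Wiener, deserves a word (weak analyticity of bounded operator families is equivalent to strong analyticity, and the resolvent identity then gives analyticity of the inverse given uniform invertibility).
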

Intuitively, this lemma shows that convolution operators in $K$ with Volter\-ra-type kernels (i.e.\ no future dependence) have inverses with the same property, if the inverses exist.
\begin{proof}
In one direction, assume that both $A^{-1}$ and $A$ are upper triangular. Then, one can construct $\widehat A(\lambda)$ and $\widehat {A^{-1}}(\lambda)$ in the lower half-plane, as their defining integrals converge there. Strong continuity follows by dominated convergence and weak analyticity by means of the Cauchy integral formula. Furthermore, both $\widehat A(\lambda)$ and $\widehat {A^{-1}}(\lambda)$ are bounded, by $\|A\|_K$ and $\|A^{-1}\|_K$ respectively, and they are inverses of one another.

Conversely, let $A_- = \chi_{(-\infty, 0]} A^{-1}$ and $A_+ = \chi_{[0, \infty)} A^{-1}$. Taking the Fourier transform, one has that for each $\lambda$
\be
\widehat {A_-}(\lambda) + \widehat {A_+}(\lambda) = \widehat {A^{-1}}(\lambda).
\ee
On the lower half-plane, $\widehat {A^{-1}}(\lambda) = (\widehat A)^{-1}(\lambda)$ is uniformly bounded by hypothesis. Likewise, $\widehat {A_+}(\lambda)$ is bounded as the Fourier transform of an upper triangular operator. Thus, $\widehat {A_-}(\lambda)$ too is bounded on the lower half-plane.

However, $A_-$ is lower triangular, so its Fourier transform is also bounded in the upper half-plane. It follows that $\widehat {A_-}$ is a holomorphic function, bounded on the whole plane. By Liouville's theorem, then, $\widehat {A_-}$ must be constant, so $A_-$ can only have singular support at zero. Therefore $A$ is upper triangular.
\end{proof}

If $V \in L^{3/2}$, then we need an analogue of Wiener's Theorem for neither $\mc L(L^2, L^2) = \widehat {L^{\infty}}$ nor $\mc L(L^1, L^1)  = \mc M$, but an interpolation space of the two.
\begin{theorem}\lb{thm6/5}
Let $A \in K_{6/5} := (K, \widehat {L^{\infty}}_{\lambda} H)_{[1/3]}$. Assume $\widehat A(\lambda)$ is invertible for each $\lambda$, $A = I + L$, and
\be
\lim_{\epsilon \to 0} \|L(\cdot + \epsilon) - L\|_{K_{6/5}} = 0,\ \lim_{R \to \infty}\|(1-\chi(t/R)) L(t)\|_{K_{6/5}} =0.
\ee
Then $A$ is invertible in $K_{6/5}$.
\end{theorem}
\begin{proof} Note that elements of $K$ give rise to bounded operators on $L^1_t H$, while elements of $\widehat {L^{\infty}}_{\lambda} H$ do so for $L^2_t H$, so by interpolation elements of $K_{6/5}$ are bounded on $L^{6/5}_t H$.

Furthermore, both $L^1_t H$ and $\widehat {L^{\infty}}_{\lambda} H$ are translation-invariant algebras and their elements have Fourier transforms bounded in $\mc L(H, H)$; thus, $K_{6/5}$ also possesses the same properties.

Then, the proof is identical to that of Wiener's Theorem, Theorem \ref{thm_1.6}, earlier in this section.
\end{proof}

We next apply this abstract theory to the particular case of interest.

\subsection{The free evolution and resolvent in three dimensions}
We return to the concrete case (\ref{eq_3.2}) or (\ref{3.2}) of a linear Schr\"{o}dinger equation  on $\set R^3$ with scalar or matrix nonselfadjoint potential $V$. For simplicity, the entire subsequent discussion revolves around the case of three spatial dimensions.


In order to apply the abstract Wiener theorem, Theorem \ref{thm7}, it is necessary to exhibit an operator-valued measure of finite mass. Accordingly, we start by proving Proposition \ref{prop_21}.

\begin{proof}[Proof of Proposition \ref{prop_21}] We provide two proofs --- a shorter one based on real interpolation and a longer one, using the atomic decomposition of Lorentz spaces (Lemma \ref{lemma_30}), that exposes the proof machinery underneath.

Without loss of generality, we consider only the interval $[0, \infty)$.

Following the first approach, note that by duality (\ref{eq_3.39}) is equivalent to
\be\lb{eq_3.40}
\sum_{n \in \set Z} 2^n \int_{2^n}^{2^{n+1}} |\langle e^{it\mc H_0} f, g(t) \rangle| \dd t \leq C \|f\|_{L^{6/5, 1}} \|g\|_{L^1_t L^{6/5, 1}_x}
\ee
holding for any $f \in L^{6/5, 1}$ and $g \in L^1_t L^{6/5, 1}_x$.

From the usual dispersive estimate
\be
\|e^{-it\Delta} f\|_{p'} \leq t^{3/2(1-2/p)} \|f\|_p
\ee
we obtain that
\be
\int_{2^n}^{2^{n+1}} |\langle e^{it\mc H_0} f, g(t) \rangle| \dd t \leq 2^{n(3/2-3/p)} \|f\|_p \|g\|_{L^1_t L^p_x}.
\ee
Restated, this means that the bilinear mapping
\be\begin{aligned}
& T:L^p \times L^1_t L^p_x \to \ell^{\infty}_{3/p-3/2} \text{ (following the notation of Proposition \ref{prop_33})},\\
& T=(T_n)_{n \in \set Z},\ T_n(f, g) = \int_{2^n}^{2^{n+1}} \langle e^{it\mc H_0} f, g(t) \rangle \dd t
\end{aligned}\ee
is bounded for $1 \leq p \leq 2$.

Interpolating between $p=1$ and $p=2$, by using the real interpolation method (Theorem \ref{thm_32}) with $\theta=1/3$ and $q_1=q_2=1$, directly shows that
\be
T: (L^1, L^2)_{1/3, 1} \times (L^1_t L^1_x, L^1_t L^2_x)_{1/3, 1} \to (\ell^{\infty}_{3/2}, \ell^{\infty}_0)_{1/3, 1}
\ee
is bounded. By Proposition \ref{prop_33},
\be\begin{aligned}
(L^1, L^2)_{1/3, 1} &= L^{6/5, 1},\\
(L^1_t L^1_x, L^1_t L^2_x)_{1/3, 1} &= L^1_t (L^1_x, L^2_x)_{1/3, 1} = L^1_t L^{6/5, 1}_x,\\
(\ell^{\infty}_{3/2}, \ell^{\infty}_0)_{1/3, 1} &= \ell^1_1.
\end{aligned}
\ee
Hence $T$ is bounded from $L^{6/5, 1} \times L^1_t L^{6/5, 1}_x$ to $\ell^1_1$, which implies (\ref{eq_3.39}).

The alternative approach is based on the atomic decomposition of $L^{6/5, 1}$. By Lemma \ref{lemma_30},
\be\begin{aligned}
f = \sum_{j \in \set Z} \alpha_j a_j, g(t) = \sum_{k \in \set Z} \beta_k(t) b_k(t),
\end{aligned}\ee
where $a_j$ and, for each $t$, $b_k(t)$ are atoms with
\be\begin{aligned}
\mu(\supp(a_j)) &= 2^j,& \esssup |a_j| &= 2^{-5j/6},\\
\mu(\supp(b_k(t))) &= 2^k,& \esssup |b_k(t)| &= 2^{-5k/6}
\end{aligned}\ee
(here $\mu$ is the Lebesgue measure on $\set R^3$), and the coefficients $\alpha_j$ and $\beta_k(t)$ satisfy
\be
\sum_{j \in \set Z} |\alpha_j| \leq C \|f\|_{L^{6/5, 1}},\ \sum_{k \in \set Z} |\beta_k(t)| \leq C \|g(t)\|_{L^{6/5, 1}}.
\ee
Integrating in time and exchanging summation and integration lead to
\be
\sum_k \int_0^{\infty} |\beta_k(t)| \dd t \leq C \|g\|_{L^1_t L^{6/5, 1}_x}.
\ee
Since (\ref{eq_3.40}) is bilinear in $f$ and $g$, it suffices to prove it for only one pair of atoms. Fix indices $j_0$ and $k_0 \in \set Z$; the problem reduces to showing that
\be\lb{eq_3.51}
\sum_{n \in \set Z} 2^n \int_{2^n}^{2^{n+1}} \langle e^{it \mc H_0} a_{j_0}, \beta_{k_0}(t) b_{k_0}(t) \rangle \dd t \leq C \int_0^{\infty} |b_{k_0}(t)| \dd t.
\ee
The reason for making an atomic decomposition is that atoms are in $L^1 \cap L^{\infty}$, instead of merely in $L^{6/5, 1}$, enabling us to employ both $L^1$ to $L^{\infty}$ decay and $L^2$ boundedness estimates in the study of their behavior. For each $n$,
\be\begin{aligned}\lb{eq_1.352}
\int_{2^n}^{2^{n+1}} \langle e^{it \mc H_0} a_{j_0}, \beta_{k_0}(t) b_{k_0}(t) \rangle \dd t &\leq C 2^{-3n/2} \|a_{j_0}\|_1 \sup_t \|b_{k_0}(t)\|_1 \int_{2^n}^{2^{n+1}} |\beta_{k_0}(t)| \dd t \\
&= C 2^{-3n/2} 2^{j_0/6} 2^{k_0/6} \int_{2^n}^{2^{n+1}} |\beta_{k_0}(t)| \dd t
\end{aligned}\ee
as a consequence of the $L^1 \to L^{\infty}$ decay estimate. At the same time, by the $L^2$ boundedness of the evolution,
\be
\begin{aligned}\lb{eq_1.353}
\int_{2^n}^{2^{n+1}} \langle e^{it \mc H_0} a_{j_0}, \beta_{k_0}(t) b_{k_0}(t) \rangle \dd t & \leq C \|a_{j_0}\|_2 \sup_t \|b_{k_0}(t)\|_2 \int_{2^n}^{2^{n+1}} |\beta_{k_0}(t)| \dd t \\
&= C 2^{-j_0/3} 2^{-k_0/3} \int_{2^n}^{2^{n+1}} |\beta_{k_0}(t)| \dd t.
\end{aligned}
\ee
Using the first estimate (\ref{eq_1.352}) for large $n$, namely $n\geq j_0/3 + k_0/3$, and the second estimate (\ref{eq_1.353}) for small $n$, $n<j_0/3 + k_0/3$, we always obtain that
\be
\int_{2^n}^{2^{n+1}} \langle e^{it \mc H_0} a_{j_0}, \beta_{k_0}(t) b_{k_0}(t) \rangle \dd t \leq C 2^{-n} \int_{2^n}^{2^{n+1}} |\beta_{k_0}(t)| \dd t.
\ee
Multiplying by $2^n$ and summing over $n \in \set Z$ we retrieve (\ref{eq_3.51}), which in turn proves (\ref{eq_3.40}).
\end{proof}

The resolvent of the unperturbed Hamiltonian, $R_0(\lambda) = (\mc H_0 - \lambda)^{-1}$, is given by (\ref{eq_3.55}) in the scalar case (\ref{eq_3.2}) and (\ref{eq_3.56}) in the matrix case (\ref{3.2}). In either case, $R_0(\lambda) = (\mc H_0 - \lambda)^{-1}$ is an analytic function, on $\set C \setminus [0, \infty)$ or respectively on $\set C \setminus ((-\infty, -\mu] \cup [\mu, \infty))$. It can be extended to a continuous function in the closed lower half-plane or the closed upper half-plane, but not both at once, due to a jump discontinuity on the real line.

The resolvent is the Fourier transform of the time evolution. We formally state the known connection between $e^{it\mc H_0}$ and the resolvent $R_0 = (\mc H_0 - \lambda)^{-1}$.
\begin{lemma}\lb{lemma_22}
Let $\mc H_0$ be given by (\ref{eq_3.2}) or (\ref{3.2}). For any $f \in L^{6/5, 1}$ and $\lambda$ in the lower half-plane, the integral
\be
\lim_{\rho \to \infty} \int_0^{\rho} e^{-it\lambda} e^{it \mc H_0} f \dd t
\lb{3.47}
\ee
converges in the $L^{6, \infty}$ norm and equals $i R_0(\lambda) f$ or $i R_0(\lambda-i0) f$ in case $\lambda \in \set R$.

Furthermore, for real $\lambda$,
\be
\lim_{\rho \to \infty} \int_{-\rho}^{\rho} e^{-it\lambda} e^{it \mc H_0} f \dd t = i(R_0(\lambda-i0) - R_0(\lambda+i0)) f,
\lb{3.39}\ee
also in the $L^{6, \infty}$ norm.
\end{lemma}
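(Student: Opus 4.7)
The plan is to split the statement into a convergence claim and an identification claim, and to reduce the identification to an $L^2$ computation combined with a density argument.

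Convergence of (\ref{3.47}) for $\Im \lambda \leq 0$ follows directly from Proposition \ref{prop_21}: since $|e^{-it\lambda}| = e^{t\Im\lambda} \leq 1$ on $t \geq 0$, we have
\[
\|e^{-it\lambda} e^{it\mc H_0} f\|_{L^{6,\infty}} \leq \|e^{it\mc H_0} f\|_{L^{6,\infty}},
\]
and the right-hand side lies in $L^1([0,\infty))$ by Proposition \ref{prop_21}. Hence the truncated integrals form a Cauchy net in $L^{6,\infty}$.

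To identify the limit I first take $\Im \lambda < 0$ and $f \in \mc S(\set R^3)$. For such $f$, the functional calculus (applied componentwise in the matrix case) yields the $L^2$ identity
\[
\int_0^\rho e^{it(\mc H_0 - \lambda)} f \, dt = iR_0(\lambda)\bigl(I - e^{i\rho(\mc H_0 - \lambda)}\bigr) f,
\]
and the second term decays to zero in $L^2$ as $\rho \to \infty$ thanks to the exponential gain from $\Im\lambda<0$. So the $L^2$ limit is $iR_0(\lambda) f$; by uniqueness of the limit as a tempered distribution it coincides with the $L^{6,\infty}$ limit produced in the first paragraph. Density of $\mc S(\set R^3)$ in $L^{6/5,1}$ lets one remove the Schwartz restriction, provided $R_0(\lambda):L^{6/5,1}\to L^{6,\infty}$ is bounded uniformly for $\lambda$ in the closed lower half-plane. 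This follows from the explicit kernels (\ref{eq_3.55})--(\ref{eq_3.56}), each dominated pointwise by $C/|x-y|$, together with the Hunt--Young inequality for convolution with the weak-$L^3$ kernel $1/|x|$. Continuity of the kernel up to the real axis (including the $\sqrt{\lambda^2+2\mu}$ branch in the matrix case) then gives $R_0(\mu)f \to R_0(\lambda-i0)f$ in $L^{6,\infty}$ as $\mu \to \lambda \in \set R$ from below, while the left-hand side of (\ref{3.47}) is continuous in $\lambda$ on the closed lower half-plane by dominated convergence with majorant from Proposition \ref{prop_21}.

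For (\ref{3.39}), substituting $s=-t$ transforms
\[
\int_{-\rho}^0 e^{-it\lambda} e^{it\mc H_0} f \, dt = \int_0^\rho e^{is\lambda} e^{-is\mc H_0} f \, ds,
\]
and the same three-step argument applied with $\mc H_0$ replaced by $-\mc H_0$ identifies the right-hand side with $-iR_0(\lambda+i0)f$ in the limit (the boundary index flips because the convergent half-line transform is now controlled for $\Im\lambda>0$ and one extends down to the real axis from above). Adding the two halves produces the displayed jump expression.

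The step that I expect to demand the most care is the passage from Schwartz data to general $L^{6/5,1}$ data up to the real axis. One really does need the uniform bound $R_0(\lambda):L^{6/5,1}\to L^{6,\infty}$ across the closed lower half-plane, and this must be extracted from the explicit form of the kernel together with a convolution inequality on Lorentz spaces, rather than from any abstract spectral or functional-calculus argument that is available only in $L^2$.
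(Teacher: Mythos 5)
Your proof is correct and takes essentially the same route as the paper's: establish the $L^2$ identity $\int_0^\rho e^{-it\lambda}e^{it\mc H_0}f\,dt = iR_0(\lambda)\big(f - e^{-i\rho\lambda}e^{i\rho\mc H_0}f\big)$ on a dense set of good data, pass to the limit $\rho\to\infty$ via unitarity, and then extend to general $f\in L^{6/5,1}$ and to the boundary $\lambda\in\set R$ using the absolute integrability from Proposition \ref{prop_21} together with dominated-convergence control of the explicit resolvent kernels. The only presentational differences are that the paper works with the dense set $L^2\cap L^{6/5,1}$ and approaches the real axis by the shift $\lambda\mapsto\lambda-i\epsilon$ before densifying (which avoids needing the uniform bound $\|R_0(\lambda)\|_{L^{6/5,1}\to L^{6,\infty}}\le C$ on the whole closed half-plane that you correctly supply via the weak Young/Hunt inequality), and that the paper treats (\ref{3.39}) as immediate whereas you spell out the $t\mapsto -t$ substitution; neither difference amounts to a different method.
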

\begin{proof}

Note that (\ref{3.47}) is dominated by (\ref{eqn_3.39}),
\be
\int_0^{\infty} \|e^{it \mc H_0} f\|_{L^{6, \infty}} \dd t,
\ee
and this ensures its absolute convergence. Next, both (\ref{3.47}), as a consequence of the previous argument, and $i R_0(\lambda+i0)$ are bounded operators from $L^{6/5, 1}$ to $L^{6, \infty}$. To show that they are equal, it suffices to address this issue over a dense set. Observe that
\be
\int_0^{\rho} e^{-it(\lambda-i\epsilon)} e^{it \mc H_0} f \dd t = iR_0(\lambda-i\epsilon) (f - e^{-i\rho(\lambda-i\epsilon)} e^{i\rho \mc H_0} f).
\ee
Thus, if $f \in L^2 \cap L^{6/5, 1}$, considering the fact that $e^{it \mc H_0}$ is unitary and $R_0(\lambda-i\epsilon)$ is bounded on $L^2$,
\be
\lim_{\rho \to \infty} \int_0^{\rho} e^{-it(\lambda-i\epsilon)} e^{it \mc H_0} f \dd t = iR_0(\lambda-i\epsilon) f.
\lb{3.54}
\ee
Letting $\epsilon$ go to zero, the left-hand side in (\ref{3.54}) converges, by dominated convergence, to (\ref{3.47}), while the right-hand side (also by dominated convergence, using the explicit form (\ref{eq_3.55})-(\ref{eq_3.56}) of the operator kernels) converges to $iR_0(\lambda-i0) f$. Statement (\ref{3.39}) follows directly.
\end{proof}

\subsection{The exceptional set and the resolvent}
This section and the next mainly consist in a generalization of known results, following models such as \cite{agmon}, \cite{schlag}, and others cited. The main novelty is doing the proofs in a scaling- and translation-invariant setting.

We explore properties of the perturbed resolvent $R_V$. Important in this context is the \emph{Birman-Schwinger operator},
\be\lb{2.56}
\widehat T_{V_2, V_1} (\lambda) = i V_2 R_0(\lambda) V_1,
\ee
where $V = V_1 V_2$ and $V_1$, $V_2$ are as in (\ref{1.35}) or (\ref{1.36}).

The Birman-Schwinger operator is uniformly bounded on $L^2$ for $\lambda \in \set C$, including boundary values of $\lambda$ along the real line.
\begin{lemma}\lb{le27} Take $V \in L^{3/2, \infty}$; then there exists $C$ such that for any $\lambda \in \set C$
\be
\|V_2 R_0(\lambda) V_1\|_{\mc L(L^2_x, L^2_x)} \leq C < \infty.
\ee
\end{lemma}
\begin{proof} By (\ref{eq_3.55}) or (\ref{eq_3.56}), the convolution kernel of $R_0(\lambda)$ is in $L^{3, \infty}$ with uniformly bounded norm. Likewise, $V_1$ and $V_2$ are in $L^{3, \infty}$. Then
\be
\|V_2 R_0(\lambda) V_1 f\|_{L^2_x} \leq C \|R_0(\lambda) V_1 f\|_{L^{6/5, 2}_x} \leq C \|V_1 f\|_{L^{6, 2}_x} \leq C \|f\|_{L^2_x}.
\ee
Indeed, by Proposition \ref{prop_holder}, $L^{6/5, 2} * L^{3, \infty} \mapsto L^{6, 2}$, $L^2 \cdot L^{3, \infty} \mapsto L^{6/5, 2}$, and $L^{6, 2} \cdot L^{3, \infty} \mapsto L^2$.
\end{proof}

The relation between the Birman-Schwinger operator and the perturbed resolvent $R_V=(\mc H_0 + V - \lambda)^{-1}$ is that
\be\lb{eq_3.64}
R_V(\lambda) = R_0(\lambda) - R_0(\lambda) V_1 (I + V_2 R_0(\lambda) V_1)^{-1} V_2 R_0(\lambda)
\ee
and
\be\lb{eq_3.65}
(I + V_2 R_0(\lambda) V_1)^{-1} = I - V_2 R_V(\lambda) V_1.
\ee
Both follow by direct computation from the resolvent identity:
\be
R_V(\lambda) = R_0(\lambda) - R_0(\lambda) V R_V(\lambda) = R_0(\lambda) - R_V(\lambda) V R_0(\lambda).
\ee




\begin{definition}\lb{def_7} Given $V \in L^{3/2, \infty}_0$, its exceptional set $\mc E$ is the set of $\lambda$ in the complex plane for which $I - i\widehat T_{V_2, V_1}(\lambda)$ is not invertible from $L^2$ to itself.
\end{definition}

Other choices of $V_1$ and $V_2$ such that $V = V_1 V_2$, $V_1$, $V_2 \in L^{3, \infty}$ lead to the same operator up to conjugation.

Below we summarize a number of observations concerning the exceptional sets of operators in the form (\ref{eq_3.2}) or (\ref{3.2}).
\begin{proposition}\lb{prop28} Assume $V \in L^{3/2, \infty}_0$ is a potential as in (\ref{eq_3.2}) or (\ref{3.2}) and denote its exceptional set by $\mc E$.

\begin{itemize}\item[i)] $\mc E$ is bounded and discrete outside $\sigma(\mc H_0)$, but can accumulate toward $\sigma(\mc H_0)$. $\mc E \cap \sigma(\mc H_0)$ has null measure (as a subset of $\set R$). Elements of $\mc E \setminus \sigma(\mc H_0)$ are eigenvalues of $\mc H = \mc H_0 + V$.


\item[ii)] If $V$ is real and matrix-valued as in (\ref{3.2}), then embedded exceptional values must be eigenvalues, except for the endpoints of $\sigma(\mc H_0)$, which need not be eigenvalues. If $V$ is complex matrix-valued as in (\ref{3.2}), there is no restriction on embedded exceptional values.

\item[iii)] If $V$ is complex scalar as in (\ref{eq_3.2}) or complex matrix-valued as in (\ref{3.2}), then $\mc E$ is symmetric with respect to the real axis. In case $V$ is real-valued and as in (\ref{3.2}), $\mc E$ is symmetric with respect to both the real axis and the origin.
\end{itemize}
\end{proposition}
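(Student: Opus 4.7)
The plan is to apply the analytic Fredholm theorem to the Birman--Schwinger operator $T(\lambda) := V_2 R_0(\lambda) V_1$, together with explicit symmetry computations on the kernels (\ref{eq_3.55})--(\ref{eq_3.56}). Since $V \in L^{3/2,\infty}_0$, approximating $V_1, V_2$ by bounded compactly supported functions exhibits $T(\lambda)$ as a norm-limit of Hilbert--Schmidt operators on $L^2$, so $T(\lambda)$ is compact and analytic in $\lambda$ on $\set C \setminus \sigma(\mc H_0)$. For $\lambda$ far from the spectrum, exponential decay of $R_0(\lambda)$ yields $\|T(\lambda)\|_{L^2 \to L^2} < 1$, inverting $I + T(\lambda)$ via Neumann series; hence $\mc E$ is bounded. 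Analytic Fredholm on each connected component of $\set C \setminus \sigma(\mc H_0)$ then makes $\mc E \setminus \sigma(\mc H_0)$ discrete. For $\lambda$ in the resolvent set of $\mc H_0$, if $(I+T(\lambda))f = 0$, the Birman--Schwinger identity produces $\psi := R_0(\lambda) V_1 f \in L^2$ with $(\mc H - \lambda)\psi = 0$: indeed $(\mc H_0 - \lambda)\psi = V_1 f$ and $V\psi = V_1 V_2 R_0(\lambda) V_1 f = -V_1 f$. The null-measure assertion on $\sigma(\mc H_0)$ then uses limiting absorption: the $L^{3/2,\infty}_0$ hypothesis and the pointwise form of the kernels make $T(\lambda \pm i0)$ a norm-continuous compact-operator-valued family on the interior of $\sigma(\mc H_0)$, and a Fredholm-analytic extension across the cut shows the singular set is closed and nowhere dense.

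For the second bullet, let $\lambda$ be an embedded exceptional value for the matrix Hamiltonian (\ref{3.2}) and $(I + T(\lambda + i0))f = 0$. The candidate eigenfunction $\psi_+ := R_0(\lambda + i0) V_1 f$ a priori satisfies only the outgoing radiation condition. In the real case ($W_2$ real), the combined pseudo-selfadjoint identity $\sigma_3 \mc H^* \sigma_3 = \mc H$ and reality $\ov{\mc H} = \mc H$, together with $R_0(\lambda + i0)^* = R_0(\lambda - i0)$, force a matching between $\psi_+$ and $\psi_-$; a Rellich--Kato vanishing argument then improves the decay and promotes $\psi$ to an $L^2$ eigenfunction for $\lambda \in \sigma(\mc H_0) \setminus \{\pm\mu\}$. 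At the thresholds $\pm\mu$ the Yukawa exponent in (\ref{eq_3.56}) collapses and only $|x|^{-1}$-decay is available, so $\psi$ need not lie in $L^2$, permitting a threshold resonance. For complex $W_2$ the reality symmetry is lost and the Rellich--Kato argument fails, so no analogous restriction obtains.

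For the third bullet, the identity $\ov{R_0(\lambda)(x,y)} = R_0(\bar\lambda)(x,y)$ follows by inspection of (\ref{eq_3.55})--(\ref{eq_3.56}) with the appropriate branch of $\sqrt{\lambda^2+2\mu}$. Taking adjoints in $I + T(\lambda)$ relates invertibility at $\lambda$ to invertibility at $\bar\lambda$ for a conjugated factorization. In the matrix case the structural identity $\sigma_3 V^* \sigma_3 = V$ (valid because $W_1$ is real) closes the loop without altering $V$, yielding real-axis symmetry of $\mc E$; the scalar case proceeds analogously via the factor-exchange invariance of $\mc E$. For real matrix-valued $V$, the further identity $\sigma_1 V \sigma_1 = -\ov V$ together with $\sigma_1 \mc H_0 \sigma_1 = -\mc H_0$ supplies an independent $\lambda \mapsto -\lambda$ symmetry; composing with the real-axis symmetry gives the origin symmetry. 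The main obstacle I anticipate is the limiting-absorption step underpinning the null-measure claim: establishing uniform norm-continuity of $T(\lambda \pm i0)$ on compact subsets of the interior of $\sigma(\mc H_0)$ from only the weak-$L^{3/2}$ condition requires careful use of the $L^{3/2,\infty}_0$ closure assumption and the explicit form of the kernels.
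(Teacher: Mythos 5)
Your overall strategy matches the paper's: compactness via Hilbert--Schmidt approximation using the Rollnik-type bound, analytic Fredholm for discreteness outside $\sigma(\mc H_0)$, the Birman--Schwinger correspondence for eigenvalues off the spectrum, and conjugation symmetries ($\sigma_3 V\sigma_3 = V^*$, $\sigma_1 V\sigma_1 = -\ov V$, $\sigma_1\mc H_0\sigma_1 = -\mc H_0$, plus $\ov{R_0(\lambda)(x,y)} = R_0(\ov\lambda)(x,y)$) for the third bullet, which you get essentially right. Two steps are genuinely unfinished.

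First, the null-measure claim for $\mc E\cap\sigma(\mc H_0)$. Your conclusion ``closed and nowhere dense'' is strictly weaker than ``measure zero'' (a fat Cantor set is closed and nowhere dense), and the device you invoke --- a ``Fredholm-analytic extension across the cut'' --- is not available, since the limiting resolvents $R_0(\lambda\pm i0)$ genuinely disagree on the cut; there is no single analytic continuation through $\sigma(\mc H_0)$. The correct mechanism is what the paper cites: the meromorphic Fredholm theorem for an operator-valued family that is analytic in (one of) the open half-planes and has continuous boundary values on the cut (Reed--Simon \cite{reesim4}, p.~107), which directly yields that the boundary singular set has Lebesgue measure zero. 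You anticipated this as the main obstacle, and you were right; the fix is a citation, not a new argument.

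Second, the step promoting a real-matrix embedded exceptional value to an $L^2$ eigenvalue is only sketched. Saying that the identities $\sigma_3\mc H^*\sigma_3 = \mc H$ and $\ov{\mc H} = \mc H$ ``force a matching between $\psi_+$ and $\psi_-$'' followed by ``a Rellich--Kato vanishing argument'' names the right family of ideas but omits the content. The paper's argument makes the mechanism explicit: writing $G = (g_1, g_2)^T$ and expanding $G = -R_0(\lambda - i0)VG$ componentwise, one pairs the second equation against $W_2 g_1 + W_1 g_2$ and uses the reality of $W_2$ (together with the fact that $g_1$ is produced by the decaying Yukawa resolvent $(-\Delta + \lambda + \mu)^{-1}$ and hence is already in $L^1 \cap L^6$) to show that
\be
\big\langle (-\Delta - (\lambda - \mu - i0))^{-1}(W_2 g_1 + W_1 g_2),\, W_2 g_1 + W_1 g_2 \big\rangle \in \set R,
\ee
which by the Plemelj/imaginary-part formula forces $(W_2 g_1 + W_1 g_2)^\wedge(\xi) = 0$ on the sphere $|\xi|^2 = \lambda - \mu$. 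This is precisely the hypothesis needed for the Goldberg--Schlag bound \cite{golsch} $\|R_0(\lambda\pm i0)f\|_2 \leq C_\lambda\|f\|_1$, and interpolating with the unconditional $L^{4/3}\to L^4$ resolvent bound gives the Lorentz-scale gain that, iterated twice, lands $g_2$ in $L^2$. Without isolating the vanishing-on-the-sphere condition you have no input for the bootstrap, so this step does not close as written. Your remark about the thresholds $\pm\mu$ (the Yukawa exponent degenerating) is correct as motivation for excluding them, and the observation that the complex case breaks the reality symmetry is also the right reason the conclusion fails there.
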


\begin{proof}
By Lemma \ref{le27}, for $V \in L^{3/2, \infty}$, $V_2 R_0(\lambda) V_1$ is $L^2$-bounded for every~$\lambda$.

The Rollnick class is the set of measurable potentials $V$ whose Rollnick norm
\be
\|V\|_{\mc R} = \int_{(\set R^3)^2} \frac {|V(x)| |V(y)|}{|x-y|^2} \dd x \dd y
\ee
is finite. The Rollnick class $\mc R$ contains $L^{3/2}$. For a potential $V \in \mc R$, the Birman-Schwinger operator $\widehat T_{V_2, V_1}(\lambda)$ is Hilbert-Schmidt for every value of $\lambda$ in the lower half-plane up to the boundary.

Since $V \in L^{3/2, \infty}_0$, by Proposition \ref{prop_a3} $V_1$ and $V_2$ are in $L^{3, \infty}_0$ so there exist $V_1^n \to V_1$ and $V_2^n \to V_2$ in $L^{3, \infty}$ bounded and of compact support. It follows that $\widehat T_{V_2, V_1}$ is compact whenever $V$ is in $L^{3/2, \infty}_0$.

By the analytic and meromorphic Fredholm theorems (for statements see \cite{reesim}, p.\ 101, and \cite{reesim4}, p.\ 107), the exceptional set $\mc E$ is closed, bounded, and consists of at most a discrete set outside $\sigma(\mc H_0)$, which may accumulate toward $\sigma(\mc H_0)$, and a set of measure zero contained in $\sigma(\mc H_0)$.

Assuming that $V \in L^{3/2, \infty}_0$ is real-valued and scalar, the exceptional set resides on the real line. Indeed, if $\lambda$ is exceptional, then by the Fredholm alternative (\cite{reesim1}, p.\ 203) the equation
\be
f = -V_2 R_0(\lambda) V_1 f
\ee
must have a solution $f \in L^2$. Then $g = R_0(\lambda) V_1 f$ is in $|\dl|^{-2} L^{6/5, 2} \subset L^{6, 2}$ and satisfies
\be
g = - R_0(\lambda) V g.
\lb{3.471}
\ee

If $\lambda \in \mc E \setminus \sigma(\mc H_0)$, the kernel's exponential decay implies that $\lambda$ is an eigenvalue for $\mc H$ and that the corresponding eigenvectors must be at least in $\langle \dl \rangle^{-2} L^{6/5, 2}$; in fact, by Agmon's argument, they have exponential decay.

Furthermore, by applying $\mc H_0 -\lambda$ to both sides we obtain
\be
(\mc H_0 + V - \lambda) g = 0.
\ee
In the case of a real scalar potential $V$, $\mc H_0+V$ is self-adjoint, so this is a contradiction for $\lambda \not \in \set R$. In general, exceptional values off the real line can indeed occur.

For real-valued $V \in L^{3/2, \infty}_0$ having the matrix form (\ref{3.2}), 
any embedded exceptional values must be eigenvalues, following the argument of Lemma 4 of Erdogan--Schlag \cite{erdsch2}.

Explicitly, consider $\lambda \in \mc E \cap \sigma(\mc H_0) \setminus \{\pm \mu\}$; without loss of generality $\lambda>\mu$. It corresponds to a nonzero solution $G \in L^{6, 2}$ of
\be\lb{eq_3.75}
G = - R_0(\lambda-i0) V G.
\ee
We show that $G \in L^2$ and that it is an eigenfunction of $\mc H$. Let
\be
G = \bpm g_1 \\ g_2 \epm,\ V = \bpm W_1 & W_2 \\ -W_2 & -W_1 \epm,\ \mc H_0 = \bpm \Delta - \mu & 0 \\ 0 & -\Delta + \mu \epm,
\ee
where $W_1$ and $W_2$ are real-valued. We expand (\ref{eq_3.75}) accordingly into
\be\begin{aligned}\lb{eq_3.77}
g_1 &= (-\Delta + \lambda + \mu)^{-1}(W_1 g_1 + W_2 g_2) \\
g_2 &= (-\Delta - (\lambda - \mu - i0))^{-1}(W_2 g_1 + W_1 g_2).
\end{aligned}\ee
This implies that $g_1 \in L^1 \cap L^6$ and
\be\begin{aligned}
\langle g_2, W_2 g_1 + W_1 g_2 \rangle &= \langle (-\Delta - (\lambda - \mu - i0)^{-1}(W_2 g_1 + W_1 g_2), (W_2 g_1 + W_1 g_2) \rangle, \\
\langle g_1, W_2 g_2 \rangle &= \langle (-\Delta + \lambda + \mu)^{-1}(W_1 g_1 + W_2 g_2), W_2 g_2 \rangle, \\
\langle g_1, W_1 g_1 \rangle &= \langle (-\Delta + \lambda + \mu)^{-1}(W_1 g_1 + W_2 g_2), W_1 g_1 \rangle.
\end{aligned}\ee
However,
\be
\langle g_2, W_2 g_1 + W_1 g_2 \rangle = \ov{\langle g_1, \ov W_2 g_2 \rangle} + \langle g_2, W_1 g_2 \rangle. 
\ee
Since $W_2$ is real-valued, it follows that
\be
\langle (-\Delta - (\lambda - \mu - i0)^{-1}(W_2 g_1 + W_1 g_2), (W_2 g_1 + W_1 g_2) \rangle
\ee
is real-valued. Therefore the Fourier transform vanishes on a sphere:
\be
(W_2 g_1 + W_1 g_2)^{\wedge}(\xi) = 0
\ee
for $|\xi|^2 = \lambda-\mu$. We then apply Agmon's bootstrap argument, as follows. By Corollary 13 of \cite{golsch}, if $f \in L^1$ has a Fourier transform that vanishes on the sphere, meaning $\hat f(\xi) = 0$ for every $\xi$ such that $|\xi|^2 = \lambda \ne 0$, then
\be
\|R_0(\lambda \pm i0) f\|_2 \leq C_{\lambda} \|f\|_1.
\ee
Interpolating between this and
\be
\|R_0(\lambda \pm i0) f\|_{4} \leq C_{\lambda} \|f\|_{4/3},
\ee
which holds without conditions on $\hat f$, we obtain that for $1 < p < 4/3$ and for $\hat f = 0$ on the sphere of radius $\sqrt \lambda>0$
\be
\|R_0(\lambda \pm i0) f\|_{L^{2p/(2-p), 2}} \leq C_{\lambda} \|f\|_{L^{p, 2}}.
\ee
Thus, starting with the right-hand side of (\ref{eq_3.77}) in $L^{6/5, 2}$, we obtain that $g_2 \in L^{3, 2}$, a gain over $L^{6, 2}$. Iterating twice, we obtain that $g_2 \in L^2$. Therefore $g$ is an $L^2$ eigenvector.

Thus, for a real-valued $V \in L^{3/2, \infty}_0$ having the matrix form (\ref{3.2}), the exceptional set consists only of eigenvalues, potentially together with the endpoints of the continuous spectrum $\pm \mu$.

For a complex potential of the form (\ref{3.2}), neither of the previous arguments holds. Embedded exceptional values can occur and they need not be eigenvalues.

Next, we examine symmetries of the exceptional set $\mc E$. If $V$ is real-valued and scalar, we have already characterized $\mc E$ as being situated on the real line. If $V$ is scalar, but complex-valued, then consider an exceptional value $\lambda$, for which, due to compactness, there exists $f \in L^2$ such that
\be
f = -|V|^{1/2} \sgn V R_0(\lambda) |V|^{1/2} f.
\ee
Then
\be
(\sgn V \ov f) = -|V|^{1/2} R_0(\ov \lambda) |V|^{1/2} \sgn {\ov V} (\sgn V \ov f),
\ee
so the adjoint has an exceptional value at $\ov \lambda$. However, $\sigma(\widehat T_{V_2, V_1}(\lambda)) = \sigma(\widehat T_{V_1, V_2}(\lambda)^*)$, so all this proves that the exceptional set $\mc E$ is symmetric with respect to the real axis.

If $V$ has the matrix form (\ref{3.2}), then note that $\sigma_1 V \sigma_1 = -\ov V$, $\sigma_3 V \sigma_3 = V^*$, where $\sigma_1$ is the Pauli matrix
\be
\sigma_1 = \bpm 0 & 1 \\ 1 & 0 \epm,\ \sigma_1 \sigma_3 = -\sigma_3 \sigma_1.
\ee
Let $\lambda$ be an exceptional value, for which
\be
f = -\sigma_3 (\sigma_3 V)^{1/2} R_0(\lambda) (\sigma_3 V)^{1/2} f.
\ee
Here $\sigma_3 V = \bpm W_1 & W_2 \\ \ov W_2 & W_1 \epm$ is a selfadjoint matrix.

Then
\be\begin{aligned}
\ov f &= -\sigma_3 (\sigma_3 \ov V)^{1/2} R_0(\ov \lambda) (\sigma_3 \ov V)^{1/2} \ov f \\
&= -\sigma_3 (\sigma_3 V)^{1/2} \sigma_3 R_0(\ov \lambda) \sigma_3 (\sigma_3 V)^{1/2} \sigma_3 \ov f \\
&= -\sigma_3 (\sigma_3 V)^{1/2} R_0(\ov \lambda) (\sigma_3 V)^{1/2}\sigma_3 \ov f
\end{aligned}\ee
since $R_0$ commutes with $\sigma_3$, so whenever $\lambda$ is an exceptional value so is $\ov \lambda$.

If $V$ as in (\ref{3.2}) is a real-valued matrix, then by the same methods we obtain that $-\lambda$ is an exceptional value whenever $\lambda$ is an exceptional value.
\end{proof}

\subsection{The time evolution and projections}\lb{sect_1.35}
We begin with a basic lemma, which applies equally in the time-dependent case.
\begin{lemma}\lb{lemma_24}
Assume $V \in L^{\infty}$ and the Hamiltonian is described by (\ref{eq_3.2}) or (\ref{3.2}). Then the equation
\be
i \partial_t Z + \mc H Z = F,\ Z(0) \text{ given},
\lb{1.358}\ee
admits a weak solution $Z$ for $Z(0) \in L^2$, $F \in L^{\infty}_t L^2_x$ and
\be
\|Z(t)\|_2 \leq C e^{t \|V\|_{\infty}} \|Z(0)\|_2 + \int_0^t e^{(t-s) \|V\|_{\infty}} \|F(s)\|_2 \dd s.
\lb{1.359}\ee
\end{lemma}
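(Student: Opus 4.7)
The plan is to recast the equation as the Duhamel fixed-point problem
\[
Z(t) = e^{it\mathcal{H}_0} Z(0) - i\int_0^t e^{i(t-s)\mathcal{H}_0} F(s)\,ds + i\int_0^t e^{i(t-s)\mathcal{H}_0} V Z(s)\,ds,
\]
and to exploit the unitarity of $e^{it\mathcal{H}_0}$ on $L^2$. This unitarity holds in both settings of interest: in the scalar case $(\ref{eq_3.2})$ because $-\Delta + \mathrm{Re}\text{-part}$ is irrelevant ($\mathcal{H}_0 = -\Delta$ is selfadjoint), and in the matrix case $(\ref{3.2})$ because $\mathcal{H}_0$ is the direct sum of the selfadjoint operators $\Delta-\mu$ and $-\Delta+\mu$ acting on $L^2\oplus L^2$. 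The nonselfadjointness of the full Hamiltonian $\mathcal{H} = \mathcal{H}_0 + V$ plays no role here; all I use is that $V$, viewed as a matrix-valued multiplication operator, is bounded on $L^2$ with norm at most $\|V\|_\infty$.

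Existence of a weak solution then comes from Banach's fixed-point theorem: for any $T < 1/\|V\|_\infty$, the mapping $Z(\cdot) \mapsto i\int_0^{\cdot} e^{i(\cdot-s)\mathcal{H}_0} V Z(s)\,ds$ is a strict contraction on $C([0,T];L^2_x)$, producing a solution on $[0,T]$; since the contraction radius depends only on $\|V\|_\infty$, the solution extends to all of $\set R$ by iterating forward and backward in time. One then checks in the standard way that the resulting $Z$ satisfies the equation weakly, by pairing against test vectors and using $(e^{it\mathcal{H}_0})^* = e^{-it\mathcal{H}_0}$.

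For the quantitative bound $(\ref{1.359})$, I would iterate the Duhamel formula into the Dyson series
\[
Z(t) = \sum_{n=0}^\infty i^n\!\!\!\int\limits_{0 \le s_1 \le \cdots \le s_n \le t}\!\!\! e^{i(t-s_n)\mathcal{H}_0} V \cdots V e^{i s_1 \mathcal{H}_0} Z(0)\,ds - \sum_{n=0}^\infty i^{n+1}\!\!\int_0^t\!\!\!\int\limits_{s \le s_1 \le \cdots \le s_n \le t}\!\!\! e^{i(t-s_n)\mathcal{H}_0} V \cdots V e^{i(s_1-s)\mathcal{H}_0} F(s)\,d\vec{s},
\]
which converges absolutely in $L^2$ by what follows. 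Using the unitarity of each free propagator on $L^2$ and $\|V\|_{L^2\to L^2} \le \|V\|_\infty$, the $n$-th term of the first sum is bounded by $\|V\|_\infty^n\,\frac{t^n}{n!}\|Z(0)\|_2$ (the factorial is the volume of the ordered simplex $\{0 \le s_1 \le \cdots \le s_n \le t\}$), and the $n$-th term of the second sum by $\int_0^t \|V\|_\infty^n\,\frac{(t-s)^n}{n!}\|F(s)\|_2\,ds$. Summing in $n$ yields exactly $e^{t\|V\|_\infty}\|Z(0)\|_2 + \int_0^t e^{(t-s)\|V\|_\infty}\|F(s)\|_2\,ds$, which is $(\ref{1.359})$ with $C=1$.

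There is no serious obstacle; the argument is entirely a bounded-perturbation-of-a-unitary-group computation, and in fact it equally well yields uniqueness by applying the same Gronwall-type estimate to the difference of two solutions with trivial data. The only point that warrants a line of verification is the matrix case, where one simply notes that $V$ from $(\ref{3.2})$ has entries in $L^\infty$ and hence defines a bounded (though not selfadjoint) multiplication operator on $L^2 \oplus L^2$ with operator norm controlled by $\|V\|_\infty$, so the whole scalar argument carries over unchanged.
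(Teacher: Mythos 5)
Your proof is correct and takes essentially the same route as the paper: a Duhamel/contraction-mapping argument for local existence, bootstrapped to a global solution because the interval length depends only on $\|V\|_\infty$, followed by an a priori bound. The only cosmetic difference is that the paper invokes Gronwall's inequality for the estimate (\ref{1.359}) while you sum the Dyson series explicitly; these are two equivalent ways of expressing the same bounded-perturbation-of-a-unitary-group computation.
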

\begin{proof} We introduce an auxiliary variable and write
\be
i \partial_t Z + \mc H_0 Z = F - V Z_1,\ Z(0) \text{ given}.
\ee
Over a sufficiently small time interval $[T, T+\epsilon]$, whose size $\epsilon$ only depends on $\|V\|_{\infty}$, the map that associates $Z$ to some given $Z_1$ is a contraction, in a sufficiently large ball in $L^{\infty}_t L^2_x$. The fixed point of this contraction mapping is then a solution to (\ref{1.358}).

This shows that the equation is locally solvable and, by bootstrapping, since the length of the interval is independent of the size of $F$ and of the initial data $Z(T)$, we obtain an exponentially growing global solution. The bound (\ref{1.359}) follows by Gronwall's inequality.
\end{proof}

For a nonselfadjoint operator such as $\mc H$ given by (\ref{3.2}), the projections on various parts of the spectrum need not be selfadjoint operators. The following lemma characterizes such Riesz projections (following \cite{schlag}, where it appeared in a different form).
\begin{lemma}\lb{pp} Assume $V \in L^{3/2, \infty}_0$.\begin{enumerate}\item[i)] To each element $\zeta$ of the exceptional set of $\mc H$ outside of $\sigma(\mc H_0)$ there corresponds a family of operators
\be
P^k_{\zeta} = \frac 1 {2\pi i} \int_{|z-\zeta| = \epsilon} R_V(z) (z-\zeta)^k \dd z.
\lb{3.111}
\ee
They have finite rank, $P^k_{\zeta} = 0$ for all $k \geq n$, for some $n$, $P^0_{\zeta}=(P^0_{\zeta})^2$, and more generally $(P_{\zeta}^k)(P_{\zeta}^{\ell}) = P_{\zeta}^{k+\ell}$.
\item[ii)] The ranges of $P^k_{\zeta}$ and of their adjoints $(P^k_{\zeta})^*$ are spanned by exponentially decaying functions that also belong to $\langle \dl \rangle ^{-2} L^{6/5, 2} \subset L^{6, 2}$.\\
Thus, each such projection is bounded from $L^{6/5, 2} + L^{\infty}$ to $L^1 \cap L^{6, 2}$.
\end{enumerate}
\end{lemma}
Functions in $\Ran P^k_{\zeta}$ are called \emph{generalized eigenfunctions} of $\mc H$.

We also refer the reader to Hundertmark-Lee, \cite{hule}, who proved the $L^2$ exponential decay of (generalized) eigenfunctions in the gap $-\mu<\Re \zeta<\mu$ under more general conditions.

\begin{proof} If $R_V(\zeta) = (\mc H_0 + V - \zeta)^{-1}$ exists as a bounded operator from $L^{6/5, 2}$ to $L^{6, 2}$, then $\zeta$ is not in the exceptional set and vice-versa, as a consequence of (\ref{eq_3.64}) and (\ref{eq_3.65}).

Form the contour integral, following Schlag \cite{schlag} and Reed--Simon \cite{reesim1},
\be
P^k_{\zeta} = \frac 1 {2\pi i} \int_{|z-\zeta| = \epsilon} R_V(z) (z-\zeta)^k \dd z.
\ee
This integral is independent of $\epsilon$ if $\epsilon$ is sufficiently small and $P^k_{\zeta} = 0$ for $k \geq n$. Using the Cauchy integral, it immediately follows that $(P_{\zeta}^k)(P_{\zeta}^{\ell}) = P_{\zeta}^{k+\ell}$. Furthermore,
\be\lb{3.135}
\mc H P^0_{\zeta} = P^1_{\zeta} + \zeta P^0_{\zeta}.
\ee


It is a consequence of Fredholm's theorem that the range of $P^0_{\zeta}$ is finite dimensional; from (\ref{3.111}) it follows that $\Ran(P^0_{\zeta}) \subset L^2 \cap L^{6, 2}$. Also, $\Ran(P^0_{\zeta})$ is the generalized eigenspace of $\mc H - \zeta$, meaning
\be
\Ran(P^0_{\zeta}) = \bigcup_{k \geq 0} \Ker((\mc H - \zeta)^k).
\ee
One inclusion follows from (\ref{3.135}) and the fact that $P^k_{\zeta} = 0$ for $k \geq n$. The other inclusion is a consequence of the fact that, if $(\mc H - \zeta) f = 0$, then $R_V(z) f = (\zeta - z)^{-1} f$ and, using the definition (\ref{3.111}), $P^0_{\zeta} f = f$. For higher values of $k$ we proceed by induction.

Furthermore, $\Ran(P^0_{\zeta})$ consists of functions in $\langle \dl \rangle^{-2} L^{6/5, 2}$. If $f$ is a generalized eigenfunction, meaning $f \in L^2 \cap L^{6, 2}$ and $(\mc H - \zeta)^n f = 0$, then
\be
(\mc H_0 + V) f = \zeta f + g,
\ee
where $g$ is also a generalized eigenfunction. Assuming by induction that $g \in \langle \dl \rangle^{-2} L^{6/5, 2}$ (or is zero, to begin with), the same follows for $f$.

Likewise, if $g$ is in $L^1$ or exponentially decaying, we can infer the same about $f$. Indeed, assume that $g \in e^{-\epsilon |x|} L^{6/5, 2}$, for some small $\epsilon$. Note that
\be
f = R_0(\zeta) (-V f + g) = (I + R_0(\zeta) V_2)^{-1}R_0(\zeta) (-V_1 f + g),
\ee
where $V = V_1 + V_2$. We choose $V_1$ and $V_2$ such that $V_1$ has compact support and $V_2$ is small in the $L^{3/2, \infty}$ norm. It follows that $R_0(\zeta) (-V_1 f + g)$ is in $e^{-\epsilon |x|} L^{6, 2}$, while $(I + R_0(\zeta) V_2)^{-1}$, given by an infinite Born series, is a bounded operator on the same space.

Thus, $f \in e^{-\epsilon |x|} L^{6, 2}$ is exponentially decaying. By suitably choosing a finite sequence of epsilons, the conclusion follows for all the generalized eigenfunctions associated to $\zeta$.

The range of $(P^0_{\zeta})^*$ is the generalized eigenspace of $\mc H^* - \ov \zeta$, which means that it is also finite-dimensional and spanned by exponentially decaying functions in $\langle \dl \rangle^{-2} L^{6/5, 2}$.
\end{proof}

Throughout the sequel, we assume that there are no exceptional values embedded in $\sigma(\mc H_0)$. By Fredholm's analytic theorem, this implies that there are finitely many exceptional values overall.

Then we can define $P_c$, the projection on the continuous spectrum, simply as the identity minus the sum of all projections corresponding to the exceptional values (which coincide, in this case, with the point spectrum):
\be\lb{pc}
P_c = I - P_p = I - \sum_{k=1}^n P_{\zeta_k}^0.
\ee
$P_c$ commutes with $\mc H$ and with $e^{it \mc H}$, as a direct consequence of the definition and of Lemma \ref{pp}.

In order to characterize $P_c$, we employ the subsequent lemma, which appeared in Schlag \cite{schlag} under more stringent assumptions.
\begin{lemma}\lb{lemma_31} Consider $V \in L^{3/2, \infty}_0$ and assume that there are no exceptional values of $\mc H$ embedded in the spectrum of $\mc H_0$. Then for sufficiently large $y$
\be
\langle f, g \rangle = \frac i {2\pi} \int_{\set R} \langle (R_V(\lambda + iy) - R_V(\lambda - iy)) f, g \rangle \dd \lambda
\ee
and the integral is absolutely convergent. Furthermore,
\be\lb{2.93}
\langle f, g \rangle = \frac i {2\pi} \int_{\sigma(\mc H_0)} \langle (R_V(\lambda + i0) - R_V(\lambda - i0)) f, g \rangle \dd \lambda + \sum_{k=1}^n \langle P^0_{\zeta_k} f, g \rangle
\ee
where $P^0_{\zeta_k}$ are projections corresponding to the finitely many eigenvalues $\zeta_k$.

\end{lemma}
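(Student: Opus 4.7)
The plan is to view the two displayed identities as two instances of the same Cauchy contour integral, connected by contour deformation. Concretely, the Dunford functional calculus applied to $F\equiv 1$ reads
\[
\frac{1}{2\pi i}\oint_{\Gamma} R_V(z)\,dz \;=\; I
\]
for any positively oriented contour $\Gamma$ enclosing $\sigma(\mc H)$. The first formula of the lemma corresponds to taking $\Gamma$ to be a degenerate rectangle with horizontal sides $\Im z = \pm y$; the second arises from collapsing $\Gamma$ onto $\sigma(\mc H_0)$ while leaving small loops around each isolated exceptional value $\zeta_k$.

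For the first identity I exploit the fact, established in the preceding proposition, that $\mc E\setminus\sigma(\mc H_0)$ is finite under the no-embedded-exceptional-value hypothesis, so the $\zeta_k$ lie in a bounded region of $\set C$. I choose $y>\max_k|\Im\zeta_k|$; the contour $\Gamma_y$ formed by the line $\Im z=y$ (traversed rightward to leftward) together with $\Im z=-y$ (leftward to rightward) then encloses $\sigma(\mc H)$, and the operator identity above, read weakly against $f,g$ and unpacked according to orientation, yields the first formula of the lemma, the prefactor $i/(2\pi)$ arising from $-1/(2\pi i)=i/(2\pi)$.

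To justify absolute convergence of the $\lambda$-integral I expand $R_V(z)$ through the identity \eqref{eq_3.64},
\[
R_V(z) = R_0(z) - R_0(z)V_1(I + V_2 R_0(z) V_1)^{-1} V_2 R_0(z).
\]
For $|y|$ large, $\|V_2 R_0(\lambda\pm iy)V_1\|_{L^2\to L^2}$ is uniformly small in $\lambda$, so the middle factor is invertible with uniformly bounded inverse. Combining the explicit kernels \eqref{eq_3.55}--\eqref{eq_3.56} of $R_0$ with Proposition \ref{prop_21} produces enough decay of $\langle(R_V(\lambda+iy)-R_V(\lambda-iy))f,g\rangle$ in $|\lambda|$ to be $L^1_\lambda$, at least on a convenient dense set (for instance, $f,g$ compactly supported and sufficiently regular); the identity then extends by density using the mapping properties of $R_V(\lambda\pm iy)$ derived from \eqref{eq_3.64}--\eqref{eq_3.65}.

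The second formula follows by deforming $\Gamma_y$ continuously down to the contour $\Gamma_{0^+}$, which runs along $\sigma(\mc H_0)\pm i0$ while carrying small circular loops around each $\zeta_k$. By the very definition \eqref{3.111} of $P^0_{\zeta_k}$ in Lemma \ref{pp}, each such loop contributes $\langle P^0_{\zeta_k}f,g\rangle$, while the two long segments combine into $\frac{i}{2\pi}\int_{\sigma(\mc H_0)}\langle(R_V(\lambda+i0)-R_V(\lambda-i0))f,g\rangle\,d\lambda$; the boundary values on $\sigma(\mc H_0)$ exist precisely because $(I+V_2 R_0(\lambda\pm i0)V_1)^{-1}$ remains bounded there under the no-embedded-exceptional-value assumption. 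The main obstacle I anticipate is the joint control required for this deformation: maintaining simultaneously $L^1_\lambda$ integrability of the integrand and its pointwise convergence to the real-axis boundary values as the horizontal sides descend, which is exactly what forces the combined use of the Born series expansion with the integrated dispersive bound \eqref{eqn_3.39} of Proposition \ref{prop_21} rather than a naive $L^2\to L^2$ resolvent estimate.
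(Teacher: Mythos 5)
Your high-level picture --- contour integrals for both identities, linked by a deformation that leaves Riesz projections around the $\zeta_k$ --- is the right one, and the final deformation step matches the paper. But the middle of your argument has two real gaps and one omission. First, you invoke Dunford's $\frac{1}{2\pi i}\oint_\Gamma R_V(z)\,dz = I$ over an \emph{unbounded} degenerate rectangle for an \emph{unbounded} operator $\mc H$; since $R_V(z)$ decays only like $1/|z|$, the operator-valued integral over a full line is not absolutely convergent, and you give no justification for discarding the vertical sides or for convergence of the improper integral. The paper avoids this entirely: for $V\in L^\infty$ and $y>\|V\|_\infty$ it computes the $t$-Fourier transform of the exponentially decaying scalar function $e^{-y|t|}\langle e^{it\mc H}f,g\rangle$, identifies it with $i\langle(R_V(\lambda+iy)-R_V(\lambda-iy))f,g\rangle$, and then reads off the first formula from Fourier inversion at $t=0$. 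That route yields both the normalization and the identity with no appeal to Dunford calculus.

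Second, your absolute-convergence argument reaches for the wrong tool. Proposition \ref{prop_21} is an $L^{6/5,1}\to L^{6,\infty}$ dispersive bound on the time integral of $e^{it\mc H_0}$; it does not control $\lambda$-integrability of resolvent pairings. Pointwise decay is also insufficient --- $\|R_V(\lambda\pm iy)\|$ is only $O(1/|\lambda|)$, which is not $L^1_\lambda$; what makes the integral finite is cancellation in the \emph{difference}. The paper expands $R_V$ via (\ref{3.99}), bounds the free term by the Plancherel-type identity (\ref{3.100}) (giving exactly $\tfrac12(\|f\|_2^2+\|g\|_2^2)$), and controls the remaining terms by smoothing estimates $\int\||V|^{1/2}R_0(\lambda\pm iy)f\|_2^2\,d\lambda\leq C\|f\|_2^2$. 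Finally, your sketch silently assumes $V\in L^{3/2,\infty}_0$ throughout, whereas the paper establishes the identity first for $V\in L^\infty$ (where $\|R_0(\lambda\pm iy)\|_{2\to2}\leq 1/y$ gives Neumann-series invertibility for $y>\|V\|_\infty$) and then passes to the general case by approximating $V$ by bounded potentials $V^n$ in the $L^{3/2,\infty}$ norm, using uniform resolvent bounds away from $\mc E$ and stability of the exceptional set. Your uniform-in-$\lambda$ smallness of $\|V_2R_0(\lambda\pm iy)V_1\|$ for general $V\in L^{3/2,\infty}_0$ is precisely one of the claims that this approximation step is needed to justify.
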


\begin{proof}
Assume at first that $V \in L^{\infty}$ and take $y > \|V\|_{\infty}$. Then
\be
I + V_2 R_0(\lambda \pm iy) V_1
\ee
must be invertible. Indeed, $V_1$ and $V_2$ are bounded $L^2$ operators of norm at most $\|V\|^{1/2}_{\infty}$ and
\be
\|R_0(\lambda \pm iy)\|_{2 \to 2} \leq \frac 1 {4 \pi} \int_{\set R^3} \frac{e^{-\sqrt y|x|}}{|x|} \dd x = 1/y.
\ee
Therefore one can construct the inverse $(I + V_2 R_0(\lambda \pm iy) V_1)^{-1}$ by means of a power series. Thus
\be\begin{aligned}
R_V(\lambda\pm iy) &= R_0(\lambda \pm iy) - R_0(\lambda \pm iy) V R_0(\lambda \pm iy) + \\
&+ R_0(\lambda\pm iy) V_1 (I + V_2 R_0(\lambda\pm iy) V_1)^{-1} V_2 R_0(\lambda \pm iy)
\end{aligned}\ee
is a bounded $L^2$ operator.

By Lemma \ref{lemma_24} $\chi_{t \geq 0} \langle e^{it \mc H} e^{-yt} f, g \rangle$ is an exponentially decaying function of $t$ and its Fourier transform is
\be
\int_0^{\infty} \langle e^{-(y+i\lambda)t} e^{it \mc H} f, g \rangle \dd y = -i\langle R_V(\lambda-iy) f, g \rangle.
\ee
Combining this with the analogous result for the positive side, we see that
\be\lb{3.20}
(\langle e^{it \mc H} e^{-y|t|} f, g \rangle)^{\wedge} = i \langle (R_V(\lambda+iy)-R_V(\lambda-iy)) f, g \rangle.
\ee
The right-hand side is absolutely integrable, because
\be\begin{aligned}\lb{3.99}
R_V(\lambda) &= R_0(\lambda) - R_0(\lambda) V R_0(\lambda) + R_0(\lambda) V_1 (I + V_2 R_0(\lambda) V_1)^{-1} V_2 R_0(\lambda)
\end{aligned}\ee
and
\be\begin{aligned}\lb{3.100}
&\int_{-\infty}^{\infty} |\langle (R_0(\lambda+iy) - R_0(\lambda-iy)) f, g \rangle| \dd \lambda \\
&\leq \int_{-\infty}^{\infty} \frac 1 {2i} (\langle (R_0(\lambda+iy) - R_0(\lambda-iy)) f, f \rangle + \langle (R_0(\lambda+iy) - R_0(\lambda-iy)) g, g \rangle) \dd \lambda \\
&= \frac 1 2 (\|f\|_2^2 + \|g\|_2^2).
\end{aligned}\ee
The remaining terms are absolutely integrable due to smoothing estimates:
\be
\int_{-\infty}^{\infty} \||V|^{1/2} R_0(\lambda \pm iy) f\|_2^2 \dd \lambda \leq C \|f\|_2^2.
\ee

By the Fourier inversion formula, (\ref{3.20}) implies
\be\lb{3.103}
\frac i {2\pi} \int_{\set R} \langle (R_V(\lambda + iy) - R_V(\lambda - iy)) f, g \rangle \dd \lambda = \langle f, g \rangle.
\ee
We then shift the integration contour toward the essential spectrum $\sigma(\mc H_0)$, leaving behind circular contours around the finitely many (by Fredholm's Theorem) eigenvalues. Each contour integral becomes a corresponding Riesz projection.

What is left is $P_c$, the projection on the continuous spectrum. The integral is still absolutely convergent due to (\ref{3.99}), (\ref{3.100}), and smoothing estimates. (\ref{2.93}) follows.

In the beginning we assumed that $V \in L^{\infty}$. Now consider the general case $V \in L^{3/2, \infty}_0$ and a sequence of approximations by bounded potentials $V^n = V_1^n V_2 ^n \in L^{\infty}$, such that $\|V^n - V\|_{L^{3/2, \infty}} \to 0$ as $n \to \infty$. Let $\mc E$ be the exceptional set of $V$. On the set $\{\lambda \mid d(\lambda, \mc E) \geq \epsilon\}$, the norm
\be
\|(I + V_2^n R_0(\lambda) V_1^n)^{-1}\|_{L^2 \to L^2}
\ee
is uniformly bounded for large $n$. For some sufficiently high $n$, then, $\mc E(V_n) \subset \{\lambda \mid d(\lambda, \mc E) < \epsilon\}$. If
\be
y_0 = \sup \{|\Im \lambda| \mid \lambda \in \mc E\},
\ee
then for any $y > y_0$ and sufficiently large $n$
\be\lb{3.104}
\frac i {2\pi} \int_{\set R} \chi(\lambda) \langle (R_{V^n}(\lambda + iy) - R_{V^n}(\lambda - iy)) f, g \rangle \dd \lambda = \langle f, g \rangle.
\ee
Both for $V$ and for $V^n$ the integrals (\ref{3.103}) and (\ref{3.104}) converge absolutely and as $n \to \infty$ (\ref{3.104}) converges to (\ref{3.103}). To see this, subtract the corresponding versions of (\ref{3.99}) from one another and evaluate.

This proves (\ref{2.93}) for potentials $V \in L^{3/2, \infty}_0$, under the spectral assumption concerning the absence of embedded eigenvalues. 
\end{proof}

By Lemma \ref{lemma_31}, it follows that
\be\lb{2.94}
P_c = \chi(\mc H) = \frac i {2\pi} \int_{\sigma(\mc H_0)} \big(R_V(\lambda+i0) - R_V(\lambda-i0)\big) \dd \lambda.
\ee
$P_c$ is bounded on $L^2$, but, since each projection $P_{\zeta_k}^0$ is bounded from $L^{\infty} + L^{6/5, 2}$ to $L^{6, 2} \cap L^1$, the same holds for $P_p = I-P_c$.

Therefore $P_c$ is bounded on $L^{6/5, q}$, $q \leq 2$, and on $L^{6, q}$, $q\geq 2$, as well as on intermediate spaces.





\subsection{Technical lemmas} In order to apply Wiener's Theorem --- Theorem \ref{thm_1.6} ---, we first need to exhibit an element of $K$. By Proposition \ref{prop_21},
\be
T_{V_2, V_1}(t) = V_2 e^{it \mc H_0} V_1
\ee
is a such an element. However, another condition is that the Fourier transform $I - i \widehat T_{V_2, V_1}$ should be invertible at every $\lambda \in \set R$. By Lemma \ref{lemma_22},
\be
I - i \widehat T_{V_2, V_1}(\lambda) = I + V_2 R_0(\lambda) V_1
\ee
and this is invertible for each $\lambda$, except for $\lambda \in \mc E$. This suffices only if there are no exceptional values; otherwise we need a correction.

By (\ref{eq_3.64}), we see that the problem is that $R_V$ is not uniformly bounded in the lower half-plane in $\mc L(L^{6/5, 2}, L^{6, 2})$. The solution will be to replace $R_V$ with $R_V P_c - (\lambda + i \delta)^{-1} P_p$, which is uniformly bounded in the lower half-plane when $\delta>0$.

Our construction involves using, instead of $V_1$ and $V_2$, the following modified versions:


\begin{lemma}\lb{lemma32}
Consider $V \in L^{3/2, \infty}_0(\set R^3)$ and $\mc H = \mc H_0 + V$ as in (\ref{eq_3.2}) or (\ref{3.2}) such that $\mc H$ has no exceptional values embedded in $\sigma(\mc H_0)$. Then, for any $\delta \in \set C$, there exists a decomposition
\be
V - P_p (\mc H - i\delta) = \tilde V_1 \tilde V_2,
\ee
where $P_p = I - P_c$, such that $\tilde V_1$, $\tilde V_2^* \in \mc L(L^2, L^{6/5, 2})$, and the Fourier transform of $I-iT_{\tilde V_2, \tilde V_1}$ is uniformly invertible in the lower half-plane:
\be\lb{2.108}
\sup_{\Im \lambda \leq 0} \|I - i \widehat T_{\tilde V_2, \tilde V_1}(\lambda)\|_{\mc L(L^2, L^2)} < \infty.
\ee
Here
\be
(T_{\tilde V_2, \tilde V_1}F)(t) = \int_{-\infty}^t \tilde V_2 e^{i(t-s)\mc H_0} \tilde V_1 F(s) \dd s.
\ee
Furthermore, $\tilde V_1$ and $\tilde V_2^*$ can be approximated in the $\mc L(L^2, L^{6/5, 2})$ norm by operators that are bounded from $L^2$ to $\langle x \rangle^{-N} L^2$, for any fixed $N$.

Finally, if $V \in L^{3/2, 1}$ then $L^{6/5, 2}$ can be replaced by $L^{6/5, 1}$.
\end{lemma}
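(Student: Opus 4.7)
My plan is to combine the standard Birman--Schwinger splitting $V=V_1V_2$ from (\ref{1.35})/(\ref{1.36}) with an explicit finite-rank treatment of $P_p\mc H$ through an augmented intermediate Hilbert space. By Lemma \ref{pp} and the no-embedded-exceptional-values hypothesis, $\Ran P_p$ is finite-dimensional; pick a biorthonormal dual pair $\{\psi_m\}_{m=1}^N\subset\Ran P_p$ and $\{\phi_m\}_{m=1}^N\subset\Ran P_p^*$ of generalized eigenfunctions of $\mc H$ and $\mc H^*$ satisfying $\langle\psi_m,\phi_n\rangle=\delta_{mn}$, all exponentially decaying since the exceptional values are isolated outside $\sigma(\mc H_0)$. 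Take $\tilde H=L^2\oplus\set C^N\oplus\set C^N$ and set
\[F_2 f=\bigl(V_2f,\,(\langle f,\mc H^*\phi_k\rangle)_k,\,(\langle f,\phi_k\rangle)_k\bigr),\qquad F_1(g,c,d)=V_1g-\sum_{k=1}^N c_k\psi_k,\]
so $F_1F_2=V_1V_2-\sum_k\psi_k\langle\cdot,\mc H^*\phi_k\rangle=V-P_p\mc H$. Hölder in Lorentz spaces ($L^{3,\infty}\cdot L^2\subset L^{6/5,2}$; or $L^{3,1}\cdot L^2\subset L^{6/5,1}$ when $V\in L^{3/2,1}$) gives $F_1,F_2^*\in\mc L(\tilde H,L^{6/5,2})$ (respectively $L^{6/5,1}$), which also yields the final refinement. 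Critically, the $(\langle\cdot,\phi_k\rangle)_k$-slot that appears in $F_2$ but not in $F_1$ is a diagnostic device inserted to handle the threshold $\lambda=0$.

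The core step is invertibility of $\widehat{I-iT_{F_2,F_1}}(\lambda)=I-iF_2R_0(\lambda)F_1$ on $\tilde H$ throughout the closed lower half-plane. Compactness of $F_2R_0(\lambda)F_1$ (inherited from compactness of $V_2R_0(\lambda)V_1$ for $V\in L^{3/2,\infty}_0$, with the rank-$2N$ correction trivially compact) together with the analytic Fredholm theorem reduces invertibility to injectivity at each $\lambda$. A direct unpacking: if $(g,c,d)\in\Ker(I+F_2R_0(\lambda)F_1)$ and $h:=R_0(\lambda)(V_1g-\sum c_k\psi_k)$, the three components force $g=-V_2h$, $c_k=-\langle h,\mc H^*\phi_k\rangle$, $d_k=-\langle h,\phi_k\rangle$, and substituting back produces $(\mc HP_c-\lambda)h=0$. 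Applying $P_p$ yields $\lambda P_ph=0$, while applying $P_c$ shows that $P_ch$ is an eigenvector of $\mc H$ in $\Ran P_c$; the no-embedded-exceptional-values hypothesis plus the capture of all eigenvectors of $\mc H$ in $\Ran P_p$ force $P_ch=0$. For $\lambda\ne 0$ one then also has $P_ph=0$, hence $h=0$; for $\lambda=0$ one has $h=P_ph=\sum\alpha_m\psi_m$, and biorthonormality gives $d_k=-\alpha_k$, so the kernel condition $d=0$ forces $\alpha=0$ and $h=0$. In every case $(g,c,d)=0$.

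The approximation claim follows by truncating $V_j$ to balls $|x|\le R$, using $V\in L^{3/2,\infty}_0$ to justify convergence in Lorentz norm by bounded compactly supported functions; the truncated multiplication operators are bounded from $L^2$ into $\langle x\rangle^{-N_0}L^2$ for every $N_0$ and converge to $V_j$ in $\mc L(L^2,L^{6/5,2})$, while the finite-rank pieces already take values in such weighted spaces by exponential decay of eigenfunctions. The main obstacle is the injectivity analysis at the former exceptional values---especially at $\lambda=0$ in the matrix setting, where $0$ may belong to $\sigma_p(\mc H)$---and the role of the auxiliary $(\langle\cdot,\phi_k\rangle)_k$-slot in $F_2$ is decisive: it bypasses the potential noninvertibility of $\mc H|_{\Ran P_p}$ and extracts the eigencoefficients of $h$ directly via biorthonormality, killing what would otherwise be a genuine $N$-dimensional kernel.
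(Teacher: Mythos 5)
Your construction is genuinely different from the paper's. The paper takes the polar decomposition $P_p=UA$ with $A=(P_p^*P_p)^{1/2}$, sets $F_2(V)=V_2+A$ and $F_1(V)=V_1G_1-G_2$ via the operator quotients $G_1=V_2/(V_2+A)$, $G_2=(\mathcal H P_p)/(V_2+A)$, keeping the intermediate Hilbert space equal to $L^2$, and then exhibits a candidate inverse $I-F_2R_VP_cF_1$. You instead augment the Hilbert space to $L^2\oplus\mathbb C^N\oplus\mathbb C^N$ and split the rank-$N$ correction into scalar slots, deducing invertibility from the Fredholm alternative plus an injectivity computation. This is a legitimate alternative framework for the factorization and for the boundedness and approximation claims.

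The injectivity step at $\lambda=0$, however, is wrong, and the device you single out as ``decisive'' is inert. Since $F_1(g,c,d)=V_1g-\sum_k c_k\psi_k$ does not read the $d$-slot, the operator $I+F_2R_0(\lambda)F_1$ on $L^2\oplus\mathbb C^N\oplus\mathbb C^N$ is block lower-triangular with last diagonal block $I$; its invertibility is equivalent to that of the $2\times2$ upper-left corner, which does not see $d$ at all. Concretely, the three components of the kernel condition $(I+F_2R_0F_1)(g,c,d)=0$ with $h:=R_0(\lambda)F_1(g,c,d)$ are $g=-V_2h$, $c_k=-\langle h,\mathcal H^*\phi_k\rangle$, and $d_k=-\langle h,\phi_k\rangle$. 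The third equation \emph{determines} $d$ in terms of $h$; it does not assert $d=0$, and your phrase ``the kernel condition $d=0$'' has no basis. Indeed, pick any nonzero $h=P_ph=\sum_m\alpha_m\psi_m\in\mathrm{Ran}\,P_p$ and define $g,c,d$ by the three formulas above. Then $F_1(g,c,d)=-Vh+\sum_k\langle h,\mathcal H^*\phi_k\rangle\psi_k=-Vh+P_p\mathcal Hh=-Vh+\mathcal Hh=\mathcal H_0h$, so $R_0(0)F_1(g,c,d)=h$ and all three slot equations are satisfied: $(g,c,d)$ is a genuine nonzero kernel vector of $I+F_2R_0(0)F_1$. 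The $d$-slot does not excise this $N$-dimensional kernel; it merely records $d_k=-\alpha_k$ passively. To the extent your argument relies on $\lambda=0$ being handled by that slot, the proof is broken whenever $P_p\neq0$. (If you wanted the extra slot to do work you would need $F_1$ to depend on $d$, but any such dependence would change $F_1F_2$ and destroy the required identity $F_1F_2=V-P_p\mathcal H$.)

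A secondary point worth flagging: the paper's own justification of invertibility rests on the identity $(I+F_2R_0F_1)^{-1}=I-F_2R_VP_cF_1$ together with the uniform bound on $R_VP_c$, which is not a Fredholm-injectivity argument at all. Your approach of ``compact perturbation plus injectivity'' is a different route, and the place where it needs extra input --- precisely the point $\lambda=0$, where $P_c\mathcal H$ has the nontrivial kernel $\mathrm{Ran}\,P_p$ --- is exactly where your proposal falls short.
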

Note that $\tilde V_1$ and $\tilde V_2$ are the same as $V_1$, respectively $V_2$, up to exponentially small perturbations; however, while $V_1$ and $V_2$ are functions, $\tilde V_1$ and $\tilde V_2$ are operators.

Our interest in them is explained by (\ref{2.108}), which is not guaranteed for  $V_1$ and $V_2$ by themselves, as explained above.


\begin{proof}[Proof of Lemma \ref{lemma32}]
Consider a potential $V$ such that $\mc H$ has no exceptional values embedded in $\sigma(\mc H_0)$. Having finite rank, $P_p$ has the form
\be
P_p f = \sum_{k=1}^n \langle f, f_k \rangle g_k,
\ee
where $f_k$ and $g_k$ belong to $\langle \dl \rangle^{-2} L^{6/5, 2}$. 
Take the standard polar decomposition of $P_p$, with respect to $L^2$:
\be
P_p = U A,
\ee
where $A = (P_p^* P_p)^{1/2} \geq 0$ is a nonnegative $L^2$ operator of finite rank. More specifically, $A$ has the form
\be
A f = \sum_{j, k=1}^n a_{jk} \langle f, f_k \rangle f_j.
\ee
$A$ maps to the span of $f_k$, while $U$ is a partial $L^2$ isometry defined on the range of $A$. $U$ maps the span of $f_k$ to the span of $g_k$ and can be extended by zero on the orthogonal complement:
\be\begin{aligned}
U f &= \sum_{j, k=1}^n u_{jk} \langle f, f_k \rangle g_j.
\end{aligned}\ee
From these explicit forms we see that both $U$ and $A$ are bounded operators from $\langle \dl \rangle^2 L^{6, 2}$, which includes $L^2$, to $\langle \dl \rangle^{-2} L^{6/5, 2}$.


Also let $V = V_1 V_2$, where $V_2 \geq 0$ is a nonnegative operator on $L^2$, meaning $\langle f, V_2 f \rangle \geq 0$ for every $f \in \Dom(V_2)$, and $V_1$, $V_2 \in L^{3, \infty}$.

Then, define the bounded $L^2$ operators $G_1 = V_2/(V_2 + A)$ and $G_2 = (\mc H + i\delta) P_p/(V_2 + A)$, initially on $\Ran(V_2 + A)$, by setting
\be
G_1((V_2+A)f) = V_2 f,\ G_2((V_2+A)f) = (\mc H +\delta) P_p f
\ee
and extending them by continuity to $\ov{\Ran(V_2 + A)}$. On the orthogonal complement
\be
\Ran(V_2 + A)^{\perp}=\{f \mid P_p f =0,\ V f = 0\}
\ee
we simply set $G_1=G_2=0$. We then make the construction
\be\begin{aligned}
V - P_p (\mc H-i\delta) &= \tilde V_1 \tilde V_2, \\
\tilde V_2 &= V_2 + A, \\
\tilde V_1 &= V_1 G_1 - G_2.
\end{aligned}\ee
Upon inspection, $\tilde V_1$ and $\tilde V_2^*$ are bounded from $L^2$ to $L^{6/5, 2}$.

We next prove that $\tilde V_1$ and $\tilde V_2$ can be approximated by operators in better classes, as claimed. Firstly, consider a family of smooth compactly supported functions $\chi_n$ such that $0 \leq \chi_n \leq 1$ and $\chi_n \to 1$ uniformly on compact sets as $n \to \infty$. Let
\be\begin{aligned}\lb{eq_3.177}
F_1^n &= \chi_n V_1 G_1 - \chi_n G_2 \chi_n,\\
F_2^n &= \chi_n V_2 + \chi_n A \chi_n.
\end{aligned}\ee

It is plain that $F_1^n$ and $(F_2^n)^*$ take $L^2$ to $\langle x \rangle^{-N} L^2$. They also approximate $\tilde V_1$ and $\tilde V_2^*$ in $\mc L(L^2, L^{6/5, 2})$. Indeed, as $n \to \infty$
\be
\|\chi_n V_j - V_j\|_{\mc L(L^2, L^{6/5, 2})} \to 0
\ee
because $V$ decays at infinity and
\be\begin{aligned}
\|\chi_n G_2 \chi_n - G_2\|_{\mc L(L^2, L^2)} \to 0,\\
\|\chi_n A \chi_n - A\|_{\mc L(L^2, L^{6/5, 2})} \to 0,
\end{aligned}\ee
because $G_2$ and $A$ have finite rank.





Finally, we show that the Fourier transform $I-i\widehat{T}_{\tilde V_2, \tilde V_1}(\lambda)$ is always invertible. Lemma \ref{lemma_22} implies that
\be
I-i\widehat{T}_{\tilde V_2, \tilde V_1}(\lambda) = I + \tilde V_2 R_0(\lambda) \tilde V_1.
\ee
As we see by (\ref{2.97}),
\be\lb{2.106}
(I + \tilde V_2 R_0 \tilde V_1)^{-1} = I - \tilde V_2 (R_V P_c - (\lambda - i\delta)^{-1} P_p) \tilde V_1.
\ee
By (\ref{eq_3.64}) and (\ref{eq_3.65}), $R_V(\lambda)$ is bounded from $L^{6/5, 2}$ to $L^{6, 2}$ if and only if $\lambda$ is not an exceptional value. Our assumption regarding the absence of embedded exceptional values implies that $R_V(\lambda)$ is uniformly bounded for $\lambda \in \sigma(\mc H_0)$. Furthermore, $R_V$ is uniformly bounded away from the finitely many exceptional values.

Using the representation formula (\ref{2.94}) for $P_c$, for $f$, $g \in L^2$
\be\begin{aligned}
\langle R_V(\lambda_0) P_c f, g \rangle &= \frac 1 {2 \pi i} \int_{\sigma(\mc H_0)} \big\langle R_V(\lambda_0) (R_V(\lambda-i0) - R_V(\lambda+i0)) f, g \big\rangle \dd \lambda \\
&= \frac 1 {2 \pi i} \int_{\sigma(\mc H_0)} \Big\langle \frac 1 {\lambda-\lambda_0} (R_V(\lambda-i0) - R_V(\lambda+i0)) f, g \Big\rangle \dd \lambda
\end{aligned}\ee
and the integral converges absolutely. Here we used the resolvent identity: for all $\lambda_1$, $\lambda_2$ in the resolvent set,
$$
R_V(\lambda_1) - R_V(\lambda_2) = (\lambda_1-\lambda_2) R_V(\lambda_1) R_V(\lambda_2).
$$
For some fixed $\lambda_1 \not \in \sigma(\mc H_0)$, $R_V(\lambda_1)$ is bounded from $L^{6/5, 2}$ to $L^{6, 2}$. Then, for any other value $\lambda_2 \not \in \sigma(\mc H_0)$, one has that
\be\begin{aligned}
&\langle R_V(\lambda_1) P_c f, g \rangle - \langle R_V(\lambda_2) P_c f, g \rangle = \\
&= \frac 1 {2 \pi i} \int_{\sigma(\mc H_0)} \Big\langle \frac {\lambda_2 - \lambda_1} {(\lambda-\lambda_1)(\lambda-\lambda_2)} (R_V(\lambda-i0) - R_V(\lambda+i0)) f, g \Big\rangle \dd \lambda.
\end{aligned}\ee
Since the integrand decays like $\lambda^{-2}$, it follows that
\be\lb{lim_ap}
\sup_{\lambda \in \set C} \|R_V(\lambda) P_c\|_{L^{6/5, 2} \to L^{6, 2}} < \infty.
\ee
%

By (\ref{2.106}), $(I + \tilde V_2 R_0(\lambda) \tilde V_1)^{-1}$ is then uniformly bounded in the lower half-plane, for $\delta>0$, and (\ref{2.108}) follows.
\end{proof}

As we replace $V_1$ and $V_2$ by $\tilde V_1$, respectively $\tilde V_2$, we use the following identities:
\begin{lemma} For $\delta>0$, consider the decomposition of Lemma \ref{lemma32}:
\be
V - P_p (\mc H + i\delta) = \tilde V_1 \tilde V_2.
\ee
Let $\tilde R_V(\lambda) = (P_c \mc H - i\delta P_p - \lambda)^{-1}$.
Then
\be\lb{2.127}
\tilde R_V(\lambda) = R_V(\lambda) P_c - (\lambda - i \delta)^{-1} P_p
\ee
and the following identities hold:
\begin{align}
\lb{2.97}
\tilde R_V(\lambda) &= R_0(\lambda) - R_0(\lambda) \tilde V_1 (I + \tilde V_2 R_0(\lambda) \tilde V_1)^{-1} \tilde V_2 R_0(\lambda), \\
\lb{3.65'}
(I + \tilde V_2 R_0(\lambda) \tilde V_1)^{-1} &= I - \tilde V_2 \tilde R_V(\lambda) \tilde V_1.
\end{align}
\end{lemma}
\begin{proof} Direct computation shows that
\be\lb{2.109}
(P_c \mc H + i\delta P_p - \lambda) (R_V(\lambda) P_c - (\lambda - i \delta)^{-1} P_p) = I.
\ee
We use the fact that $P_c^2 = P_c$, $P_p^2 = P_p$, $P_c P_p = P_p P_c = 0$, and everything commutes in (\ref{2.109}).

Then, note that
\be
P_c \mc H - i\delta P_p = \mc H_0 + V - P_p (\mc H + i\delta) = \mc H_0 + \tilde V_1 \tilde V_2.
\ee
We write the resolvent identity, for this case, as
\be
\tilde R_V(\lambda) = R_0(\lambda) - R_0(\lambda) \tilde V_1 \tilde V_2 \tilde R_V(\lambda) = R_0(\lambda) - \tilde R_V(\lambda) \tilde V_1 \tilde V_2 R_0(\lambda),
\ee
which implies (\ref{2.97}) and (\ref{3.65'}).
\end{proof}

We prefer to study the modified evolution $e^{it \mc H P_c - \delta t P_p}$, rather than $e^{it \mc H}$ or $e^{it\mc H} P_c$. We define this modified evolution simply by means of the equation
\be
i \partial_t Z + \mc H P_c Z + i \delta P_p Z = 0,\ Z(0)=Z_0,
\ee
and take $e^{it \mc H P_c - \delta t P_p} Z_0 := Z(t)$.

\begin{lemma}\lb{lem214} Assume $\|V\|_{\infty} < \infty$; then
\be
e^{it \mc H P_c - \delta t P_p} = e^{it \mc H} P_c + e^{-\delta t} P_p.
\ee
For $\Im \lambda < \|V\|_{\infty}$,
\be\lb{2.135}
(e^{it \mc H P_c - \delta t P_p})^{\wedge}(\lambda) = \tilde R_V(\lambda).
\ee
\end{lemma}
\begin{proof}
By Lemma \ref{lemma_24}, this modified evolution is $L^2$-bounded when $V \in L^{\infty}$:
\be\lb{2.136}
\|e^{it \mc H P_c - \delta t P_p}\|_{\mc L(L^2, L^2)} \leq C e^{t \|V\|_{L^{\infty}_t}}.
\ee
Since $\mc H$, $P_c$, and $P_p$ all commute, we can also rewrite it as
\be
e^{it \mc H P_c - \delta t P_p} = e^{it \mc H P_c - \delta t P_p} P_c + e^{it \mc H P_c - \delta t P_p} P_p = e^{it \mc H} P_c + e^{-\delta t} P_p.
\ee
(\ref{2.136}) allows one to define the Fourier transform for $\Im \lambda < - \|V\|_{\infty}$. A direct computation then shows (\ref{2.135}).
\end{proof}

\subsection{Time-independent potentials}


\begin{proof}[Proof of Theorem \ref{theorem_26}]
Replace the original equation (\ref{1.1}) by the following:
\be\lb{2.152}
i \partial_t Z + \mc H P_c Z + i\delta P_p Z = F.
\ee
This equation is fulfilled by $P_c Z$, $P_c F$, and $P_c Z(0)$ (as $Z$, $F$, and $Z(0)$ respectively).

By Duhamel's formula, since
$$
\mc H P_c + i \delta P_p = \mc H_0 + V - P_p (\mc H - i \delta) = \mc H_0 + \tilde V_1 \tilde V_2,
$$
a solution of (\ref{2.152}) satisfies
\be\lb{2.153}
Z(t) = e^{it\mc H_0} Z(0) - i \int_0^t e^{i(t-s) \mc H_0} F(s) \dd s + i \int_0^t e^{i(t-s)\mc H_0} (V - P_p (\mc H - i \delta)) Z(s) \dd s.
\ee

Assume $V \in L^{\infty}$. Let $F$, $G \in L^{\infty}_t (L^1_x \cap L^2_x)$ have compact support in $t$ and consider the forward time evolution
\be
(T_V F)(t) = \int_{t>s} e^{i(t-s) \mc H P_c - \delta (t-s) P_p} F(s) \dd s. 
\ee
By Lemma \ref{lem214},
\be
\widehat {T_V F}(\lambda) = i \tilde R_V(\lambda) \widehat F(\lambda).
\ee
For $y > \|V\|_{\infty}$, both $e^{-yt} (T_V F)(t)$ and $e^{yt} G(t)$ belong to $L^2_{t, x}$. Taking the Fourier transform in $t$, by Plancherel's theorem
\be\begin{aligned}\lb{2.114}
\int_{\set R} \langle (T_V F)(t),  G(t) \rangle \dd t &= \frac 1 {2\pi} \int_{\set R} \big\langle \big(e^{-yt} (T_V F)(t) \big)^{\wedge}, \big(e^{yt} G(t)\big)^{\wedge} \big\rangle \dd \lambda \\
&= \frac 1 {2 \pi i} \int_{\set R} \big\langle \tilde R_V(\lambda - iy) \widehat F(\lambda-iy), \widehat {G(-t)}(\lambda-iy) \big \rangle \dd \lambda.
\end{aligned}\ee
Here $\langle \cdot, \cdot \rangle$ is the real dot product.

Following (\ref{2.97}) and (\ref{3.65'}), we express $\tilde R_V$ in (\ref{2.114}) as
\be\lb{3.42}\begin{aligned}
\tilde R_V &= R_0 - R_0 \tilde V_1 \tilde V_2 R_0 + R_0 \tilde V_1 \big(\tilde V_2 \tilde R_V \tilde V_1\big) \tilde V_2 R_0.
\end{aligned}\ee
The first term represents the free Schr\"{o}dinger evolution:
\be
\frac 1 {2 \pi i} \int_{\set R} \big\langle R_0(\lambda - iy) \widehat F(\lambda-iy), \widehat {G(-t)}(\lambda-iy) \big \rangle \dd \lambda.
\ee
It is bounded by virtue of the endpoint Strichartz estimates of \cite{tao}.

Likewise, by means of Strichartz or smoothing estimates for the free Schr\"{o}dinger equation, one obtains that
\be\begin{aligned}\lb{3.44}
\big\|\tilde V_2 R_0(\lambda-i0) \widehat F(\lambda-i0)\big\|_{L^2_{\lambda, x}} \leq C \|F\|_{L^2_t L^{6/5, 2}_x},\\
\big\|\tilde V_1^* R_0(\lambda+i0) \widehat G(\lambda-i0)\big\|_{L^2_{\lambda, x}} \leq C \|G\|_{L^2_t L^{6/5, 2}_x}.
\end{aligned}\ee
The same holds for the integral along any other horizontal line, meaning that for every $y \in \set R$, $\tilde V_2 R_0(\lambda+iy) \widehat F(\lambda+iy)$ and $\tilde V_1^* R_0(\lambda + iy) \widehat G(\lambda+iy)$ are in $L^2_{\lambda, x}$. This allows shifting the integration line toward the real axis.

Taking into account the fact that, by (\ref{2.127}) and (\ref{lim_ap}),
\be
\sup_{\Im \lambda \leq 0} \|\tilde V_2 \tilde R_V \tilde V_1\|_{\mc L(L^2_{t, x}, L^2_{t, x})} < \infty
\ee
and following (\ref{3.42}) and (\ref{3.44}), we obtain
\be\begin{aligned}
\int_{\set R} \langle (T F)(t),  G(t) \rangle \dd t &\leq C \|F\|_{L^2_t L^{6/5, 2}_x} \|G\|_{L^2_t L^{6/5, 2}_x}.
\end{aligned}\ee
Then, we remove our previous assumption that $F$ and $G$ should have compact support. This establishes the estimate
\be
\bigg\|\int_{t>s} e^{i(t-s) \mc H P_c - \delta(t-s) P_p} F(s) \dd s \bigg\|_{L^2_t L^{6, 2}_x} \leq C \|F\|_{L^2_t L^{6/5, 2}_x}.
\ee
By taking the $P_c$ projection, we obtain the more usual Strichartz retarded estimate. Then, using Duhamel's formula (\ref{2.153}), we obtain (\ref{1.45}).

Given $V \in L^{3/2, \infty}_0$, we approximate it in the $L^{3/2, \infty}$ norm by $L^{\infty}$ potentials whose exceptional sets are still disjoint from $\sigma(\mc H_0)$. Since the conclusion stands for each approximation, uniformly, we pass to the limit and it also holds for $V$ itself.

Next, we prove (\ref{1.46}). Assume $V \in L^{3/2, 1}$. With $\tilde V_1$ and $\tilde V_2$ as in Lemma \ref{lemma32} (meaning $\tilde V_1$ and $\tilde V_2^*$ are bounded from $L^2$ to $L^{6/5, 1}$), let
\be
(T_{\tilde V_2, \tilde V_1} F)(t) = \int_{-\infty}^t \tilde V_2 e^{i(t-s)\mc H_0} \tilde V_1 F(s) \dd s.
\ee
We need to establish that $T_{\tilde V_1, \tilde V_2} \in K$, where $K$ is the Wiener algebra of Definition \ref{def_k} for $H = L^2_x$. By Proposition \ref{prop_21} and Minkowski's inequality,
\be
T_{I, I} F(t) = \int_{-\infty}^t e^{i(t-s) \mc H_0} F(s) \dd s
\ee
takes $L^1_t L^{6, 1}_x$ to $L^1_t L^{6/5, \infty}_x$.
Therefore $T_{\tilde V_2, \tilde V_1}$
takes $L^1_t L^2_x$ to itself and, for the Hilbert space $H=L^2_x$, belongs to the algebra $K$. Following Lemma \ref{lemma_22}, the Fourier transform of $T_{\tilde V_2, \tilde V_1}$ is given by
\be
\widehat T_{\tilde V_2, \tilde V_1}(\lambda) = i\tilde V_2 R_0(\lambda) \tilde V_1.
\ee
By Lemma \ref{lemma32}, $I - i \widehat T_{\tilde V_2, \tilde V_1}(\lambda)$ is invertible for $\Im \lambda \leq 0$.

Considered as an operator from $L^p$, $p<6/5$, to its dual, $T_{I, I}(t)$ decays at infinity faster than $|t|^{-1}$ in norm:
\be
\|T_{I, I}(t)\|_{\mc L(L^p, L^{p'})} \leq C |t|^{3/2-3/p}
\ee
and $3/2-3/p < -1$ for $p<6/5$. Thus, if $\tilde V_1$ and $\tilde V_2^*$ were in $\mc L(L^2, L^p)$,
\be
\|\chi_{|t|>R} T_{\tilde V_2, \tilde V_1}\|_K \leq \int_t^{\infty} \|T_{\tilde V_2, \tilde V_1}(t)\|_{\mc L(L^2, L^2)} \leq C |t|^{5/2-3/p} \to 0
\ee
as $t \to \infty$. By approximating $\tilde V_1$ and $\tilde V_2$ in the operator norm, it follows that $T_{\tilde V_2, \tilde V_1}$ belongs to the subalgebra $K_0 \subset K$ of kernels that decay at infinity.

Moreover, consider approximations of $\tilde V_1$ and $\tilde V_2$, call them $F_1^n$ and $F_2^n$, as given by Lemma \ref{lemma32}. Then for each $t$
\be
\|(F_2^n e^{-i(t+\epsilon)\Delta} F_1^n - F_2^n e^{-it\Delta} F_1^n)f\|_2 \leq C \epsilon^{1/4} \|e^{-it\Delta} F_1^n f\|_{\dot H^{1/2}_{loc}}
\ee
and therefore
\be\begin{aligned}
&\int_{-t}^t \|(F_2^n e^{i(t+\epsilon)\mc H_0} F_1^n - F_2^n e^{it\mc H_0} F_1^n)f\|_2 \dd t \\
&\leq C \epsilon^{1/4} t^{1/2} \Big(\int_{-t}^t \|e^{-it\Delta} F_1^n f\|_{\dot H^{1/2}_{loc}}^2 \dd t\Big)^{1/2}\\
&\leq C \epsilon^{1/4} t^{1/2} \|f\|_2.
\lb{1.352}
\end{aligned}\ee
Since $T_{F_2^n, F_1^n}$ decays at infinity, (\ref{1.352}) implies that $T_{F_2^n, F_1^n}$ is equicontinuous. By passing to the limit, we find that the same holds for $T_{\tilde V_2, \tilde V_1}$.

Therefore $I-iT_{\tilde V_2, \tilde V_1}$ satisfies all the hypotheses of Theorem \ref{thm7}, with respect to the Hilbert space $L^2$ and the algebra $K$ of Definition \ref{def_k}. 
The inverse $(I-iT_{\tilde V_2, \tilde V_1})^{-1}$ then belongs to $K$ and is supported on $(-\infty, 0]$ in $t$ by Lemma ~\ref{lemma8}.

Let $T_{I, \tilde V_1}$ and $T_{\tilde V_2, I}$ be respectively given by
\be
(T_{I, \tilde V_1}F)(t) = \int_{-\infty}^t e^{i(t-s)\mc H_0} \tilde V_1 F(s) \dd s
\ee
and
\be
(T_{\tilde V_2, I} F)(t) = \int_{-\infty}^t \tilde V_2 e^{i(t-s)\mc H_0} F(s) \dd s.
\ee
Given the decomposition $V - P_p (\mc H + i \delta) = \tilde V_1 \tilde V_2$, (\ref{2.153}) becomes
\be\begin{aligned}
\tilde V_2 Z(t) &= \tilde V_2 \int_0^t e^{i(t-s) \mc H_0} (-iF(s) + \delta_{s=0} Z(0)) \dd s + i \int_0^t (\tilde V_2 e^{i(t-s)\mc H_0} \tilde V_1) \tilde V_2 Z(s) \dd s.
\end{aligned}\ee
Then
\be\lb{3.143}
\big(I - i T_{\tilde V_2, \tilde V_1}\big) \tilde V_2 Z = T_{\tilde V_2, 1} (-iF + \delta_{t=0} Z(0)).
\ee
Consequently, we can rewrite (\ref{2.153}) as
\be\begin{aligned}
Z &= T_{I, I} (-iF + \delta_{t=0} Z(0)) + T_{I, \tilde V_1} (I-iT_{\tilde V_2,  \tilde V_1})^{-1} T_{\tilde V_2, I} (-iF + \delta_{t=0} Z(0)).
\lb{3.51}
\end{aligned}\ee

By Proposition \ref{prop_21},
\be
\|T_{\tilde V_2, 1}(-iF + \delta_{t=0} Z(0))\|_{L^1_t L^2_x} \leq C (\|F\|_{L^1_t L^{6/5, 1}_x} + \|Z(0)\|_{L^{6/5, 1}_x})
\ee
The convolution kernel $(I-iT_{\tilde V_2, \tilde V_1})^{-1}$ then takes $L^1_t L^2_x$ into itself again, while the last operator $T_{1, \tilde V_1}$ takes $L^1_t L^{6/5, 1}_x$ into $L^1_t L^{6, \infty}_x$. We obtain that
$P_c Z \in L^1_t L^{6, \infty}_x$, as desired.
\end{proof}

For completeness, we also sketch the proof of Proposition \ref{cor1.5}.
\begin{proof}[Proof of Proposition \ref{cor1.5}]
If $V \in L^{3/2, 1}$, then (\ref{1.12}) follows directly by interpolation between (\ref{1.45}) and (\ref{1.46}), which are both satisfied. If $V \in L^{3/2}$, then we use Theorem \ref{thm6/5}.

We need to show in this case that $T_{V_2, V_1}$ belongs to $K_{6/5}$. By complex interpolation between
$$
\|T_{V_2,V_1} F\|_{L^1_t L^2_x} \leq C \|V_1\|_{L^{3, 2}} \|V_2\|_{L^{3, 2}} \|F\|_{L^1_t L^2_x},
$$
which follows by Proposition \ref{prop_21}, and
$$
\|T_{V_2,V_1} F\|_{L^2_{t, x}} \leq C \|V_1\|_{L^{3, \infty}} \|V_2\|_{L^{3, \infty}} \|F\|_{L^2_{t, x}},
$$
which follows by the endpoint Strichartz estimate for $e^{-it\Delta}$, we obtain that
$$
\|T_{V_2,V_1} F\|_{L^{6/5}_t L^2_x} \leq C \|V_1\|_{L^{3}} \|V_2\|_{L^{3}} \|F\|_{L^{6/5}_t L^2_x},
$$
so, by definition, $T_{V_2,V_1} \in K_{6/5}$.

Since $T_{\tilde V_2, \tilde V_1} - T_{V_2, V_1}$ has finite rank, a similar interpolation argument shows that $T_{\tilde V_2, \tilde V_1} \in K_{6/5}$.

From this point, the proof follows that of (\ref{1.46}).
\end{proof}


\subsection{The time-dependent case}\lb{sec3.7}
\begin{proof}[Proof of Corollary \ref{cor_1.5}]
Introduce an auxiliary function $Z_1 = P_c Z_1$ (supported on the continuous spectrum) and write (\ref{1.12}) in the form
\be\lb{1.12'}
i \partial_t Z + \mc H Z + \tilde V(t, x) P_c Z_1= F,\ Z(0) \text{ given}.
\ee
By the Strichartz estimate (\ref{1.45}),
\be\begin{aligned}\lb{2.160}
\|P_c Z\|_{L^{\infty}_t L^2_x \cap L^2_t L^{6, 2}_x} &\leq C \Big(\|Z(0)\|_2 + \|F\|_{L^1_t L^2_x + L^2_t L^{6/5, 2}_x} + \\
&+\|\tilde V\|_{L^{\infty}_t L^{3/2, \infty}_x} \|P_c Z_1\|_{L^2_t L^{6, 2}_x} \Big).
\end{aligned}\ee
For two different values of $Z_1$, call them $Z_1^1$ and $Z_1^2$, we subtract the corresponding copies of (\ref{1.12'}) from one another. The contributions of $Z(0)$ and $F$ cancel, so we obtain
\be\begin{aligned}
\|P_c Z^1 - P_c Z^2\|_{L^{\infty}_t L^2_x \cap L^2_t L^{6, 2}_x} \leq C \|\tilde V\|_{L^{\infty}_t L^{3/2, \infty}_x} \|P_c Z_1^1 - P_c Z_1^2\|_{L^2_t L^{6, 2}_x}.
\end{aligned}\ee
Thus, if $\|\tilde V\|_{L^{\infty}_t L^{3/2, 2}_x}$ is sufficiently small, the map that associates the solution $P_c Z$ to the auxiliary function $P_c Z_1$ is a contraction inside some ball of large radius in $L^2_t L^{6, 2}_x$. The fixed point of this contraction is a solution to (\ref{1.12}) and (\ref{2.160}) then implies (\ref{1.13}).

The proof of (\ref{1.14}) is entirely analogous.
\end{proof}


Consider the family of isometries $U$ given by (\ref{1.20}) or (\ref{1.21}).
In the subsequent lemma we encapsulate the properties of $U$ that we actually use in our study of (\ref{3.157}) and (\ref{3.158}).

\begin{lemma}\lb{lem_32} Let $U(t)$ be defined by (\ref{1.20}) or (\ref{1.21}). Then $U$ possesses properties P1--P4 listed in Theorem \ref{theorem_13}.
\end{lemma}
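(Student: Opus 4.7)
Since $v(s)\nabla$ and $iA(s)\sigma_3$ commute for every $s$ (the first acts only in the spatial variable, the second only in the matrix index), the exponential in (\ref{3.155}) factors as
\[
U(t) = e^{y(t)\nabla}\, e^{i\theta(t)\sigma_3}, \qquad y(t) = \int_0^t v(s)\,ds, \quad \theta(t) = \int_0^t A(s)\,ds.
\]
The first factor is a spatial translation (an isometry of every $L^p(\mathbb R^3)$), while the second is pointwise multiplication by the unitary matrix $\mathrm{diag}(e^{i\theta(t)}, e^{-i\theta(t)})$. Composition gives P1 with constant $1$. Both generators $\nabla$ and $\sigma_3$ commute with the constant-coefficient matrix Hamiltonian $\mathcal H_0$ in (\ref{eq_3.2})/(\ref{3.2}), and with each other, so P2 follows at once from the factorization.

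For P3, set $\alpha = \|A\|_{\infty} + \|v\|_{\infty}$, $a = y(t)-y(\tau)$, $b = \theta(t)-\theta(\tau)$; note $|a|\le\alpha|t-\tau|$ and $|b|\le\alpha|t-\tau|$. Since $U(t)U(\tau)^{-1} = e^{a\nabla}e^{ib\sigma_3}$ commutes with $e^{i(t-\tau)\mathcal H_0}$, the operator
\[
T(t,\tau) := \langle x\rangle^{-N}\bigl(U(t)U(\tau)^{-1}\,e^{i(t-\tau)\mathcal H_0} - e^{i(t-\tau)\mathcal H_0}\bigr)\langle x\rangle^{-N}
\]
has integral kernel $\langle x\rangle^{-N}\langle y\rangle^{-N}\bigl[e^{ib\sigma_3}K_{t-\tau}(x+a,y) - K_{t-\tau}(x,y)\bigr]$, where $K_s$ denotes the kernel of $e^{is\mathcal H_0}$. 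The plan is to control $\|T\|_{\tilde K} = \sup_\tau \int \|T(t,\tau)f\|_{L^2_x}\,dt/\|f\|_{L^2}$ by splitting $|t-\tau|$ at an optimal threshold $R=R(\alpha)$ and combining two complementary bounds on the operator norm: a \emph{crude} local-decay bound and a \emph{refined} Hilbert--Schmidt bound that captures the smallness of $a$ and $b$.

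For the crude bound, one has $\|T(t,\tau)\|_{L^2\to L^2}\lesssim \min(1,|t-\tau|^{-3/2})$, obtained by estimating each of the two summands $\langle x\rangle^{-N}U(t)U(\tau)^{-1}e^{i(t-\tau)\mathcal H_0}\langle x\rangle^{-N} = e^{a\nabla}e^{ib\sigma_3}\langle x-a\rangle^{-N}e^{i(t-\tau)\mathcal H_0}\langle x\rangle^{-N}$ and $\langle x\rangle^{-N}e^{i(t-\tau)\mathcal H_0}\langle x\rangle^{-N}$ via $L^2$-unitarity for small $|t-\tau|$ and the pointwise decay $|K_{t-\tau}(x,y)|\lesssim |t-\tau|^{-3/2}$ for large $|t-\tau|$ (the shifted weight $\langle x-a\rangle^{-N}$ gives the same Hilbert--Schmidt bound uniformly in $a$). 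For the refined bound, the elementary estimate $|e^{i\phi_1}-e^{i\phi_2}|\le|\phi_1-\phi_2|$ applied to the free-Schr\"odinger phase yields
\[
|K_{t-\tau}(x+a,y) - K_{t-\tau}(x,y)| \;\lesssim\; |t-\tau|^{-3/2}\,\min\bigl(1,\tfrac{|a|(|x-y|+|a|)}{|t-\tau|}\bigr),
\]
and the phase term contributes $|b|\,|K_{t-\tau}(x,y)|$. Inserting these into the Hilbert--Schmidt integral and using $N$ large enough so that $\iint\langle x\rangle^{-2N}\langle y\rangle^{-2N}(|x-y|^2+1)\,dx\,dy<\infty$ gives, for $|t-\tau|\ge 1$,
\[
\|T(t,\tau)\|_{L^2\to L^2}\le\|T(t,\tau)\|_{HS}\lesssim \tfrac{|a|}{|t-\tau|^{5/2}} + \min(|b|,1)\,|t-\tau|^{-3/2}.
\]
Using $|a|\le\alpha|t-\tau|$ and $\min(|b|,1)\le\min(\alpha|t-\tau|,1)$ and optimizing the split point $R$ against the crude bound then produces $\int\|T(t,\tau)\|\,dt \lesssim \alpha^{\epsilon(N)}$ for some $\epsilon(N)>0$ (of order $1/5$, say, coming from balancing $\alpha R^2$ against $R^{-1/2}$); the $\tilde K$-norm, being at most this integral, obeys the same bound.

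For P4, write $P_c(t)-P_c(s) = -\bigl(P_p(t)-P_p(s)\bigr)$ and expand
\[
P_p(t)-P_p(s) = (U(t)^{-1}-U(s)^{-1})\,P_p\,U(t) + U(s)^{-1}P_p\,(U(t)-U(s)).
\]
By Lemma \ref{pp}, $P_p$ has finite rank with range and co-range contained in $\langle\nabla\rangle^{-2}L^{6/5,2}\cap L^1$, so both $P_p U(t)$ and $U(s)^{-1}P_p$ map $L^{6,2}$ into a bounded family of smooth, rapidly decaying functions (using P1 for the uniform $L^p$-boundedness of $U$). On such smooth inputs the fundamental theorem and the factorization $U(t)-U(s) = \int_s^t(v(r)\nabla+iA(r)\sigma_3)U(r)\,dr$ give $\|(U(t)-U(s))\phi\|_{L^{6/5,2}}\le C|t-s|\alpha\,(\|\nabla\phi\|_{L^{6/5,2}}+\|\phi\|_{L^{6/5,2}})$, and similarly for $U(t)^{-1}-U(s)^{-1}$. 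Composing yields the required $\mathcal L(L^{6,2},L^{6/5,2})$ estimate.

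The main obstacle is the delicate bookkeeping in P3: neither the crude $L^2$-unitarity bound nor the Hilbert--Schmidt bound alone suffices at every scale of $|t-\tau|$, so the positive power $\epsilon(N)$ is recovered only after an optimal scale-splitting argument that exploits the smallness of $|a|$ and $|b|$ together with the local decay of the free resolvent.
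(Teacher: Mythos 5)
Your proposal follows essentially the same route as the paper's proof: P1--P2 are verified by inspection using the factorization $U(t)=e^{y(t)\nabla}e^{i\theta(t)\sigma_3}$, P3 combines the free dispersive decay $|t-\tau|^{-3/2}$ with a phase-linearization bound extracting a positive power of $\|A\|_\infty+\|v\|_\infty$ (you via Hilbert--Schmidt kernel estimates, the paper via interpolating $\|\cdot\|_{2\to 2}\lesssim\min(1,\alpha|t-\tau|)$ against $\|\cdot\|_{1\to\infty}\lesssim|t-\tau|^{-3/2}$ between the $\langle x\rangle^{-N}$ weights), and P4 follows from the finite rank of $P_p$ and the regularity and decay of its range and co-range, just as in the paper's commutator formulation. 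One small imprecision worth noting: the stated refined bound $\tfrac{|a|}{|t-\tau|^{5/2}}+\min(|b|,1)|t-\tau|^{-3/2}$ omits the contribution $\min\bigl(1,\tfrac{|a|^2}{|t-\tau|}\bigr)|t-\tau|^{-3/2}$ coming from the $x$-independent piece $\tfrac{|a|^2}{4(t-\tau)}$ of the shifted Gaussian phase --- the term the paper isolates as $e^{i(d(t)-d(s))^2/(t-s)}$ and treats alongside the $A$-oscillation --- so the exponents in the $R$-balancing step need adjusting, though this does not affect the $\alpha^{\epsilon(N)}$ conclusion.
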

\begin{proof} With no prejudice, we only consider the matrix case (\ref{1.20}).

It is straightforward to verify properties P1 and P2: $U(t)$ are isometries, so they preserve any isometry-invariant Banach space. The Laplace operator $\Delta$ (hence $\mc H_0$, too) commutes with translations and rotations.


The third property, P3, concerns a comparison between
\be\begin{aligned}
T(t, s) = e^{i(t-s) \mc H_0}
\end{aligned}\ee
and
\be\begin{aligned}
\tilde T(t, s) &= e^{i(t-s) \mc H_0} U(t) U(s)^{-1} \\
&= e^{i(t-s) \mc H_0} \Omega(t) e^{(D(t) - D(s)) \dl + i (A(t) - A(s)) \sigma_3} \Omega(s)^{-1}.
\end{aligned}\ee


We first study the model operator with $\Omega \equiv I$ and $D \equiv 0$, so we only have to handle the oscillation $A$. Denote this by
\be\begin{aligned}
\tilde T_1(t, s) &= e^{i(t-s) \mc H_0} e^{i(A(t) - A(s)) \sigma_3}.
\end{aligned}\ee
One has
\be
e^{ia} -1 \leq C \min(1, a)
\ee
and then, by the $L^1$ to $L^{\infty}$ $t^{-3/2}$ dispersive estimate, for large enough $N$
\be\lb{3.167}\begin{aligned}
\|\langle x \rangle^{-N} (T(t, s) - \tilde T_1(t, s)) \langle x \rangle^{-N}\|_{2 \to 2} &\leq C \|T(t, s) - \tilde T(t, s)\|_{1 \to \infty} \\
&\leq C |t-s|^{-3/2} |A(t)-A(s)| \\
&\leq C |t-s|^{-1/2} \|A'\|_{\infty}.
\end{aligned}\ee

Next, consider the case when $\Omega \equiv I$, but $D$ and $A$ need not be zero. Let
\be
\tilde T_2(t, s) = e^{i(t-s) \mc H_0} e^{(D(t) - D(s)) \dl + i (A(t) - A(s)) \sigma_3}.
\ee
We start with the Leibniz-Newton formula
\be
\tilde T_2(t, s) - \tilde T_1(t, s) = \int_s^t D'(\tau) \dl e^{i(t-s) \mc H_0} e^{(D(\tau) - D(s)) \dl + i (A(t) - A(s)) \sigma_3} \dd \tau,
\ee
which implies
\be\begin{aligned}\lb{eq2.166}
&\|\langle x \rangle^{-N} (\tilde T_2(t, s) - \tilde T_1(t, s)) \langle x \rangle^{-N}\|_{2 \to 2} \leq \\
&\leq C \|D'\|_{\infty} |t-s| \cdot \sup_{\tau \in [s, t]} \|\langle x \rangle^{-N} \dl e^{(D(\tau) - D(s)) \dl} \tilde T_1(t, s) \langle x \rangle^{-N}\|_{2 \to 2} \\
&= C \|D'\|_{\infty} |t-s| \cdot \sup_{\tau \in [s, t]} \|\langle x \rangle^{-N} \dl e^{(D(\tau) - D(s)) \dl} \dl T(t, s) \langle x \rangle^{-N}\|_{2 \to 2}.
\end{aligned}\ee
From the explicit form of the fundamental solution of the free Schr\"{o}dinger equation,
we obtain the following bound for the kernel of $\dl T(t, s)$:
\be
|\dl T(t, s)|(x, y) \leq C \frac {|x-y|}{|t-s|^{5/2}}.
\ee
Then,
\be\begin{aligned}\lb{2.168}
|\dl e^{(D(\tau)-D(s)) \dl} T(t, s)|(x, y) &= |\dl T(t, s)|(x+D(\tau)-D(s), y) \\
&\leq C \frac {|x| + |y| + |\tau-s| \|D'\|_{\infty}}{|t-s|^{5/2}}.
\end{aligned}\ee
(\ref{eq2.166}) and (\ref{2.168}) imply that, for sufficiently large $N$,
\be\lb{2.169}
\|\langle x \rangle^{-N} (\tilde T_2(t, s) - \tilde T_1(t, s)) \langle x \rangle^{-N}\|_{2 \to 2} \leq C (|t-s|^{-3/2} \|D'\|_{\infty} + |t-s|^{-1/2} \|D'\|_{\infty}^2).
\ee
Finally, if $\Omega \ne I$, let $\tilde D(\tau) = \Omega(s) D(\tau)$; then
\be
\tilde T(t, s) = e^{i(t-s) \mc H_0} \Omega(t) \Omega(s)^{-1} e^{(\tilde D(t) - \tilde D(s))\dl + i(A(t) - A(s))\sigma_3}.
\ee
We use a concrete description of $\Omega'(t)$: there exists an infinitesimal rotation
\be
\omega(t) = (x \cdot e_1(t)) \partial_{e_2(t)} - (x \cdot e_2(t)) \partial_{e_1(t)}
\ee
such that $\partial_t \Omega(t)$ equals $|\Omega'(t)|$ times $\omega(t)$, applied to $\Omega(t)$:
\be\lb{omega}
\partial_t \Omega(t) f = |\Omega'(t)| \omega(t) \Omega(t) f.
\ee
By the Leibniz-Newton formula again,
\be\begin{aligned}
\tilde T(t, s) - \tilde T_2(t, s) &= \int_s^t |\Omega'(\tau)| \omega(t) e^{i(t-s) \mc H_0} \Omega(\tau) \Omega(s)^{-1} \\
&e^{(\tilde D(t) - \tilde D(s))\dl + i(A(t) - A(s))\sigma_3} \dd \tau.
\end{aligned}\ee
This implies
\be\begin{aligned}
&\|\langle x \rangle^{-N} (\tilde T(t, s) - \tilde T_2(t, s)) \langle x \rangle^{-N}\|_{2 \to 2} \leq \\
&\leq C \|\Omega'\|_{\infty} |t-s| \cdot \sup_{\tau \in [s, t]} \sup_{\omega} \|\langle x \rangle^{-N} \omega \Omega(\tau) \Omega(s)^{-1} \tilde T_2(t, s) \langle x \rangle^{-N}\|_{2 \to 2} \\
&= C \|\Omega'\|_{\infty} |t-s| \cdot \sup_{\tau \in [s, t]} \sup_{\omega} \|\langle x \rangle^{-N} \omega \tilde T_2(t, s) \langle x \rangle^{-N}\|_{2 \to 2},
\end{aligned}\ee
where the supremum is taken over all infinitesimal rotations $\omega$.

From the explicit form of the fundamental solution, we get that
\be
\sup_{\omega} |\omega T(t, s)|(x, y) \leq C \frac {|x| |y|}{|t-s|^{5/2}}.
\ee
Then,
\be\begin{aligned}
\sup_{\omega} |\omega \tilde T_2(t, s)|(x, y) &= \sup_{\omega} |\omega T(t, s)|(x + \tilde D(\tau) - \tilde D(s), y) \\
&\leq C \frac {(|x| + |D(\tau) - D(s)|) |y|}{|t-s|^{5/2}}.
\end{aligned}\ee
For sufficiently large $N$, this implies
\be\begin{aligned}\lb{2.178}
&\|\langle x \rangle^{-N} (\tilde T(t, s) - \tilde T_2(t, s)) \langle x \rangle^{-N}\|_{2 \to 2} \leq \\
&\leq C \big(|t-s|^{-3/2} \|\Omega'\|_{\infty} + |t-s|^{-1/2} \|\Omega'\|_{\infty} \|D'\|_{\infty}\big).
\end{aligned}\ee
By (\ref{3.167}), (\ref{2.169}), and (\ref{2.178}),
\be\begin{aligned}
&\|\langle x \rangle^{-N} (\tilde T_2(t, s) - T(t, s)) \langle x \rangle^{-N}\|_{2 \to 2} \leq \\
&\leq C \big(|t-s|^{-3/2} (\|\Omega'\|_{\infty} + \|D'\|_{\infty}) + |t-s|^{-1/2} (\|\Omega'\|_{\infty} \|D'\|_{\infty} + \|D'\|_{\infty}^2 + \|A'\|_{\infty})\big).
\end{aligned}\ee
This proves property P3.


Finally, P4 follows from the fact that eigenfunctions of $\mc H$ are in $\langle \dl \rangle^{-2} L^{6/5, 2}$, by Lemma \ref{pp}. Indeed, by (\ref{omega}),
\be
\partial_t U(t) U^{-1}(t) = |\Omega'(t)| \omega(t) + (\Omega(t) D'(t)) \dl + A'(t) \sigma_3,
\ee
where $\omega(t)$ is an infinitesimal rotation.

Then, let $f \in \langle \dl \rangle^{-2} L^{6/5, 2}$ solve, with $\lambda \not \in (-\infty, -\mu] \cup [\mu, \infty)$,
\be
f = R_0(\lambda) V f.
\ee
It is immediate that
\be
\|(\Omega(t) D'(t)) \dl f\| + \|A'(t) \sigma_3 f\|_{L^{6/5, 2}} \leq C (|D'| + |A'|) \|f\|_{\langle \dl \rangle^{-2} L^{6/5, 2}}.
\ee
If $V \in L^{3/2, 1}$, then $f \in \langle x \rangle^{-2} L^{6/5, 1}$, implying that $(\Omega(t) D'(t)) \dl f + A'(t) \sigma_3 f \in L^{6/5, 1}$ instead.

For the $\Omega(t)$ component of $U(t)$, P4 reduces to knowing that $\omega f \in L^{6/5, 2}$. However,
\be
|\omega R_0(\lambda)|(x, y) \leq C (1 + |x-y|) |R_0(\lambda)|(x, y).
\ee
P4 follows because $Vf \in L^{6/5, 2}$ and $|R_0(\lambda)|(x, y)$ decays exponentially.

If $V \in L^{3/2, 1}$, then $Vf \in L^{6/5, 1}$ implies that $\omega f \in L^{6/5, 1}$.
\end{proof}

\begin{proof}[Proof of Theorem \ref{theorem_13}] 
To begin with, assume that $V \in L^{\infty}$. This endows the Schr\"{o}dinger evolution with a meaning, in the space of exponentially growing functions in the $L^2$ norm. In the end, one can discard this assumption following an approximation argument.

Write the equation in the form (\ref{1.30}) and let
\be\begin{aligned}
&\tilde Z = P_c Z,\ \tilde F = P_c U(t) F - i[P_c, \partial_t U(t) U(t)^{-1}] (\tilde Z + P_p Z).\\
\end{aligned}\ee
The equation becomes
\be\begin{aligned}\lb{2.184}
&i \partial_t \tilde Z - i \partial_t U(t) U(t)^{-1} \tilde Z  + {\mc H} \tilde Z = \tilde F,\ \tilde Z(0) = P_c U(0) R(0) \text{ given}.
\end{aligned}\ee
Fixing $\delta>0$, we rewrite (\ref{2.184}) in the equivalent form
\be\begin{aligned}
&i \partial_t \tilde Z - i \partial_t U(t) U(t)^{-1} \tilde Z + (\mc H P_c +i \delta P_p) \tilde Z = \tilde F.
\end{aligned}\ee
Note that
\be
\mc H P_c +i \delta P_p = \mc H_0 + V - P_p (\mc H - i \delta).
\ee
Lemma \ref{lemma32} provides a decomposition
\be
V - P_p ({\mc H} - i\delta) = \tilde V_1 \tilde V_2,
\ee
where $\tilde V_1$ and $\tilde V_2^*$ are in $\mc L(L^2, L^{6/5, 2})$ and can be approximated in this space by operators $F_1^n$ and $F_2^n$, such that $F_1^n$, $(F_2^n)^* \in \mc L(L^2, \langle x \rangle^{-N} L^2)$.

Thus, we obtain
\be
i \partial_t \tilde Z - i \partial_t U(t) U(t)^{-1} \tilde Z + (\mc H_0 + \tilde V_1 \tilde V_2) \tilde Z = \tilde F.
\ee

As a general model, consider a solution of the equation
\be
i \partial_t f - i \partial_t U(t) U(t)^{-1} f + \mc H_0 f = F, f(0) \text{ given}.
\ee
Letting $f = U(t) g$, $F = U(t) G$, the equation becomes
\be
i \partial_t g + \mc H_0 g = G, g(0) = U(0)^{-1} f(0),
\ee
so
\be
g = e^{it \mc H_0} g(0) - i \int_{-\infty}^t e^{i(t-s) \mc H_0} G(s) \dd s
\ee
and
\be
f = e^{it \mc H_0} U(t) U(0)^{-1} f(0) - i \int_{-\infty}^t e^{i(t-s) \mc H_0} U(t) U(s)^{-1} F(s) \dd s.
\ee
In particular, we obtain that
\be\lb{2.193}
\tilde Z = \int_{-\infty}^t e^{i(t-s) \mc H_0} U(t) U(s)^{-1} (\delta_{s=0} \tilde Z(0) -i\tilde F(s) + i\tilde V_1 \tilde V_2 \tilde Z) \dd s.
\ee
Denote
\be\lb{3.144}\begin{aligned}
\tilde T_{\tilde V_2, \tilde V_1} F(t) &= \int_{-\infty}^t \tilde V_2 e^{i(t-s) {\mc H}_0} U(t) U(s)^{-1} \tilde V_1 F(s) \dd s, 
\end{aligned}\ee
respectively
\be\begin{aligned}
\tilde T_{\tilde V_2, I} F(t) &= \int_{-\infty}^t \tilde V_2 e^{i(t-s) {\mc H}_0} U(t) U(s)^{-1} F(s) \dd s.
\end{aligned}\ee
Then, rewrite Duhamel's formula (\ref{2.193}) as
\be\lb{3.186}\begin{aligned}
(I - i \tilde T_{\tilde V_2, \tilde V_1}) \tilde V_2 \tilde Z(t) &= \tilde T_{\tilde V_2, I} (-i \tilde F(s) + \delta_{s=0} \tilde Z(0)).
\end{aligned}\ee
We compare $\tilde T_{\tilde V_2, \tilde V_1}$ with the kernel, invariant under time translation,
\be\begin{aligned}
T_{\tilde V_2, \tilde V_1} F(t) &= \int_{-\infty}^t \tilde V_2 e^{i(t-s) {\mc H}_0} \tilde V_1 F(s) \dd s. 
\end{aligned}\ee
Indeed, $I - i T_{\tilde V_2, \tilde V_1}$ is invertible, see (\ref{2.209}), and we want to prove the same for $\tilde T$, so that we can invert in (\ref{3.186}).

We make the comparison in the following algebra $\tilde K$, whose definition parallels that of $K$, Definition \ref{def_k}.
\begin{definition}\lb{def_kt}
$
\tilde K = \{T(t, s) \mid \sup_s \|T(t, s) f\|_{M_t L^2_x} \leq C \|f\|_2\}.
$
\end{definition}


Here $M_t L^2_x$ is the set of $L^2$-valued measures of finite mass on the Borel algebra of $\set R$. $\tilde K$ contains operators that are not translation-invariant in $t$ and $s$, is naturally endowed with a unit, but is not a $C^*$ algebra.

To carry out the comparison of $T$ and $\tilde T$, let
\be
f(\pi, \tau) = \sup_{t-s=\tau} \|\langle x \rangle^{-N} (U(t) U(s)^{-1} e^{i(t-s) \mc H_0} - e^{i(t-s) \mc H_0}) \langle x \rangle^{-N}\|_{2 \to 2}
\ee
and observe that
\be
\|\langle x \rangle^{-N} (U(t) U(s)^{-1} e^{i(t-s) \mc H_0} - e^{i(t-s) \mc H_0}) \langle x \rangle^{-N}\|_{\tilde K} \leq \int_{\set R} f(\pi, \tau) \dd \tau.
\ee
By condition P3, for almost every $\tau$
\be\begin{aligned}
\lim_{\substack{\|\pi'\|_{\infty} \to 0}} f(\pi, \tau) = 0.
\end{aligned}\ee
On the other hand, by combining the $L^2$ boundedness and the $L^1 \to L^{\infty}$ decay estimates,
\be\begin{aligned}\lb{3.172}
&\|\langle x \rangle^{-N} (U(t) U(s)^{-1} e^{i(t-s) \mc H_0} - e^{i(t-s) \mc H_0}) \langle x \rangle^{-N}\|_{2 \to 2} \leq \\
&\leq \|\langle x \rangle^{-N} U(t) U(s)^{-1} e^{i(t-s) \mc H_0} \langle x \rangle^{-N}\|_{2 \to 2} + \|\langle x \rangle^{-N} e^{i(t-s) \mc H_0} \langle x \rangle^{-N}\|_{2 \to 2}\\
&\leq C \min (1, |t-s|^{-3/2}).
\end{aligned}\ee
Equivalently,
\be
|f(\pi, \tau)| \leq C \min (1, |t-s|^{-3/2}),
\ee
so $f(\pi, \tau)$ is uniformly integrable in $\tau$. By dominated convergence,
\be
\lim_{\|\pi'\|_\infty \to 0} \int_{\set R} f(\pi, \tau) \dd \tau = 0,
\ee
hence
\be
\lim_{\substack{\|\pi'\|_{\infty} \to 0}} \|\langle x \rangle^{-N} (U(t) U(s)^{-1} e^{i(t-s) \mc H_0} - e^{i(t-s) \mc H_0}) \langle x \rangle^{-N}\|_{\tilde K} = 0.
\ee
Therefore, for each approximation $F_1^n$ and $F_2^n$ of $\tilde V_1$ and $\tilde V_2$ 
\be
\lim_{\substack{\|\pi'\|_{\infty} \to 0}} \|T_{F_2^n, F_1^n} - \tilde T_{F_2^n, F_1^n}\|_{\tilde K} = 0.
\ee
Since $F_2^n$ and $F_1^n$ are approximations of $\tilde V_2$ and $\tilde V_1$, by (\ref{1.45})
\be\begin{aligned}\lb{2.166}
&\lim_{n \to \infty} \|T_{F_2^n, F_1^n} - T_{\tilde V_2, \tilde V_1}\|_{\mc L(L^2_{t, x}, L^2_{t, x})} = 0, \\
&\lim_{n \to \infty} \|\tilde T_{F_2^n, F_1^n} - \tilde T_{\tilde V_2, \tilde V_1}\|_{\mc L(L^2_{t, x}, L^2_{t, x})} = 0.
\end{aligned}\ee
When $V \in L^{3/2, 1}$, one can replace $\mc L(L^2_{t, x}, L^2_{t, x})$ by $\tilde K$ in (\ref{2.166}), using (\ref{1.46}) instead of (\ref{1.45}).

Therefore, when $V \in L^{3/2, \infty}_0$
\be\lb{3.189}
\lim_{\substack{\|\pi'\|_{\infty} \to 0}} \|T_{\tilde V_2, \tilde V_1} - \tilde T_{\tilde V_2, \tilde V_1}\|_{\mc L(L^2_{t, x}, L^2_{t, x})} = 0
\ee
and when $V \in L^{3/2, 1}$
\be
\lim_{\substack{\|\pi'\|_{\infty} \to 0}} \|T_{\tilde V_2, \tilde V_1} - \tilde T_{\tilde V_2, \tilde V_1}\|_{\tilde K} = 0.
\ee
However, the operator $I - i T_{\tilde V_1, \tilde V_2}$ is invertible in $\mc L(L^2_{t, x}, L^2_{t, x})$. Indeed, its inverse is formally (say, for compactly supported functions in $t$) given by the following formula:
\be\begin{aligned}\lb{2.209}
(I - i T_{\tilde V_1, \tilde V_2})^{-1} F(t) &= F(t) - i\int_{-\infty}^t \tilde V_2 e^{i(t-s)\mc H P_c - \delta(t-s) P_p} \tilde V_1 F(s) \dd s.
\end{aligned}\ee
Due to the Strichartz estimates of Theorem \ref{theorem_26}, this formal inverse actually is in $\mc L(L^2_{t, x}, L^2_{t, x})$.

For the same reason, when $V \in L^{3/2, 1}$, $I - i T_{\tilde V_1, \tilde V_2}$ is also invertible in the $\tilde K$ algebra of Definition \ref{def_kt}.

By (\ref{3.189}), it follows that when $\|\pi'\|_{\infty}$ is small enough $I - i \tilde T_{\tilde V_2, \tilde V_1}$ is also invertible in $\mc L(L^2_{t, x}, L^2_{t, x})$, respectively $\tilde K$.

Using the invertibility of $I - i \tilde T_{\tilde V_2, \tilde V_1}$ in (\ref{3.186}) results in
\be\begin{aligned}
\|\tilde V_2 \tilde Z\|_{L^2_{t, x}} &\leq C \|T_{\tilde V_2, I}(-i \tilde F + \delta_{s=0} \tilde Z(0))\|_{L^2_{t, x}} \\
&\leq C\big(\|\tilde F\|_{L^1_t L^2_x + L^2_t L^{6/5, 2}_x} + \|\tilde Z(0)\|_{L^2_x}\big),
\end{aligned}\ee
respectively
\be\begin{aligned}
\|\tilde V_2 \tilde Z\|_{L^1_t L^2_x} &\leq C \|T_{\tilde V_2, I}(-i \tilde F + \delta_{s=0} \tilde Z(0))\|_{L^1_t L^2_x} \\
&\leq C\big(\|\tilde F\|_{L^1_t L^{6/5, 1}_x} + \|\tilde Z(0)\|_{L^{6/5, 1}_x}\big).
\end{aligned}\ee
Plugging this back into Duhamel's formula (\ref{2.193}) leads to the desired estimates.

Finally, we account for the right-hand side term $\partial_t [P_c, \partial_t U(t) U(t)^{-1}] \tilde Z$. Property P4 and the structure of $P_c$ (see (\ref{pc}) and Lemma \ref{pp}) imply
\be
\|\partial_t [P_c, \partial_t U(t) U(t)^{-1}] \tilde Z\|_{L^2_t L^{6/5, 2}_x} \leq C \|\pi'\|_{\infty} \|\tilde Z\|_{L^2_t L^{6, 2}_x}.
\ee
Thus, the commutator $\partial_t [P_c, \partial_t U(t) U(t)^{-1}] \tilde Z$ is 
controlled by Strichartz inequalities and a fixed point argument in $L^2_t L^{6, 2}_x$, if $\|\pi'\|_{\infty}$ is small.

The same goes in regard to the proof of (\ref{2.157}), where the commutation term is controlled by P4 and a fixed point argument in $L^1_t L^{6, 1}_x$.
%
\end{proof}

\section*{Acknowledgments} I would like to thank Michael Goldberg,  Wilhelm Schlag, and the anonymous referee for their useful comments.

\appendix
\section{Atomic decomposition of Lorentz spaces}\lb{spaces} Our computations take place in Lebesgue and Sobolev spaces of functions defined on $\set R^{3+1}$. This  corresponds to three spatial dimensions and one extra dimension that accounts for time.


We denote the Lebesgue norm of $f$ by by $\|f\|_p$. The Sobolev norms of integral order $W^{n, p}$ are then defined by
\be
\|f\|_{W^{n, p}} = \bigg(\sum_{|\alpha| \leq n} \|\partial^{\alpha} f\|_p^p\bigg)^{1/p}
\ee
for $1 \leq p < \infty$ and
\be
\|f\|_{W^{n, \infty}} = \sup_{|\alpha| \leq n} \|\partial^{\alpha} f\|_{\infty}.
\ee
when $p=\infty$. In addition, we consider Sobolev spaces of fractional order, both homogenous and inhomogenous:
\be
\|f\|_{W^{s, p}} = \|\langle \dl \rangle^s f\|_p, \text{ respectively } \|f\|_{\dot W^{s, p}} = \||\dl| f\|_p.
\ee
Here $\langle \dl \rangle^s$ and $|\dl|^s$ denote Fourier multipliers --- multiplication on the Fourier side by $\langle \xi \rangle^s = (1+|\xi|^2)^{s/2}$ and $|\xi|^{s}$ respectively.

When $p=2$, the alternate notation $H^s = W^{s, 2}$ or $\dot H^s = \dot W^{s, 2}$ is customary.


In addition, we are naturally led to consider Lorentz spaces. Given a measurable function $f$ on a measure space $(X, \mu)$, consider its distribution function
\be
m(\sigma, f) = \mu(\{x \mid |f(x)|>\sigma\}).
\ee
\begin{definition}
A measurable function $f$ belongs to the Lorentz space $L^{p, q}$ if its decreasing rearrangement
\be\lb{A6}
f^*(t) = \inf\{\sigma \mid m(\sigma, f) \leq t\}
\ee
fulfills
\be
\|f\|_{L^{p, q}} = \bigg(\int_0^{\infty} (t^{1/p} f^*(t))^q \frac {\dd t} t \bigg)^{1/q} < \infty
\lb{lorentz}
\ee
or, respectively,
\be\lb{A7}
\|f\|_{L^{p, \infty}} = \sup_{0 \leq t < \infty} t^{1/p} f^*(t) < \infty
\ee
when $q=\infty$.
\end{definition}
We list several important properties of Lorentz spaces.
\begin{lemma}\begin{enumerate}\item[i)] $L^{p, p} = L^p$ and $L^{p, \infty}$ is weak-$L^p$.
\item[ii)] The dual of $L^{p, q}$ is $L^{p', q'}$, where $1/p + 1/p' = 1$, $1/q + 1/q' = 1$.
\item[iii)] If $q_1 \leq q_2$, then $L^{p, q_1} \subset L^{p, q_2}$.
\item[iv)] Except when $q=\infty$, the set of bounded compactly supported functions is dense in $L^{p, q}$.
\end{enumerate}\end{lemma}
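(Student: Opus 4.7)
The plan is to treat the four claims in turn, reducing everything to statements about the one–variable decreasing rearrangement $f^*$ defined in (\ref{A6}); the key technical tool in the background is the equimeasurability identity
\be
\int_X \Phi(|f(x)|) \dd \mu(x) = \int_0^\infty \Phi(f^*(t)) \dd t \nonumber
\ee
for any nondecreasing $\Phi$ with $\Phi(0)=0$, and the Hardy--Littlewood rearrangement inequality $\int_X |fg| \dd \mu \leq \int_0^\infty f^*(t) g^*(t) \dd t$.

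For (i), I would just compute: taking $q=p$ in (\ref{lorentz}) collapses the weight $t^{q/p-1}=1$, giving $\|f\|_{L^{p,p}}^p = \int_0^\infty f^*(t)^p \dd t$, which equals $\|f\|_p^p$ by equimeasurability. For the weak-$L^p$ identification, I would use that by the definition of $f^*$ one has $f^*(t)>\sigma$ iff $m(\sigma,f)>t$; this yields $\sigma^p m(\sigma,f) \leq \sup_t t (f^*(t))^p = \|f\|_{L^{p,\infty}}^p$ and, conversely, $t^{1/p} f^*(t) \leq \sup_\sigma \sigma\, m(\sigma,f)^{1/p}$, giving equality of the two weak-$L^p$ norms.

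For (iii), the clean approach is to first establish the pointwise bound $t^{1/p} f^*(t) \leq (q_1/p)^{1/q_1} \|f\|_{L^{p,q_1}}$, which follows from the monotonicity of $f^*$ applied inside the integral defining $\|f\|_{L^{p,q_1}}$ over $(0,t)$. This embeds $L^{p,q_1}$ into $L^{p,\infty}$. To go from $L^{p,q_1}$ to $L^{p,q_2}$ with $q_1<q_2<\infty$, I would split
\be
\|f\|_{L^{p,q_2}}^{q_2} = \int_0^\infty (t^{1/p} f^*(t))^{q_2-q_1} (t^{1/p} f^*(t))^{q_1} \frac{\dd t}{t} \leq \|f\|_{L^{p,\infty}}^{q_2-q_1} \|f\|_{L^{p,q_1}}^{q_1} \nonumber
\ee
and then invoke the first step.

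For (iv), I would set $f_{N,R} = f\, \chi_{\{|f|\leq N\}} \chi_{\{|x|\leq R\}}$, which is bounded with compact support. The decreasing rearrangement of $f - f_{N,R}$ is dominated pointwise by $f^*$ and tends to $0$ almost everywhere in $t$ as $N,R\to\infty$, so the integral in (\ref{lorentz}) tends to zero by dominated convergence, provided $q<\infty$. (The argument collapses when $q=\infty$, which is consistent with the stated exception: a function like $|x|^{-3/p}$ lies in $L^{p,\infty}$ but is bounded away from zero in that norm by all of its truncations.)

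The main obstacle is (ii), the duality. I would proceed in two steps. First, for $f \in L^{p,q}$ and $g \in L^{p',q'}$, the Hardy--Littlewood inequality gives $|\langle f,g\rangle| \leq \int_0^\infty f^*(t) g^*(t) \dd t$, and then an application of Hölder's inequality on $(0,\infty)$ with measure $\dd t/t$ to the factorization $f^*(t)g^*(t) = (t^{1/p} f^*(t))(t^{1/p'} g^*(t))$ yields $|\langle f,g\rangle| \leq \|f\|_{L^{p,q}} \|g\|_{L^{p',q'}}$; this realises $L^{p',q'}$ inside $(L^{p,q})^*$. Second, for the converse I would take a bounded linear functional $\Lambda$ on $L^{p,q}$, restrict it to characteristic functions of finite-measure sets (which are dense by part (iv) when $q<\infty$, and for $q=\infty$ one localises to obtain a $\sigma$-finite reduction), and apply the Radon--Nikodym theorem to produce a representing function $g$. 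The nontrivial point is estimating $\|g\|_{L^{p',q'}}$: I would test $\Lambda$ against carefully chosen step functions supported on level sets of $g^*$, recovering the decreasing rearrangement of $g$ up to a constant and giving the matching lower bound. This step is where the standard Hardy-type rearrangement inequality is used twice, and where the case $q=1$ (so $q'=\infty$) and $q=\infty$ (so $q'=1$) need separate verification; the remaining cases follow by standard density and approximation.
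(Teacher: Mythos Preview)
The paper does not actually prove this lemma: immediately after the statement it simply refers the reader to Bergh--L\"ofstr\"om \cite{bergh} for the proofs and a more complete enumeration. So there is no argument in the paper to compare your proposal against; you have supplied what the author deliberately omitted.

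Your sketch follows the standard route one finds in \cite{bergh} and related texts, and the arguments for (i), (iii), and (iv) are correct. Two small remarks. In (iv) you write $\chi_{\{|x|\le R\}}$, which presupposes a Euclidean (or at least metric) structure; on a general $\sigma$-finite measure space the right truncation is by an exhausting sequence of finite-measure sets, which is what the dominated-convergence step actually needs. In (ii), be aware that the duality $(L^{p,q})^* = L^{p',q'}$ as stated in the lemma only holds for $1<p<\infty$ and $1\le q<\infty$; when $q=\infty$ the dual of $L^{p,\infty}$ is strictly larger than $L^{p',1}$ (the paper itself notes this a few lines later, identifying $L^{p',1}$ as the dual of $L^{p,\infty}_0$ rather than of $L^{p,\infty}$). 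Your Radon--Nikodym step implicitly uses the density result (iv), which is exactly what fails at $q=\infty$, so the argument is consistent once one restricts the statement appropriately.
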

For a more complete enumeration, see \cite{bergh}.



We conclude by proving a lemma concerning the atomic decomposition of Lorentz spaces. In preparation for that, we give the following definition:
\begin{definition}
The function $a$ is an $L^p$ atom, $1 \leq p < \infty$, if $a$ is measurable, bounded, its support has finite measure, and $a$ is $L^p$ normalized:
\be
\esssup_x |a(x)| < \infty,\ \mu(\supp a)< \infty,\ (\esssup_x |a(x)|)^p \cdot \mu(\supp a) = 1.
\ee
\end{definition}
Again, note that $a$ is an atom if and only if $|a|$ is one.

We call a sequence lacunary if the quotient between two successive elements is bounded from below by a number greater than one.
\begin{lemma}[Atomic decomposition of $L^{p, q}$]\lb{lemma_30} Consider a measure space $(X, \mu)$. A function $f$ belongs to $L^{p, q}(X)$, $1 \leq p$, $q < \infty$, if and only if it is a linear combination of $L^p$ atoms
\be\lb{A.9}
f = \sum_{k \in \set Z} \alpha_k a_k,
\ee
where the atoms $a_k$ have disjoint supports of lacunary sizes and $(\alpha_k)_k \in \ell^q(\set Z)$. Furthermore, $\|f\|_{L^{p, q}} \sim \|\alpha_k\|_{\ell^q}$ and the sum (\ref{A.9}) converges unconditionally in the $L^{p, q}$ norm.
\end{lemma}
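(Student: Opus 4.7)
The plan is to work throughout with the distribution function $m(\sigma,f)$ and the decreasing rearrangement $f^{*}$, and to verify the two directions separately, matching the atomic data against the dyadic discretization of $\|f\|_{L^{p,q}}^{q}=\int_{0}^{\infty}(t^{1/p}f^{*}(t))^{q}\,dt/t$.

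For the direction ``atoms $\Rightarrow L^{p,q}$'', I first check that a single $L^{p}$ atom $a$ with $\mu(\supp a)=\sigma$ and $\esssup|a|=\sigma^{-1/p}$ has $a^{*}(t)\le \sigma^{-1/p}\chi_{[0,\sigma]}(t)$, hence
\be
\|a\|_{L^{p,q}}^{q}\leq \sigma^{-q/p}\!\!\int_{0}^{\sigma}t^{q/p-1}\,dt = p/q,
\ee
independent of $\sigma$. Given a sum $f=\sum_{k}\alpha_{k}a_{k}$ with pairwise disjoint supports of lacunary sizes $\sigma_{k}=2^{n_{k}}$, disjointness lets me read $f^{*}$ off directly: after reordering so that $|\alpha_{k}|\sigma_{k}^{-1/p}$ is decreasing, $f^{*}$ is a step function whose $k$-th step has height $|\alpha_{k}|\sigma_{k}^{-1/p}$ on an interval of length $\sigma_{k}$ starting at $\sum_{k'<k}\sigma_{k'}\sim \sigma_{k}$, where lacunarity is used to absorb the cumulative sum into a constant multiple of $\sigma_{k}$. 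Plugging this step function into the defining integral collapses it to a geometric sum in $k$ controlled by $C\|(\alpha_{k})\|_{\ell^{q}}$.

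For the converse direction, I slice $f$ by dyadic levels of the distribution function. Set $\lambda_{k}=f^{*}(2^{k})$, so that $\lambda_{k}$ is non-increasing and $\mu(\{|f|>\lambda_{k}\})\le 2^{k}$, and define
\be
E_{k}=\{x:\lambda_{k}<|f(x)|\le \lambda_{k-1}\},\qquad \alpha_{k}=\lambda_{k-1}\cdot 2^{k/p},\qquad a_{k}=\alpha_{k}^{-1}f\chi_{E_{k}}.
\ee
Each $a_{k}$ is an $L^{p}$ atom, the supports are disjoint, and $f=\sum_{k}\alpha_{k}a_{k}$. The comparison $\sum_{k}|\alpha_{k}|^{q}=\sum_{k}\lambda_{k-1}^{q}\,2^{kq/p}$ is exactly a dyadic Riemann sum for $\int_{0}^{\infty}(t^{1/p}f^{*}(t))^{q}\,dt/t$, and monotonicity of $f^{*}$ furnishes the matching lower bound, so $\|(\alpha_{k})\|_{\ell^{q}}\sim \|f\|_{L^{p,q}}$.

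The main obstacle is ensuring the supports are of \emph{exactly} lacunary size, rather than the a priori arbitrary measures $\mu(E_{k})$ coming from the natural level-set slicing. I handle this by transferring the problem to the decreasing rearrangement: since the $L^{p,q}$ norm depends on $f$ only through $f^{*}$, I may work from the start on $((0,\infty),dt)$ with $f=f^{*}$, where the level sets are genuine dyadic intervals of length $2^{k}$. Once the atoms have the correct geometric sizes, unconditional convergence of the series $f=\sum_{k}\alpha_{k}a_{k}$ in the $L^{p,q}$ norm follows from the forward estimate applied to tails, using that $q<\infty$ makes $\sum_{|k|\ge N}|\alpha_{k}|^{q}\to 0$.
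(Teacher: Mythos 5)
The converse direction has a genuine gap that the proposed fix does not close. You correctly identify the obstacle: the level sets $E_k = \{x : f^*(2^k) < |f(x)| \le f^*(2^{k-1})\}$ need not have measure comparable to $2^k$; they can be arbitrarily small or empty (whenever $f^*$ is flat across a dyadic scale). Consequently, the functions $a_k = \alpha_k^{-1} f\chi_{E_k}$ with your choice $\alpha_k = \lambda_{k-1}2^{k/p}$ are only \emph{sub}-atoms, with $(\esssup|a_k|)^p\mu(\supp a_k)$ possibly far less than $1$, and the supports are not lacunary. If you instead normalize properly ($\alpha_k=\mu(E_k)^{1/p}\esssup_{E_k}|f|$), then $\alpha_k$ can be much smaller than $\lambda_{k-1}2^{k/p}$, and without lacunary supports the forward estimate you would need for the lower bound $\|f\|_{L^{p,q}} \gtrsim \|\alpha_k\|_{\ell^q}$ is unavailable.

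Your proposed remedy — "work with $f=f^*$ on $(0,\infty)$" — does not rescue this. The lemma asserts a decomposition of $f$ itself into atoms \emph{on $X$} with disjoint supports \emph{in $X$}, and a series converging to $f$ in $L^{p,q}(X)$. A dyadic decomposition of $f^*$ on $(0,\infty)$ is a decomposition of a different function on a different measure space; there is in general no measure-preserving map pulling it back (e.g.\ if $\mu$ has atoms, or if $\{|f|=f^*(2^k)\}$ has positive measure so the level structure does not match dyadic intervals). The paper resolves this entirely on $X$ by a combinatorial sub-selection: it defines $n(k)$ and $m(k)$ and passes to a subsequence $(k_\ell)$ so that the coalesced level sets $A_\ell = |f|^{-1}([f_{k_\ell},f_{k_{\ell-1}}))$ provably satisfy $2^{k_\ell-2}<\mu(A_\ell)\le 2^{k_\ell}$; this both forces lacunarity and makes the properly normalized $\alpha_\ell$ comparable to $2^{k_\ell/p}f_{k_{\ell-1}}$, so the dyadic Riemann-sum comparison goes through. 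You would need to incorporate some such merging of adjacent levels to complete the argument.

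A lesser issue in the forward direction: the claim that after reordering by height $|\alpha_k|\sigma_k^{-1/p}$ the $k$-th step of $f^*$ begins at $\sum_{k'<k}\sigma_{k'}\sim\sigma_k$ need not hold, since reordering by height can scramble the sizes (the preceding cumulative sum may be dominated by a large atom placed early). The paper avoids reordering altogether by observing that the union of the supports of the atoms of size $<2^{k_0}$ has measure $\le 2^{k_0}$, hence $f^*(2^{k_0}) \le \sup_{k\ge k_0}2^{-k/p}|\alpha_k|$, and then summing; you should adopt that bound rather than the step-function picture.
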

In the limiting case $L^{p, \infty}$, a similar decomposition exists, but only converges when $f$ is in $L^{p, \infty}_0$ --- the closure within $L^{p, \infty}$ of the set of bounded functions of finite-measure support.

Note that, for $1<p<\infty$, $L^{p', 1}$ is the dual of $L^{p, \infty}_0$ and $L^{p, \infty}$ is the dual of $L^{p', 1}$, but the dual of $L^{p, \infty}$ can be quite complicated.
\begin{proof}
In one direction, assume that $f \in L^{p, q}$; without loss of generality, we may consider $|f|$ in its stead. 


Let $f_k=f^*(2^k)$, for $f^*$ as in (\ref{A6}). Since the distribution function is decreasing, by (\ref{lorentz}) one has that
\be
\bigg(\sum_k (2^{(k+1)/p} f_k)^q \bigg)^{1/q} \geq \|f\|_{L^{p, q}} \geq \bigg(\sum_k (2^{k/p} f_{k+1})^q \bigg)^{1/q}
\ee
or, equivalently,
\be
2^{1/p} \bigg(\sum_k (2^{k/p} f_k)^q \bigg)^{1/q} \geq \|f\|_{L^{p, q}} \geq 2^{-1/p} \bigg(\sum_k (2^{k/p} f_k)^q \bigg)^{1/q}.
\ee
Thus, $\big(\sum_k (2^{k/p} f_k)^q \big)^{1/q}$ is comparable to $\|f\|_{L^{p, q}}$.

Let $B_k = |f|^{-1}([f_k, \infty))$. By definition, $\mu(B_k) \leq 2^k$ and $|f(x)| \geq f_k$ on $B_k$. Observe that if $\mu(B_k) > \mu(B_{k-1})$, then $\mu(B_k) > 2^{k-1}$.

For any $k \in \set Z$, let
\be
n(k)=\min\{n \mid \mu(B_n \setminus B_k) \geq 2^k\},\ m(k)=\max\{m \mid \mu(B_k \setminus B_m) \geq 2^{k-1}\},
\ee
and set $n(k) = +\infty$ or $m(k) = -\infty$ if the sets are empty. Observe that, for $k \leq \ell < n(k)$, $f_{\ell} = f_k$ and, for $k > \ell \geq m(k)$, $f_{\ell} = f_{k-1}$.

Define recursively the finite or infinite sequence $(k_{\ell})_{\ell \in \set Z}$ by
\be
k_0 = 1,\ k_{\ell+1} = n(k_{\ell})\ \text{for } \ell \geq 0,\ \text{and } k_{\ell-1} = m(k_{\ell})\ \text{for } \ell \leq 0.
\ee
Then $f_k = f_{k_{\ell}}$ whenever $k_{\ell} \leq k < k_{\ell+1}$. Since
\be
2^{-1/p} 2^{k_{\ell+1}/p} \leq \sum_{k=k_{\ell}}^{k_{\ell+1}-1} 2^{k/p} \leq \frac {2^{-1/p}}{1-2^{-1/p}} 2^{k_{\ell+1}/p},
\ee
it follows that $\big(\sum_{\ell} (2^{k_{\ell+1}/p} f_{k_{\ell}})^q \big)^{1/q}$ is comparable to $\|f\|_{L^{p, q}}$.

Let $A_{\ell} = |f|^{-1}([f_{k_{\ell}}, f_{k_{\ell-1}}))$. Note that $\mu(A_{\ell}) \geq 2^{k_{\ell-1}}$ by the definition of the $k_{\ell}$ sequence and also that $\mu(A_{\ell}) = \mu(B_{\ell} \setminus B_{\ell-1} > 2^{k_{\ell}-1}-2^{k_{\ell-1}}$. We infer that $\mu(A_{\ell}) > 2^{k_{\ell}-2}$. On the other hand, $\mu(A_{\ell}) \leq \mu(B_{\ell}) \leq 2^{k_{\ell}}$.

Set
\be\begin{aligned}
\alpha_{\ell} &= \mu(A_{\ell})^{1/p} \esssup\{|f(x)| \mid x \in A_{\ell}\}, \\
a_{\ell} &= (\chi_{A_{\ell}} f)/\alpha_{\ell}.
\end{aligned}\ee
Note that $a_{\ell}$ is an atom for each $\ell$ and
\be
2^{k_{\ell}/p} f_{k_{\ell-1}} > \alpha_{\ell} > 2^{(k_{\ell}-2)/p} f_{k_{\ell}}.
\ee
In particular,
\be
\sum_{\ell} \alpha_{\ell}^q < \sum_{\ell} 2^{k_{\ell}/p} f_{k_{\ell-1}} \leq C \|f\|_{L^{p, q}},
\ee
so $\|\alpha_k\|_{\ell^q} \leq C \|f\|_{L^{p, q}}$.

In order to establish the converse, consider
\be
f = \sum_{k \in \set Z} \alpha_k a_k,
\ee
where $a_k$ are $L^p$ atoms of disjoint supports of size $2^k$ and only finitely many of the coefficients $\alpha_k$ are nonzero. One needs to show that
\be\lb{A.17}
\|f\|_{L^{p, q}} \leq C \|\alpha_k\|_{\ell^q},
\ee
with a constant that does not depend on the number of terms.

Observe that each atom $a_k$ has $\esssup |a_k(x)| = 2^{-k/p}$. Since the measures of supports of atoms $a_k$, for $k < k_0$, add up to at most $2^{k_0}$, it follows by the definition of the distribution function that
\be
f^*(2^{k_0}) \leq \esssup_{k \geq k_0,\ x \in X} |\alpha_k a_k(x)| = \sup_{k \geq k_0}2^{-k/p} |\alpha_k|.
\ee
Then, the integral that appears in the definition (\ref{lorentz}) can be bounded by
\be\begin{aligned}
\int_0^{\infty} (t^{1/p} f^*(t))^q \frac {\dd t} t &\leq \sum_{k_0 \in \set Z} \big(2^{k_0/p} (\sup_{k \geq k_0}2^{-k/p} |\alpha_k|)\big)^q \\
&\leq \sum_{k \in \set Z} \big(\sum_{k_0 \leq k} 2^{k_0 q/p}\big) 2^{-kq/p} |\alpha_k|^q \\
&= \frac {2^{q/p}}{2^{q/p}-1} \sum_{k \in \set Z} |\alpha_k|^q.
\end{aligned}\ee
It follows that $\|f\|_{L^{p, q}}$ is indeed finite and fulfills (\ref{A.17}). The unconditional convergence of $\sum_k |\alpha_k|^q$ implies the unconditional convergence of $\sum \alpha_k a_k$.

This proof of the converse easily generalizes to the case when the atom sizes are only in the order of, not precisely equal to, $2^k$, as well as to the case when there are at most some fixed number of atoms of each dyadic size instead of just one (in particular, to the case of a lacunary sequence).
\end{proof}

A characterization of $L^{p, \infty}$ is given by (\ref{A7}) or, equivalently,
\be
L^{p, \infty} = \{f \mid \sup_t t^p \mu(\{x \mid |f(x)| > t\}) < \infty\}.
\ee
Next, we characterize $L^{p, \infty}_0$, the closure in $L^{p, \infty}$ of the set of bounded functions of finite support, in a similar manner.
\begin{proposition} \lb{prop_a3} For $1 \leq p < \infty$, $L^{p, \infty}_0 \subset L^{p, \infty}$ is characterized by
\be\lb{A.23}\begin{aligned}
L^{p, \infty}_0 = \{f \in L^{p, \infty} \mid \lim_{t \to \infty} t^p \mu(\{x \mid |f(x)| > t\}) = 0,\\
\lim_{t \to 0} t^p \mu(\{x \mid |f(x)| > t\}) = 0\}.
\end{aligned}\ee
\end{proposition}
\begin{proof} If $f \in L^p$, then
\be
\int_0^{\infty} t^p \dd \mu(\{x \mid |f(x)| > t\}) < \infty,
\ee
implying (\ref{A.23}). Since $L^p$ is dense in $L^{p, \infty}_0$, the same follows for $L^{p, \infty}_0$.

Conversely, (\ref{A.23}) implies that $f$ can be approximated by
\be
f_n(x) = f(x) \chi_{\{x \mid 1/n < |f(x)| < n\}}(x)
\ee
and $f_n$ are bounded and of compact support.
\end{proof}
%


\section{Real interpolation}

Our presentation of interpolation follows Bergh--L\"{o}f\-str\"{o}m, \cite{bergh}, and is included only for the sake of completeness. The reader is advised to consult this reference work for a much more detailed exposition.

For any couple of Banach spaces $(A_0, A_1)$, contained within a wider space $X$, their intersection, $A_0 \cap A_1$, and their sum
\be
A_0+A_1 = \{x \in X \mid x = a_0+a_1,\ a_0 \in A_0,\ a_1 \in A_1\}
\ee
give rise to two potentially new Banach spaces.

Given a couple of Banach spaces $(A_0, A_1)$ as above, define the so-called $K$ functional --- method due to Peetre --- on $A_0+A_1$ by
\be
K(t, a) = \inf_{a = a_0 + a_1} (\|a_0\|_{A_0} + t \|a_1\|_{A_1}).
\ee
\begin{definition}
For $0 \leq \theta \leq 1$, $1 \leq q \leq \infty$, the interpolation space $(A_0, A_1)_{\theta, q} = A_{\theta, q}$ is the set of elements $f \in A_0 + A_1$ whose norm
\be
\|f\|_{A_{\theta, q}} = \bigg(\int_0^{\infty} (t^{-\theta} K(t, f))^q \frac {dt} t \bigg)^{1/q}
\ee
is finite.
\end{definition}

$(A_0, A_1)_{\theta, q}$ is an exact interpolation space of exponent $\theta$ between $A_0$ and $A_1$,  meaning that it satisfies the following defining property:
\begin{theorem} Let $T$ be a bounded linear mapping between two pairs of Banach spaces $(A_0, A_1)$ and $(B_0, B_1)$, i.e.
\be
\|T f\|_{B_j} \leq M_j \|f\|_{A_j},\ j = 0, 1.
\ee
Then
\be
\|T f\|_{(B_0, B_1)_{\theta, q}} \leq M_0^{1-\theta} M_1^{\theta} \|f\|_{(A_0, A_1)_{\theta, q}}.
\ee
\end{theorem}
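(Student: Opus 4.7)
The plan is to prove that $T$ acts with the asserted norm on the interpolation spaces by establishing a direct comparison of $K$-functionals and then performing a change of variables in the defining integral. The essence is the submultiplicative transformation rule
\begin{equation*}
K(t, Tf;\, B_0, B_1) \leq M_0\, K(tM_1/M_0,\, f;\, A_0, A_1),
\end{equation*}
which immediately encodes the desired geometric mean $M_0^{1-\theta}M_1^{\theta}$ once one unwinds the $\theta$-weighted $L^q$ norm.

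First, I would verify this $K$-functional estimate directly from the definition. Given any decomposition $f = a_0 + a_1$ with $a_j \in A_j$, linearity yields $Tf = Ta_0 + Ta_1$ with $Ta_j \in B_j$ and $\|Ta_j\|_{B_j} \leq M_j \|a_j\|_{A_j}$. Hence
\begin{equation*}
\|Ta_0\|_{B_0} + t\|Ta_1\|_{B_1} \leq M_0\|a_0\|_{A_0} + tM_1\|a_1\|_{A_1} = M_0\bigl(\|a_0\|_{A_0} + (tM_1/M_0)\|a_1\|_{A_1}\bigr),
\end{equation*}
and taking the infimum over decompositions produces the claimed bound. (If $M_0$ or $M_1$ is zero the statement is trivial, so we may assume both are strictly positive.)

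Next, I would insert this bound into the definition of $\|Tf\|_{(B_0, B_1)_{\theta, q}}$ and change variables $s = tM_1/M_0$, noting that $dt/t = ds/s$ is scale-invariant. The computation gives
\begin{equation*}
\int_0^{\infty} \bigl(t^{-\theta} K(t, Tf)\bigr)^q \frac{dt}{t} \leq M_0^q \int_0^{\infty} \bigl(t^{-\theta} K(tM_1/M_0, f)\bigr)^q \frac{dt}{t} = M_0^{q(1-\theta)} M_1^{q\theta} \|f\|_{(A_0,A_1)_{\theta,q}}^q,
\end{equation*}
and taking $q$-th roots gives the result. The case $q = \infty$ is handled identically, replacing the integral by a supremum; in either case, the invariance of $dt/t$ under dilation is what produces the factor $M_0^{1-\theta}M_1^{\theta}$ cleanly.

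There is no real obstacle here: the argument is a two-line change-of-variables once the $K$-functional estimate is in hand, and the latter is an immediate consequence of the triangle inequality combined with the mapping bounds on the endpoints. The only small bookkeeping point is ensuring that $(A_0, A_1)$ and $(B_0, B_1)$ are each contained in an ambient Hausdorff topological vector space so that $T$ extends consistently from $A_0 + A_1$ to $B_0 + B_1$, but this is built into the definition of an interpolation couple.
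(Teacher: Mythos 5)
Your proof is correct and is precisely the standard argument: establish the scaling estimate $K(t, Tf; B_0, B_1) \leq M_0\, K(tM_1/M_0, f; A_0, A_1)$ by applying $T$ termwise to decompositions of $f$, then change variables $s = tM_1/M_0$ in the defining integral, using the dilation invariance of $dt/t$ to produce the factor $M_0^{1-\theta}M_1^{\theta}$. Note, however, that the paper itself does not prove this theorem: it appears in an appendix that the author explicitly states is included only for completeness, following Bergh and L\"of\-str\"om, and the reader is referred to that book for the proofs. So there is no in-paper argument to compare against; your write-up is essentially the textbook proof from the cited reference, and it is complete (including the correct handling of the trivial cases $M_j = 0$ and $q = \infty$).
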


For two couples of Banach spaces, $(A_0^{(1)}, A_1^{(1)})$ and $(A_0^{(2)}, A_1^{(2)})$,
\be
(A_0^{(1)} \times A_0^{(2)}, A_1^{(1)} \times A_1^{(2)})_{\theta, q} = (A_0^{(1)}, A_1^{(1)})_{\theta, q} \times (A_0^{(2)}, A_1^{(2)})_{\theta, q}.
\ee

The following multilinear interpolation theorem is due to Lions--Peetre:



\begin{theorem}\lb{thm_32} Assume that $T$ is a bilinear mapping from $(A_0^{(j)} \times A_1^{(j)})$ to $B^j$ for $j = 0, 1$ and
\be
\|T(a^{(1)}, a^{(2)})\|_{B_j} \leq M_j \|a^{(1)}\|_{A_j^{(1)}} \|a^{(2)}\|_{A_j^{(2)}}.
\ee
Then
\be
\|T(a^{(1)}, a^{(2)})\|_{(B_0, B_1)_{\theta, q}} \leq C \|a^{(1)}\|_{(A_0^{(1)}, A_1^{(1)})_{\theta, q_1}} \|a^{(2)}_j\|_{(A_0^{(2)}, A_1^{(2)})_{\theta, q_2}},
\ee
if $0<\theta<1$, $1/q-1=(1/q_1-1) + (1/q_2-1)$, $1 \leq q \leq \infty$.
\end{theorem}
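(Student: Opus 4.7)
My plan is to reduce the bilinear interpolation estimate to Young's convolution inequality on $\ell^q(\set Z)$ via the discrete $J$-functional representation of the interpolation spaces, an approach that dovetails perfectly with the arithmetic condition $1/q+1 = 1/q_1 + 1/q_2$. Recall that for a compatible couple $(A_0, A_1)$ with $a \in A_0 \cap A_1$, the $J$-functional is $J(t, a; A_0, A_1) = \max(\|a\|_{A_0}, t \|a\|_{A_1})$, and that $(A_0, A_1)_{\theta, q}$ is equivalently normed by the infimum of $\bigl(\sum_{n \in \set Z} (2^{-n\theta} J(2^n, u_n))^q\bigr)^{1/q}$ taken over all representations $a = \sum_n u_n$ with $u_n \in A_0 \cap A_1$. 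This equivalence with the $K$-method is the standard fundamental lemma (Bergh--L\"ofstr\"om, Ch.~3).

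First, I would fix optimal $J$-representations
\[
a^{(1)} = \sum_{n \in \set Z} u_n^{(1)}, \qquad a^{(2)} = \sum_{m \in \set Z} u_m^{(2)},
\]
with $u_n^{(1)} \in A_0^{(1)} \cap A_1^{(1)}$ and $u_m^{(2)} \in A_0^{(2)} \cap A_1^{(2)}$, satisfying
\[
\alpha_n := 2^{-n\theta} J(2^n, u_n^{(1)}), \quad \beta_m := 2^{-m\theta} J(2^m, u_m^{(2)}),
\]
with $\|\alpha\|_{\ell^{q_1}} \lesssim \|a^{(1)}\|_{(A_0^{(1)}, A_1^{(1)})_{\theta, q_1}}$ and $\|\beta\|_{\ell^{q_2}} \lesssim \|a^{(2)}\|_{(A_0^{(2)}, A_1^{(2)})_{\theta, q_2}}$. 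Because each $u_n^{(j)}$ lies in $A_0^{(j)} \cap A_1^{(j)}$, both hypotheses on $T$ apply to $T(u_n^{(1)}, u_m^{(2)})$, yielding
\[
\|T(u_n^{(1)}, u_m^{(2)})\|_{B_0} \leq M_0 \|u_n^{(1)}\|_{A_0^{(1)}} \|u_m^{(2)}\|_{A_0^{(2)}}, \qquad \|T(u_n^{(1)}, u_m^{(2)})\|_{B_1} \leq M_1 \|u_n^{(1)}\|_{A_1^{(1)}} \|u_m^{(2)}\|_{A_1^{(2)}}.
\]
Evaluating $J(2^{n+m}, \cdot; B_0, B_1)$ on this piece and using $\|u_n^{(j)}\|_{A_0^{(j)}} \leq J(2^n, u_n^{(j)})$ together with $2^n \|u_n^{(j)}\|_{A_1^{(j)}} \leq J(2^n, u_n^{(j)})$ gives, with $M = \max(M_0, M_1)$,
\[
J\bigl(2^{n+m}, T(u_n^{(1)}, u_m^{(2)}); B_0, B_1\bigr) \leq M \, J(2^n, u_n^{(1)}) \, J(2^m, u_m^{(2)}).
\]

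Next, I would group by $k = n+m$, defining $v_k = \sum_{n+m=k} T(u_n^{(1)}, u_m^{(2)})$, so that $T(a^{(1)}, a^{(2)}) = \sum_k v_k$ is itself a $J$-representation. Subadditivity of $J$ and the previous bound give
\[
2^{-k\theta} J(2^k, v_k; B_0, B_1) \leq M \sum_{n+m=k} \alpha_n \beta_m = M \, (\alpha * \beta)_k.
\]
The conclusion now follows from Young's convolution inequality on $\ell^q(\set Z)$: under the hypothesis $1/q + 1 = 1/q_1 + 1/q_2$ one has $\|\alpha * \beta\|_{\ell^q} \leq \|\alpha\|_{\ell^{q_1}} \|\beta\|_{\ell^{q_2}}$, so by the $J$-method characterization,
\[
\|T(a^{(1)}, a^{(2)})\|_{(B_0, B_1)_{\theta, q}} \lesssim M \, \|\alpha\|_{\ell^{q_1}} \, \|\beta\|_{\ell^{q_2}} \lesssim \|a^{(1)}\|_{(A_0^{(1)}, A_1^{(1)})_{\theta, q_1}} \, \|a^{(2)}\|_{(A_0^{(2)}, A_1^{(2)})_{\theta, q_2}}.
\]

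The only real obstacle is the bookkeeping with the $J$-functional: one must verify that $T(u_n^{(1)}, u_m^{(2)})$ actually lies in $B_0 \cap B_1$ (so that its $J$-functional is finite), which is guaranteed by the fact that each $u_n^{(j)}$ belongs to the intersection $A_0^{(j)} \cap A_1^{(j)}$, and that the double sum $\sum_{n,m} T(u_n^{(1)}, u_m^{(2)})$ converges unconditionally in $B_0 + B_1$, which follows from the $\ell^q$-summability just established. Once these points are settled, the estimate is essentially a direct translation of Young's inequality via the $K$-$J$ equivalence, with the condition on $q$ appearing precisely because convolution indices must match those of Young's inequality.
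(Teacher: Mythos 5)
The paper states this as the Lions--Peetre bilinear interpolation theorem and does not prove it; it simply refers the reader to Bergh--L\"ofstr\"om for the proof. Your argument --- pass to discrete $J$-method representations of $a^{(1)}$ and $a^{(2)}$, observe that $J\bigl(2^{n+m}, T(u_n^{(1)}, u_m^{(2)}); B_0, B_1\bigr) \leq M\, J(2^n, u_n^{(1)})\, J(2^m, u_m^{(2)})$, regroup over $k = n+m$, and apply Young's convolution inequality on $\ell^q(\set Z)$ with the index relation $1/q + 1 = 1/q_1 + 1/q_2$ --- is precisely the standard proof in that reference, and the computations you carry out are correct. The reliance on the $K$-$J$ equivalence theorem (valid for $0 < \theta < 1$, $1 \leq q \leq \infty$) is also appropriate. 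The one point you rightly flag but do not fully close is the identification $T(a^{(1)}, a^{(2)}) = \sum_{n,m} T(u_n^{(1)}, u_m^{(2)})$: the $\ell^q$ estimate shows the double sum defines \emph{some} element of $(B_0, B_1)_{\theta, q}$, but equating it with $T(a^{(1)}, a^{(2)})$ requires using the boundedness and compatibility of $T$ on the two endpoint couples to pass to the limit in $B_0 + B_1$ (for instance, truncate the $J$-representations to finite sums, use bilinearity and boundedness on the endpoint spaces to control $T$ of the tails in $B_0 + B_1$, then let the truncations exhaust). This is a routine but genuine step, and noting it is exactly the right instinct; with it filled in, the proof is complete and matches the source.
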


Another bilinear real interpolation theorem is the following (see \cite{bergh}, Section 3.13.5(b)):
\begin{theorem}\lb{int_keta}
Let $A_0$, $A_1$, $B_0$, $B_1$, $C_0$, and $C_1$ be Banach spaces, 
and assume that the bilinear operator $T$ is bounded as follows:
\be
T : A_0 \times B_0 \mapsto C_0,\  T : A_0 \times B_1 \mapsto C_1,\ T : A_1 \times B_0 \mapsto C_1.
\ee 
If $0 < \theta_0, \theta_1 < \theta < 1$, $1 \leq a, b, r \leq \infty$ and $1 \leq \frac 1 a + \frac 1 b$, $\theta = \theta_0 + \theta_1$, then
\be
T : (A_0, A_1)_{\theta_1, ar} \times (B_0, B_1)_{\theta_2, br} \to (C_0, C_1)_{\theta, r}.
\ee
\end{theorem}

Below we list the results of real interpolation in some standard situations:
\begin{proposition}\label{prop_33} Let $L^p$ be the Lebesgue spaces defined over a measure space $(X, \mu)$ and $L^{p, q}$ be Lorentz spaces over the same. Then
\begin{enumerate}
\item[1.] $(L^{p_0, q_0}, L^{p_1, q_1})_{\theta, q} = L^{p, q}$, for $p_0$, $p_1$, $q_0$, $q_1 \in (0, \infty]$, $p_0 \ne p_1$, $1/p = (1-\theta)/p_0 + \theta/p_1$, $0<\theta<1$.
\item[2.] For a Banach space $A$, define the weighted spaces of sequences (homogenous and inhomogenous, respectively)
\be\begin{aligned}
\dot \ell^q_s(A) &= \bigg\{(a_n)_{n \in \set Z} \mid \|(a_n)\|_{\ell^q_s(A)} = \bigg(\sum_{n=-\infty}^{\infty} (2^{ns} \|a_n\|_A)^q\bigg)^{1/q} < \infty\bigg\}\\
\ell^q_s(A) &= \bigg\{(a_n)_{n \geq 0} \mid \|(a_n)\|_{\ell^q_s(A)} = \bigg(\sum_{n=0}^{\infty} (2^{ns} \|a_n\|_A)^q\bigg)^{1/q} < \infty\bigg\}
\end{aligned}\ee
Then
\be\begin{aligned}
(\dot \ell_{s_0}^{q_0}(A_0), \dot \ell_{s_1}^{q_1}(A_1))_{\theta, q} &= \dot \ell_s^q((A_0, A_1)_{\theta, q}), \\
(\ell_{s_0}^{q_0}(A_0), \ell_{s_1}^{q_1}(A_1))_{\theta, q} &= \ell_s^q((A_0, A_1)_{\theta, q}),
\end{aligned}\ee
where $0<q_0$, $q_1<\infty$, $s = (1-\theta) s_0 + \theta s_1$, $1/q = (1-\theta)/q_0 + \theta/q_1$, $0<\theta<1$.
\item[3.] $(L^{p_0}(A_0), L^{p_1}(A_1))_{\theta, p} = L^p((A_0, A_1)_{\theta, p})$, for $1/p = (1-\theta)/p_0 + \theta/p_1$.
\end{enumerate}\end{proposition}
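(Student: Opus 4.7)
My plan is to establish each of the three identities by computing the $K$-functional explicitly on the pair $(A_0,A_1)$ in question and then showing that the resulting weighted $L^q(\tfrac{dt}{t})$ norm of $t^{-\theta} K(t,\cdot)$ coincides with the claimed target norm, up to constants depending only on $\theta$, $p_i$, $q_i$. The common framework is that each left-hand couple has an almost-optimal decomposition that can be written down in closed form in terms of a monotone rearrangement (for functions) or a truncation (for sequences), reducing the whole matter to Hardy-type inequalities on $(0,\infty)$ with the measure $dt/t$.

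For statement (1), I would first prove Holmstedt's formula: for $p_0\neq p_1$, $\alpha=|1/p_0-1/p_1|$, and any $f\in L^{p_0,q_0}+L^{p_1,q_1}$,
\begin{equation*}
K(t,f;L^{p_0,q_0},L^{p_1,q_1})\;\sim\;\Bigl(\int_0^{t^{1/\alpha}}\!(s^{1/p_0}f^*(s))^{q_0}\tfrac{ds}{s}\Bigr)^{1/q_0}+t\Bigl(\int_{t^{1/\alpha}}^{\infty}\!(s^{1/p_1}f^*(s))^{q_1}\tfrac{ds}{s}\Bigr)^{1/q_1}.
\end{equation*}
The upper bound comes from the concrete splitting $f=f\chi_{\{|f|>f^*(t^{1/\alpha})\}}+f\chi_{\{|f|\le f^*(t^{1/\alpha})\}}$, whose two pieces lie in $L^{p_0,q_0}$ and $L^{p_1,q_1}$ with norms controlled by the two integrals above. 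The matching lower bound is the more delicate half and uses that among all admissible decompositions one may restrict to those respecting the level sets of $f$, via the Calderón–Mityagin rearrangement principle. Plugging the formula into $\|f\|_{\theta,q}^{q}=\int_0^\infty(t^{-\theta}K(t,f))^q\tfrac{dt}{t}$, performing the change of variable $u=t^{1/\alpha}$, and applying the Hardy inequalities
\begin{equation*}
\Bigl\|u^{-\beta}\!\int_0^u g(s)\tfrac{ds}{s}\Bigr\|_{L^q(du/u)}\le C\|u^{-\beta}g(u)\|_{L^q(du/u)},\qquad \beta>0,
\end{equation*}
and its dual, collapses the two terms to $\int_0^\infty(u^{1/p}f^*(u))^q\tfrac{du}{u}=\|f\|_{L^{p,q}}^q$, with $1/p=(1-\theta)/p_0+\theta/p_1$.

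For statement (2) the computation is easier. For $a=(a_n)\in\dot\ell^{q_0}_{s_0}(A_0)+\dot\ell^{q_1}_{s_1}(A_1)$ one checks directly that the coordinatewise optimal splitting realizes the $K$-functional on each coordinate, namely
\begin{equation*}
K(t,a;\dot\ell^{q_0}_{s_0}(A_0),\dot\ell^{q_1}_{s_1}(A_1))\;\sim\;\bigl\|\,2^{ns_0}K(t\cdot 2^{n(s_1-s_0)},a_n;A_0,A_1)\bigr\|_{\ell^{q_0}_n}+\text{dual term},
\end{equation*}
and after the rescaling $t\mapsto 2^{n(s_0-s_1)}t$ in the outer integral and a swap of the order of summation and integration (Fubini on $\ell^q\otimes L^q(dt/t)$ when $q_0=q_1=q$, Minkowski otherwise), the norm reduces to $\sum_n 2^{nsq}\|a_n\|_{(A_0,A_1)_{\theta,q}}^q$ as asserted. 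The inhomogeneous case is identical after restricting indices to $n\ge0$.

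Statement (3) will follow from (2) together with a discretization argument: write $f\in L^{p_0}(A_0)+L^{p_1}(A_1)$ as $\sum_n f_n$ where $f_n$ is the restriction of $f$ to the level set $\{2^n\le \|f(x)\|<2^{n+1}\}$ (or rather to dyadic slabs on which $\|f(x)\|_{A_0+A_1}$ is comparable to $2^n$), identify the resulting sequence with an element of $\ell^{p_j}_{s_j}(L^{\infty}(A_j))$-type space with weights reflecting $p_0,p_1$, and apply (2) to interpolate. Equivalently, and what I would actually write out, one invokes the classical identification $K(t,f;L^{p_0}(A_0),L^{p_1}(A_1))^{p}\sim\int_X K(\mu(x)\,t,f(x);A_0,A_1)^p\,d\mu(x)$ available when $p_0=p_1=p$ (Fubini on the scalar case), and uses the standard truncation argument of Bergh–Löfström for $p_0\neq p_1$; in the quoted identity only the case $q=p$ with $1/p=(1-\theta)/p_0+\theta/p_1$ is needed, so Minkowski and Hardy suffice.

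The main obstacle is Holmstedt's formula in (1): the upper bound is a transparent truncation, but the reverse inequality requires showing that one cannot do essentially better than the rearrangement-respecting decomposition, which is the one genuine theorem hiding behind these otherwise formal manipulations. Everything else — (2), (3), the reduction of the $K$-norm to $L^{p,q}$ after Holmstedt — is Hardy-inequality bookkeeping on $(0,\infty,dt/t)$.
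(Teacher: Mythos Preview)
The paper does not actually prove this proposition: it is stated in the appendix purely for reference, and immediately afterwards the reader is directed to Bergh--L\"ofstr\"om \cite{bergh} for the proofs. Your sketch via Holmstedt's formula, coordinatewise $K$-functional estimates for weighted sequence spaces, and Hardy inequalities on $(0,\infty,dt/t)$ is precisely the standard route taken in that reference, so there is nothing to compare.
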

Again, the reader is referred to \cite{bergh} for the proofs and for more details.

Finally, real interpolation yields a short proof of sharpened Young's and H\"{o}lder's inequalities that we use throughout the paper:
\begin{proposition}\lb{prop_holder}
Assume that $f \in L^{p_1, q_1}$ and $g \in L^{p_2, q_2}$, $1\leq p_1, q_1, p_2, q_2 \leq \infty$. If $\frac 1 p = \frac 1 {p_1} + \frac 1 {p_2}$, $\frac 1 q = \frac 1 {q_1} + \frac 1 {q_2}$, $1<p_1, p_2<\infty$, then $f\, g \in L^{p, q}$.

If $\frac 1 {\tilde p} = \frac 1 {p_1} + \frac 1 {p_2} - 1$ and $1 <p_1, p_2, p < \infty$, then $f * g \in L^{\tilde p, q}$.
\end{proposition}
\begin{proof}
Interpolate by Theorem \ref{int_keta} between
\be
\|f g\|_{L^{\infty}} \leq \|f\|_{L^{\infty}} \|g\|_{L^{\infty}},\ \|f g\|_{L^1} \leq \|f\|_{L^{\infty}} \|g\|_{L^1},\ \text{and } \|f g\|_{L^1} \leq \|f\|_{L^1} \|g\|_{L^{\infty}},
\ee
with interpolation exponents $\theta_0=\frac 1 {p_1}$, $\theta_1=\frac 1 {p_2}$, $\theta=\frac 1 p$, $ar = q_1$, $br = q_2$, and $r=q$. We obtain that $f\, g \in L^{p, q}$.

Concerning Young's inequality, interpolate by Theorem \ref{int_keta} between
\be
\|f * g\|_{L^1} \leq \|f\|_{L^1} \|g\|_{L^1},\ \|f * g\|_{L^{\infty}} \leq \|f\|_{L^{\infty}} \|g\|_{L^1},\ \text{and } \|f * g\|_{L^{\infty}} \leq \|f\|_{L^1} \|g\|_{L^{\infty}},
\ee
with interpolation exponents $\theta_0 = 1 - \frac 1 {p_1}$, $\theta_1 = 1 - \frac 1 {p_2}$, $\theta = 2 - \frac 1 {p_1} - \frac 1 {p_2} = 1 - \frac 1 {\tilde p}$, $ar = q_1$, $br = q_2$, and $r=q$. We obtain exactly that $f * g \in L^{\tilde p, q}$.
\end{proof}


\end{document}